\documentclass{amsart} 

\usepackage[active]{srcltx}
\usepackage{bm}


\newtheorem{definition}{Definition}[section]

\newtheorem{remark}[definition]{Remark}
\newtheorem{example}[definition]{Example}

\newtheorem{lemma}[definition]{Lemma}
\newtheorem{proposition}[definition]{Proposition}
\newtheorem{theorem}[definition]{Theorem}
\newtheorem{corollary}[definition]{Corollary}

\newtheorem{algorithm}[definition]{Algorithm}

\def\P{{\mathbb{P}}}
\def\V{{\mathbb{V}}}

\def\K{{\mathbb{K}}}

\def\N{{\mathbb{N}}}

\def\cB{{\mathcal{B}}}

\def\bfB{{\mathbf{B}}}
\def\bfn{{\mathbf{w}}}
\def\bfn{{\mathbf{n}}}
\def\bfU{{\mathbf{U}}}
\def\bfV{{\mathbf{V}}}

\def\bfu{{\mathbf{u}}}
\def\bfv{{\mathbf{v}}}

\def\cU{{\mathcal{U}}}

\def\cF{{\mathcal{F}}}

\def\Z{{\mathbb{Z}}}

\def\M{{\mathbb{M}}}


\begin{document}
\title[Unattainable points for  Rational Hermite Interpolation]{The Set of Unattainable points for the Rational Hermite Interpolation  Problem}

\author{Teresa Cortadellas Ben\'itez}
\address{Universitat de Barcelona, Facultat de Educaci\'o.
Passeig de la Vall d'Hebron 171,
08035 Barcelona, Spain}
\email{terecortadellas@ub.edu}

\author{Carlos D'Andrea}
\address{Universitat de Barcelona, Departament de Matem\`atiques i Inform\`atica,
 Universitat de Barcelona (UB),
 Gran Via de les Corts Catalanes 585,
 08007 Barcelona,
 Spain} \email{cdandrea@ub.edu}
\urladdr{http://www.ub.edu/arcades/cdandrea.html}

\author{Eul\`alia Montoro}
\address{Universitat de Barcelona, Departament de Matem\`atiques i Inform\`atica,
 Universitat de Barcelona (UB),
 Gran Via de les Corts Catalanes 585,
 08007 Barcelona,
 Spain}
\email{eula.montoro@ub.edu}

\thanks{Cortadellas and D'Andrea  are supported by the Spanish MEC research project MTM2013-40775-P, Montoro by the Spanish MINECO/FEDER research project MTM 2015-65361-P }
\date{\today}

\subjclass[2010]{Primary 14Q15  ; Secondary 13P05, 68W30}

\keywords{Rational Hermite Interpolation, Unattainable Points, Equidimensional Varieties, Rational Varieties, Complete Intersections, Structured Matrices}

\begin{abstract}
We describe geometrically and algebraically  the set of unattainable points for the Rational Hermite Interpolation Problem (i.e. those points where the problem does not have a solution). We show that this set is a union of equidimensional complete intersection varieties of odd codimension, the number of them being equal to the minimum between the degrees of the numerator and denominator of the problem. Each of these equidimensional varieties can be further decomposed as a union of as many rational (irreducible) varieties as input data points. We exhibit algorithms and equations defining all these objects.
\end{abstract}
\maketitle

\section{Introduction}
Let $\K$ be a field, $k,\,l,\, n_1,\ldots, n_l\in\Z_{>0}$ with $k\leq n:= n_{1}+\ldots+n_{l}.$ For  $u_1,\ldots, u_l\in \K$ with $u_i\neq u_j$ if
$i\neq j,$ and $v_{i,j}\in\K$ with $i=1,\ldots, l,\,j=0,\ldots, n_i-1,$  The  Rational Hermite Interpolation Problem (RHIP) associated with this data as stated in \cite{sal62, kah69, sal84,SW91}, is the
following:  decide if there exist -and if so compute- polynomials
$A(x),\,B(x)\in\K[x]$ of degrees bounded by $k-1$ and $n-k$ respectively such that
$B(u_i)\neq0$ for all $i=1,\ldots, l,$ and
\begin{equation}\label{hermite}
\left(\frac{A}{B}\right)^{(j)}(u_{i})=j!v_{i,j},\,i=1,\ldots, l,\,j=0,\ldots,n_{i}-1.
\end{equation}
The factorial in the equation above is introduced to simplify some of the formulas that will appear later.  For \eqref{hermite} to be as general as possible,  we need to impose Char$(\K)=0$ or Char$(\K)\geq\max\{n_1,\ldots, n_l\}.$  When $l=n$ (i.e. $n_1=\ldots=n_l=1$), the RHIP coincides with the classical Rational Interpolation Problem \cite{cau41, wuy75, SW86}.
If  $k=n,$ the RHIP descends to the well-known Hermite Interpolation Problem. But in contrast with it, there is not always a solution for the RHIP for any input data. For instance, if we set $k=2,\,l=2,\,n_{1}=2,\,n_{2}=1,\,u_1=1,\, u_2=2,\,v_{1,0}=1,\,v_{1,1}=0, v_{2,0}=0,$ one can check
straightforwardly that there is no solution for  \eqref{hermite}, see Example \ref{beow} below.

The standard approach to this problem from both an algorithmic and complexity point of view  is via the Extended Euclidean Algorithm as it is explained in \cite{vzGJ13} (see also \cite{ant88}, and \S \ref{EAR} in this text). There are also alternative approaches by using structured matrices (\cite{VBB92, BL00}), barycentric coordinates (\cite{SW86, SW91}), orthogonal polynomials (\cite{EK89,gem93}), and computation of syzygies (\cite{rav97}). Barycentric coordinates seem to be stable when working with approximate data, but not very fast, while the use of orthogonal polynomials is efficient thanks to the use of Jacobi's method for inverting matrices, but their results  are limited to the rational interpolation problem (without multiplicities) only. Parametric representations of the solutions in general situations can be found in \cite{las03, DKS15}.

In all the previous results, optimal bounds of complexity are achieved, and parametric expressions for $A(x)$ and $B(x)$ are given when they exist, but an explicit description of the set of the so called 
unattainable points for the RHIP, i.e. the set of data $\{u_i, v_{i,j}\}$ such that the RHIP does not have solutions, cannot be obtained straightforwardly from these approachs. The purpose of this paper 
is to  characterize them both geometrically and algebraically. Our main result, given in Theorems \ref{mtt} and \ref{mts} states that the set of ill-posed point is a union of $\min\{k-1,n-k\}$ equidimensional complete intersection varieties of odd codimension. Moreover, each of these l varieties can be further decomposed as a union of $l$  rational (irreducible) varieties. As a by-product, we  will produce explicit expressions for the solution for this problem valid in different regions of the space of parameters, and alternative
algorithms based only in elementary Linear Algebra,  without the need of applying neither barycentric coordinates, nor the Euclidean Division Algorithm. The complexity of solving these problems as well as the extension of  the methods in \cite{EK89} to the RHIP will be the subject of  a future paper.

To deal with the input data properly, we set ${\bf n}=(n_{1},\ldots,n_{l}),\, \bfu=(u_1,\ldots, u_l),\,\bfv_i=(v_{i,0},\ldots, v_{i,n_i-1}), \,1\leq i\leq l,$ and $\bfv=(\bfv_1,\ldots,\bfv_l).$  For 
$j,\,t\in\N,$ we define the $t$-th Pochhammer symbol as follows: $(j)_0=1,\,(j)_t=j\cdot(j-1)\dots (j-t+1).$
The Weak Hermite Interpolation Problem (WHIP), asks to compute polynomials $A_{\bfu,\bfv,\bfn,k-1}(x),\,B_{\bfu,\bfv,\bfn,n-k}(x)\in\K[x]$ of degrees bounded by $k-1$ and $n-k$ respectively
such that
\begin{equation}\label{whermite}
A_{\bfu,\bfv,\bfn,k-1}^{(j)}(u_i)=\sum_{t=0}^{j}(j)_t v_{i,t}\,B^{(j-t)}_{\bfu,\bfv,\bfn,n-k}(u_i),\,i=1,\ldots, l,\,j=0,\ldots,n_{i}-1.
\end{equation}
It is easy to verify that any solution of \eqref{hermite} also satisfies \eqref{whermite}, but not the other way around as $B_{\bfu,\bfv,\bfn,n-k}(x)$ may vanish at some of the $u_i$'s; that is,
if $B(u_{i})\neq 0$ for $i=1,\ldots,l$, then WHIP and RHIP are equivalents. In contrast, given the input
data $(\bfu,\,\bfv)$ as before,  \eqref{whermite} leads to a homogeneous linear system of equations in the coefficients of the
polynomials $A_{\bfu,\bfv,\bfn,k-1}(x),\,B_{\bfu,\bfv,\bfn,n-k}(x)$ of $n$ equations with $n+1$ unknowns, so there is always a non trivial
solution of it.
Indeed, let $\M_{\bfu,\bfv,\bfn,k}\in\K^{n\times(n+1)}$ be the matrix of the linear system \eqref{whermite}. Computing it
explicitly, we have
\begin{equation}\label{symbM}
 \M_{\bfu,\bfv,\bfn,k}=\left(
\begin{array}{c} \M_{u_{1},\bfv_{1},k,n}\\
 \vdots\\
 \M_{u_{l},\bfv_{l},k,n}\end{array}\right),
\end{equation}
where

\begin{equation}
 \M_{u_{i},\bfv_{i},k,n}=\left(\begin{array}{ccc}
  M L_{u_{i},\bfv_{i},k,n}&|&M R_{u_{i},\bfv_{i},k,n}
 \end{array}\right)\in\K^{n_{i}\times(n+1)}
\end{equation}
 and 

\begin{equation}\label{symbML}
M L_{u_{i},\bfv_{i},k,n}=\left(
\begin{array}{ccccc}
1&u_i&u_i^2&\ldots &u_i^{k-1}\\
0&1&2u_i&\ldots &{k-1\choose 1}u_i^{k-2}\\
\vdots&\vdots&\vdots&\ldots&\vdots\\
0&0&0&\ldots &{k-1\choose n_{i}-1}u_i^{k-n_{i}}
\end{array}
\right),
\end{equation}

\begin{equation}\label{symbMR}
M R_{u_{i},\bfv_{i},k,n}=\left(
  \begin{array}{cccc}
-v_{i,0}&-v_{i,0}u_i&\ldots&-v_{i,0}u_i^{n-k}\\
-v_{i,1}&-(v_{i,1}u_{i}+v_{i,0})&\ldots&-\sum_{t=0}^{1}{n-k \choose t}v_{i,1-t}u_{i}^{n-k-t}\\
\vdots&\vdots&\ldots&\vdots\\
-v_{i,n_{i}-1}&-(v_{i,n_{i}-1}u_{i}+v_{i,n_{i}-2})&\ldots&-\sum_{t=0}^{n_{i}-1}{n-k \choose t}v_{i,n_{i}-1-t}u_{i}^{n-k-t}
\end{array}
\right),
\end{equation}
with ${k\choose j}=0$ if $j>k.$ Note that the coefficients of a solution of the WHIP, sorted properly, are elements of the kernel of \eqref{symbM}. We will see in
Proposition \ref{uni} that all the nontrivial pairs $A_{\bfu,\bfv,\bfn,k-1}(x),\,B_{\bfu,\bfv,\bfn,n-k}(x)\in\K[x]$ solving the WHIP,
up to a constant,
produce the same fraction $\frac{A_{\bfu,\bfv,\bfn,k-1}(x)}{B_{\bfu,\bfv,\bfn,n-k}(x)}\in\K(x),$ which will be the solution of the RHIP if
the ``reduced'' fraction is also a solution of the WHIP (Theorem \ref{hipp}). So, finding non trivial solutions of the WHIP ``almost''
solves the RHIP. And it is of interest of course to get the ``minimal'' solution (the one with minimal degree) of this problem, which it is known (see Proposition \ref{2.5}) to be unique up to a constant.
\par
If $\M_{\bfu,\bfv,\bfn,k}$ has maximal rank, then the solution to the WHIP is given by the maximal minors of this matrix.  Otherwise,
some non trivial Linear Algebra must be performed in order to find a non-zero vector in the kernel of this matrix. Our aim is to give an
explicit algebraic formulation in terms of the input data $(\bfu,\bfv)$ which will allow us to produce solutions of the WHIP in all the
cases as functions of minors of suitable matrices. This approach will also give us a full description of the unattainable points for the RHIP.
To do this, we need to consider matrices like \eqref{symbM}, but in a more general setting:  denote with
$U_1,\ldots, U_l,\,V_{1,0},\ldots, V_{1,n_{1}-1},\ldots,V_{l,0},\ldots,V_{l,n_{l}-1}$
a set of $l+n$ indeterminates over $\K,$ and set $\bfU=(U_1,\ldots, U_l),\, \bfV=(\bfV_1,\ldots, \bfV_l).$
For $\alpha,\,\beta,\in\Z_{\geq0},$ and ${\bfn}:=(n_{1},\ldots, n_{l})\in\Z^{l}_{\geq0},$
we set
\begin{equation}\label{symbMM}
\M_{\alpha,\beta,\bf{n}}(\bfU,\bfV)=\left(
\begin{array}{c}
\M_{\alpha,\beta,n_{1}}(U_{1},\bfV_{1})\\
\vdots\\
\M_{\alpha,\beta,n_{l}}(U_{l},\bfV_{l})
\end{array}
\right),
\end{equation}
where for $i=1,\ldots, l,\,
\M_{\alpha,\beta,n_{i}}(U_{i},\bfV_{i})$
stands for
{\tiny
\begin{equation}
=\left(
\begin{array}{ccccccccc}
1&U_i&U_i^2&\ldots &U_i^{\alpha}&-V_{i,0}&-V_{i,0}U_i&\ldots&-V_{i,0}U_i^{\beta}\\
0&1&2U_i&\ldots &{\alpha\choose 1} U_i^{\alpha-1}&-V_{i,1}&-(V_{i,1}U_{i}+V_{i,0})&\ldots&-\sum_{t=0}^{1}{\beta \choose t}V_{i,1-t}U_{i}^{\beta-t}\\
\vdots&\vdots&\vdots&\ldots&\vdots&\vdots&\vdots&\ldots&\vdots\\
0&0&0&\ldots &{\alpha \choose n_{i}-1}U_i^{\alpha-(n_{i}-1)}&-V_{i,n_{i}-1}&-(V_{i,n_{i}-1}U_{i}+V_{i,n_{i}-2})&\ldots
&-\sum_{t=0}^{n_{i}-1}{\beta \choose t}V_{i,n_{i}-1-t}U_{i}^{\beta-t}
\end{array}
\right).
\end{equation}}
For $(\bfu,\,\bfv)\in\K^l\times\K^n,$ note that $\M_{\bfu, \bfv,\bfn, k}=\M_{k-1,n-k,\bfn}(\bfu,\bfv).$
In the case $\alpha+\beta=n-1,$  for $i=1,\ldots, n+1,$ we set
$\Delta^{\bf n}_{\alpha+1,i}$ to be the signed $i$-th maximal minor of $\M_{\alpha,\beta,\bf{n}}(\bfU,\bfV).$
Let $Z\subset\K^l$ be the algebraic set defined by
$$Z=\{(u_1,\ldots, u_l)\in\K^l:\,\prod_{1\leq i<j\leq n}(u_i-u_j)=0\}.$$ Our input $\bfu$ will be an element of $\K^l\setminus Z.$
So, the initial data can be taken from  $\left(\K^l\setminus Z\right)\times\K^n $.
 Our first main result  is the following:
\begin{theorem}\label{mtt}
Let $k, n_1,\ldots, n_l\in\N,\,1\leq k\leq n_1+\ldots+n_l=:n,$ and $\K$ a field with Char$(\K)=0$ or Char$(\K)\geq\max\{n_1,\ldots, n_l\}.$
Set $m:=\min\{k-1,n-k\}.$
The set of unattainable points for the RHIP is a disjoint union $\cB_1\sqcup\cB_3\sqcup\ldots\sqcup \cB_{2m-1}$, where, for $j=1,\ldots, m,\,\cB_{2j-1}$  is
the union of the following $2l$ constructible sets in $(\K^l\setminus Z)\times \K ^n$ with the Zariski topology:
{\small \begin{equation}\label{j}
\begin{array}{c}
\Delta^{\bfn}_{k-j+2,k-j+2}= \dots = \Delta^{\bfn}_{k+j-1,k+j-1} =\sum_{\ell=k-j+1}^{n-2j+2}\Delta^{\bfn}_{k-j+1,\ell+1}U_i^{\ell-k+j-1}= 0,\, \Delta^{\bfn}_{k-j+1,k-j+1}\neq0 \\
\mbox{and} \\
\Delta^{\bfn}_{k-j+2,k-j+2}= \dots = \Delta^{\bfn}_{k+j-1,k+j-1} =\sum_{\ell=k+j-1}^{n}\Delta^{\bfn}_{k+j-1,\ell+1}\,U_i^{\ell-k-j+1}=0,\, \Delta^{\bfn}_{k+j,k+j}\neq0,
\end{array}
\end{equation}}
for $1\leq i\leq l$ (if $j=1$ there is only one equation, and two inequalities above). The two sets in \eqref{j} coincide
in $\{ \Delta^{\bfn}_{k-j+1,k-j+1}\neq0\}\cap\{ \Delta^{\bfn}_{k+j,k+j}\neq0\}.$
If $\K$ is algebraically closed, $\cB_{2j-1}$ is the union of $l$  rational irreducible  varieties  of  codimension $2j-1$ in $(\K^l\setminus Z)\times\K ^n.$
\end{theorem}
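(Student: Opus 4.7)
The strategy is to stratify the unattainable locus by the rank defect $j-1\geq 0$ of $\M_{\bfu,\bfv,\bfn,k}$, which measures the drop in bidegree of the minimal WHIP solution relative to $(k-1,n-k)$. By Propositions \ref{uni} and \ref{2.5} and Theorem \ref{hipp} as recalled in the introduction, there is a unique (up to scalar) minimal-bidegree WHIP solution $(A^*,B^*)$, and the RHIP is solvable at $(\bfu,\bfv)$ iff $B^*(u_i)\neq 0$ for every $i$. Hence the unattainable set decomposes as the disjoint union, over $j\geq 1$ and $i\in\{1,\ldots,l\}$, of the loci where the rank defect equals exactly $j-1$ and $B^*(u_i)=0$.

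The first technical step is the rank characterization. I would show that $\mathrm{rank}\,\M_{\bfu,\bfv,\bfn,k} = n-j+1$ precisely when the ``diagonal'' minors
\[
\Delta^\bfn_{k-j+2,k-j+2}=\cdots=\Delta^\bfn_{k+j-1,k+j-1}=0
\]
vanish and at least one of the boundary minors $\Delta^\bfn_{k-j+1,k-j+1}$, $\Delta^\bfn_{k+j,k+j}$ is nonzero: the $2j-2$ diagonal vanishings force the rank drop to be at least $j-1$, while the boundary nonvanishing forces it to be at most $j-1$. This step leans on the confluent-Vandermonde/Hermite block structure of \eqref{symbML}--\eqref{symbMR}, which makes rank defects cascade along the diagonal of $\M_{\alpha,\beta,\bfn}$ in a controlled way rather than at arbitrary submatrices.

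Next, I would read off $B^*$ explicitly on each open stratum via Cramer's rule applied to a square subsystem of \eqref{whermite} pivoted on the nonvanishing boundary minor. A careful index count shows that on $\{\Delta^\bfn_{k-j+1,k-j+1}\neq 0\}$ the coefficients of $B^*(x)$ are, up to sign, the minors $\Delta^\bfn_{k-j+1,\ell+1}$ with $\ell$ in the stated range, so the vanishing $B^*(U_i)=0$ is precisely the first equation in \eqref{j}; the symmetric argument on $\{\Delta^\bfn_{k+j,k+j}\neq 0\}$ produces the second equation. On the overlap $\{\Delta^\bfn_{k-j+1,k-j+1}\neq 0\}\cap\{\Delta^\bfn_{k+j,k+j}\neq 0\}$ both expressions compute the same nonzero multiple of $B^*(u_i)$, yielding the stated coincidence; this reduces to a standard cofactor identity among minors of the affected submatrices of $\M_{k-j,n-k+j-1,\bfn}$ and $\M_{k+j-2,n-k-j+1,\bfn}$.

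Once these ingredients are in place, each half of $\cB_{2j-1}$ is cut out on its open stratum by $2j-2$ independent minor vanishings plus one independent evaluation vanishing, so the codimension is exactly $2j-1$. For rationality and irreducibility of the $l$ components over algebraically closed $\K$: since the right block \eqref{symbMR} is linear in the entries of $\bfV_i$ for fixed $i$, the full set of $2j-1$ defining equations is linear in a judiciously chosen subset of the $V_{i,t}$'s, so they can be solved explicitly and each component realized as the image of a dominant rational map from an affine space of the correct dimension. The main technical obstacle will be the rank-cascade bookkeeping in the first step, namely verifying that the prescribed diagonal minors, rather than some other combinatorial selection of submatrices, exactly detect the rank drop; a secondary difficulty is pinning down the summation limits and column indices in the Cramer's-rule formula for $B^*$, since $\M_{k-j,n-k+j-1,\bfn}$ and $\M_{k+j-2,n-k-j+1,\bfn}$ differ from $\M_{\bfu,\bfv,\bfn,k}$ by shifts on opposite sides.
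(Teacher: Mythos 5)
Your stratification plan (rank defect of $\M_{\bfu,\bfv,\bfn,k}$ equals $j-1$, then slice by $B^*(u_i)=0$) is the same one the paper uses, and the rank-characterization via the ``diagonal'' minors $\Delta^{\bfn}_{k-j+2,k-j+2},\ldots,\Delta^{\bfn}_{k+j-1,k+j-1}$ together with the boundary nonvanishings is exactly the content of Proposition~\ref{dimj}/Corollary~\ref{corimp}, and your reading off of $B^*$ via the maximal minors of $\M_{\bfu,\bfv,\bfn,k\mp j+1}$ matches Proposition~\ref{ppear}. You flag the rank-cascade bookkeeping yourself; that part is indeed the delicate induction in the paper (one must shuttle minimal WHIP solutions between the different matrices $\M_{k'-1,n-k',\bfn}$, using Hilbert--Burch for the base case), but it does go through.

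The genuine gap is the rationality/irreducibility argument. You assert that because the right block~\eqref{symbMR} is linear in the entries of $\bfV_i$, the $2j-1$ defining equations are linear in a suitable subset of the $V_{i,t}$'s and can be solved explicitly. This is false. Each entry of $MR_{u_i,\bfv_i,k,n}$ is a linear form in $\bfV_i$, but a minor of $\M_{\alpha,\beta,\bfn}$ multiplies such entries across the $n_i$ rows of block $i$, so the diagonal minors $\Delta^{\bfn}_{k',k'}$ are polynomials of degree up to $n-k'+1$ in $\bfV$, not linear; see the cubic expression $\Delta^{(5)}_{2,3}+\Delta^{(5)}_{2,4}U_1$ in the paper's Example~1.3. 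That the locus nevertheless ``looks linear'' on the relevant stratum in that example is an a posteriori simplification, not something one can read off. The paper sidesteps the issue entirely: it introduces the incidence variety $W_{\bfn,\alpha,\beta}$~\eqref{incid}, proves it is irreducible of the expected dimension by an induction on $n$ (Theorem~\ref{osu}), shows $\pi(W_{\bfn,\alpha,\beta})$ is rational via the explicit parametrization $V_{i,t}\mapsto\frac{1}{t!}\bigl(\frac{A}{B}\bigr)^{(t)}(U_i)$ of~\eqref{mapa} (Theorem~\ref{pret}, which itself needs Proposition~\ref{B(ui)} to guarantee regularity on a dense open set), identifies $V(\cF^{j-1}_{k,\bfn})$ with $\pi(W_{\bfn,k-j-1,n-k-j})$ (Theorem~\ref{oteo}), and obtains the $l$ rational components of $\cB_{2j-1}$ as the images $\pi(W^i_{\bfn,k-j,n-k-j+1})$ with $W^i=W\cap V(B(U_i))$, whose rationality drops out of the same machinery via Corollary~\ref{26}. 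Without some substitute for this incidence-variety parametrization, your plan does not establish either irreducibility or rationality of the $l$ components.
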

This result follows immediately from Theorems \ref{bp} and \ref{finn}. 

\begin{example}\label{beow}
For our example above, we have $k=2,l=2,\bfn=(2,1).$ So, $n=3,$ and  $m=1.$ 
According to Theorem \ref{mtt}, the set of unattainable points coincide with $\cB_1.$ Computing it explicitly  from \eqref{j}, we get 
$$
\cB_1=\Big(\{\Delta^{\bfn}_{2,3}+\Delta^{\bfn}_{2,4}U_1=0\}\cup\{\Delta^{\bfn}_{2,3}+\Delta^{\bfn}_{2,4}U_2=0\}\Big)\cap\Big(\{\Delta^{\bfn}_{2,2}\neq0\}\cup\{ \Delta^{\bfn}_{3,3}\neq0\}\Big),
$$
where 
the minors $\Delta^{\bfn}_{2,j}, j=2,3,4,$ are extracted from 
 $$\M_{1,1,\bfn}(\bfU,\bfV)=\left(\begin{array}{ccccc}1&U_{1}&-V_{1,0}&-V_{1,0}U_{1}\\ 0&1&-V_{1,1}&-V_{1,1}U_{1}-V_{1,0}\\ 1&U_{2}&-V_{2,0}&-V_{2,0}U_{2}
\end{array}\right),$$
while $\Delta^{\bfn}_{3,3}$ is the minor obtained by deleting the third column in
 $$\M_{2,0,\bfn}(\bfU,\bfV)=\left(\begin{array}{ccccc}1&U_{1}&U_1^2&-V_{1,0}\\ 0&1&2U_1&-V_{1,1}\\ 1&U_{2}&U_2^2&-V_{2,0}
\end{array}\right).$$
Computing all these expressions,  we  get
$$\begin{array}{lcl}
\Delta^{\bfn}_{2,2}&=&V_{1,0}^2 - V_{1,0} V_{2,0} - U_1 V_{1,1} V_{2,0} + U_2 V_{1,1} V_{2,0}\\
\Delta^{\bfn}_{3,3}&=& (U_2-U_1)^2\\
\Delta^{\bfn}_{2,3}+\Delta^{\bfn}_{2,4}U_1&=& (V_{2,0}-V_{1,0})(U_2-U_1) \\
\Delta^{\bfn}_{2,3}+\Delta^{\bfn}_{2,4}U_2&=&V_{1,1}(U_1-U_2)^2
\end{array}.$$
From the above we deduce that 
$\cB_1=\{V_{1,0}=V_{2,0},\, V_{1,1}\neq0\}\cup\{V_{1,1}=0,\,V_{1,0}\neq V_{2,0}\},$ a union of two rational (actually linear) varieties. 
In the situation above, we have that  $v_{1,1}=0,$ and hence we conclude that
$(1,2; (1,0),(0))$ is an unattainable point for the RHIP.
\end{example}

\begin{example}
Set $n=5,k=3,l=1,$ so we have $\bfn=(5),$ and $m=2.$  To compute their equations,  the matrices to be considered are
\begin{equation}\label{artilleria}
\begin{array}{l}
{\tiny \M_{1,3,({5})}(\bfU,\bfV)=\left(\begin{array}{cccccl}1&U_{1}&-V_{1,0}&-V_{1,0}U_{1}& -V_{1,0} U_1^2&-V_{1,0}U_1^3 \\
0&1&-V_{1,1}&-V_{1,1}U_{1}-V_{1,0} &-V_{1,1}U_1^2-2V_{1,0}U_1& -V_{1,1}U_1^3-3V_{1,0}U_1^2\\ 
0&0&-V_{1,2}&-V_{1,2}U_{1}-V_{1,1} & -V_{1,2}U_1^2-2V_{1,1}U_1-V_{1,0}&-V_{1,2}U_1^3-3V_{1,1}U_1^2-3V_{1,0}U_1\\
0& 0&-V_{1,3}& -V_{1,3}U_1-V_{1,2}& -V_{1,3}U_1^2-2V_{1,2}U_1-V_{1,1}&-V_{1,3}U_1^3-3V_{1,2}U_1^2-3V_{1,1}U_1-V_{1,0}\\
0&0&-V_{1,4}&-V_{1,4}U_1-V_{1,3}&-V_{1,4}U_1^2-2V_{1,3}U_1-V_{1,2}&-V_{1,4}U_1^3-3v_{1,3}U_1^2-3V_{1,2}U_1-V_{1,1}
\end{array}\right),}\\ 
\\
\M_{2,2,({5})}(\bfU,\bfV)=\left(\begin{array}{ccccccc}1&U_{1}&U_1^2&-V_{1,0}&-V_{1,0}U_{1}& -V_{1,0} U_1^2 \\
0&1&2U_1&-V_{1,1}&-V_{1,1}U_{1}-V_{1,0} &-V_{1,1}U_1^2-2V_{1,0}U_1\\ 
0&0&1&-V_{1,2}&-V_{1,2}U_{1}-V_{1,1} & -V_{1,2}U_1^2-2V_{1,1}U_1-V_{1,0}\\
0&0& 0&-V_{1,3}& -V_{1,3}U_1-V_{1,2}& -V_{1,3}U_1^2-2V_{1,2}U_1-V_{1,1}\\
0&0&0&-V_{1,4}&-V_{1,4}U_1-V_{1,3}&-V_{1,4}U_1^2-2V_{1,3}U_1-V_{1,2}
\end{array}\right), \\
\M_{3,1,({5})}(\bfU,\bfV)=\left(\begin{array}{ccccccc}1&U_{1}&U_1^2&U_1^3& -V_{1,0}&-V_{1,0}U_{1} \\
0&1&2U_1&3U_1^2&-V_{1,1}&-V_{1,1}U_{1}-V_{1,0} \\ 
0&0&1&3U_1&-V_{1,2}&-V_{1,2}U_{1}-V_{1,1} \\
0&0& 0&1&-V_{1,3}& -V_{1,3}U_1-V_{1,2}\\
0&0&0&0&-V_{1,4}&-V_{1,4}U_1-V_{1,3}
\end{array}\right),
\\ \mbox{and}\\
\M_{4,0,({5})}(\bfU,\bfV)=\left(\begin{array}{ccccccc}
1&U_{1}&U_1^2&U_1^3& U_1^4&-V_{1,0} \\
0&1&2U_1&3U_1^2&4U1^3&-V_{1,1}\\ 
0&0&1&3U_1&6 U_1^2&-V_{1,2}\\
0&0& 0&1&4 U_1&-V_{1,3}\\
0&0&0&0&1&-V_{1,4}
\end{array}\right).
\end{array}
\end{equation}
From Theorem \ref{mtt} we know that the set of unattainable points $\cB$ decomposes as $\cB_1\sqcup\cB_3,$ where$$
\cB_1=\{\Delta^{(5)}_{3,4}+\Delta^{(5)}_{3,5}U_1+\Delta^{(5)}_{3,6}U_1^2=0\}\cap\Big(\{\Delta^{(5)}_{3,3}\neq0\}\cup\{ \Delta^{(5)}_{4,4}\neq0\}\Big),
$$
is a rational variety of codimension $1,$ while $\cB_3\subset \{\Delta^{(5)}_{2,2}\neq0\}\cup\{ \Delta^{(5)}_{5,5}\neq0\}$ can be described as follows:
\begin{equation}\label{uuno}\cB_3\cap\{ \Delta^{(5)}_{2,2}\neq0\}=\{ \Delta^{(5)}_{3,3}=\Delta^{(5)}_{4,4}=\Delta^{(5)}_{2,3}+\Delta^{(5)}_{2,4}U_1=0\},
\end{equation}
and 
\begin{equation}\label{ddos}
\cB_3\cap\{ \Delta^{(5)}_{5,5}\neq0\}=\{ \Delta^{(5)}_{3,3}=\Delta^{(5)}_{4,4}=\Delta^{(5)}_{4,5}+\Delta^{(5)}_{4,6}U_1=0\}.
\end{equation}          
We compute explicitly two of the polynomials defining this set:
$$\begin{array}{ccl}
\Delta^{(5)}_{4,5}+\Delta^{(5)}_{4,6}U_1&=&V_{1,3},\\
\Delta^{(5)}_{2,3}+\Delta^{(5)}_{2,4}U_1&=&-V_{1,1}^3 + 2 V_{1,0} V_{1,1} V_{1,2} + U_1^2 V_{1,1} V_{1,2}^2 + 2 U_1^3 V_{1,2}^3 - V_{1,0}^2 V_{1,3} \\
& & - 
 U_1^2 V_{1,1}^2 V_{1,3} - U_1^2 V_{1,0} V_{1,2} V_{1,3} - 4 U_1^3 V_{1,1} V_{1,2} V_{1,3}  + 
 2 U_1^3 V_{1,0} V_{1,3}^2 \\ & &+ U_1^2 V_{1,0} V_{1,1} V_{1,4} + 2 U_1^3 V_{1,1}^2 V_{1,4} - 
 2 U_1^3 V_{1,0} V_{1,2} V_{1,4}.
 \end{array}$$
Even though they are both very different expressions, an explicit computation shows that in  $\{\Delta^{(5)}_{5,5}\neq0\neq\Delta^{(5)}_{2,2}\},$ both \eqref{uuno} and \eqref{ddos} are equal to:
$$V_{1,1}=0,\, V_{1,2}=0,\, V_{1,3}=0,$$
which confirms the claim of Theorem \ref{mtt} for this case. From this latter expression we also deduce that $\cB_3$ is a rational (linear) variety of codimension $3.$
\end{example}

The fact that several minors of different matrices as in \eqref{artilleria} should be considered is a consequence of the whole Extended Euclidean Algorithm one should perform to deal with this problem (see Proposition \ref{sia}). Being in the $j$-th component $\cB_j$ essentially means that the corresponding polynomials in the B\'ezout identity that would solve the RHIP fail to reach their expected degree in at least $j$ steps, and hence one should test up to this number of ``vanishing instances''.

Interestingly, one does not need to use several matrices to deal with the stratification of the set of unattainable points appearing in Theorem \ref{mtt}. Just the  the rank of 
$\M_{\bfu,\bfv,\bfn,k}$ is enough. This is the content of our second main result:
\begin{theorem}\label{mts}
With notations and hypothesis  as above, for $(\bfu, \bfv)\in(\K^l\setminus Z)\times \K^n,$ and  $1\leq j\leq m+1,\,(\bfu,\bfv)$ is such
that the minimal solution of the WHIP associated with this data has degrees bounded by $k-j$ and $n-k+1-j$ respectively if and only if $\dim_\K\left(\mbox{\rm ker}(\M_{\bfu,\bfv,\bfn,k})\right)\geq j.$ 
Moreover, 
$\dim_\K\left(\mbox{\rm ker}(\M_{\bfu,\bfv,\bfn,k})\right)= j$
if and only if the degree bound turns into an equality.
\par
The first situation is given by the following equations
\begin{equation}\label{Dj}
\Delta^{\bfn}_{k-j+2,k-j+2}=\Delta^{\bfn}_{k-j+3,k-j+3}= \dots = \Delta^{\bfn}_{k+j-1,k+j-1} =0
\end{equation}
(if $j=1,$ the above set of equations is empty), while the second one 
is the intersection of \eqref{Dj} with  $\{\Delta^{\bfn}_{k-j+1,k-j+1}\neq 0\} \cup\{\Delta^{\bfn}_{k+j,k+j}\neq 0\}.$
A minimal  solution for the WHIP is given in this region by
\begin{equation}\label{pair}\left\{
\begin{array}{lcl}
\left(\sum_{\ell=0}^{k-j}\Delta^{\bfn}_{k-j+1,\ell+1}x^\ell;\,\sum_{\ell=k-j+1}^{n-2j+2}\Delta^{\bfn}_{k-j+1,\ell+1}x^{\ell-k+j-1} \right) \ & \mbox{in} & \Delta^{\bfn}_{k-j+1,k-j+1}\neq0, \\
\left(\sum_{\ell=0}^{k-j}\Delta^{\bfn}_{k+j-1,\ell+1}x^\ell;\,\sum_{\ell=k+j-1}^{n}\Delta^{\bfn}_{k+j-1,\ell+1}x^{\ell-k-j+1} \right) \ & \mbox{in} & \Delta^{\bfn}_{k+j,k+j}\neq0.
\end{array}\right.
\end{equation}
If $\K$ is algebraically closed, \eqref{Dj} is a rational irreducible complete intersection variety in $(\K^l\setminus Z)\times \K^n$ of codimension $2(j-1)$ in
$(\K^l\setminus Z)\times \K^n.$
\end{theorem}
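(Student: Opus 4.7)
The strategy is threefold: (i) use uniqueness of the minimal solution to translate $\dim_\K\ker(\M_{\bfu,\bfv,\bfn,k})$ into a numerical invariant of the minimal WHIP pair $(A_0,B_0)$; (ii) interpret the minors $\Delta^{\bfn}_{i,i}$ as tests for the existence of low-degree solutions; (iii) build a rational parametrization of the locus defined by \eqref{Dj}.

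From Propositions \ref{uni} and \ref{2.5}, every kernel element of $\M_{\bfu,\bfv,\bfn,k}$ is a polynomial multiple $h\cdot(A_0,B_0)$ of the coprime minimal solution, so
$\dim_\K\ker(\M_{\bfu,\bfv,\bfn,k})=\min\{(k-1)-\deg A_0,\,(n-k)-\deg B_0\}+1$.
This single formula yields both stated equivalences: $\dim\ker\ge j$ iff $\deg A_0\le k-j$ and $\deg B_0\le n-k-j+1$, and $\dim\ker=j$ iff additionally at least one of the two bounds is attained. Next, $\Delta^{\bfn}_{i,i}$ is, up to sign, the $n\times n$ determinant obtained from $\M_{i-1,n-i,\bfn}(\bfu,\bfv)$ by deleting the column of the top $A$-coefficient, so its vanishing is equivalent to the existence of a nontrivial WHIP solution with $\deg A\le i-2$ and $\deg B\le n-i$, equivalently to $\deg A_0\le i-2$ and $\deg B_0\le n-i$. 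Letting $i$ range over $k-j+2,\ldots,k+j-1$ and taking the tightest bounds recovers the degree conditions above, proving the description by \eqref{Dj}; intersecting with $\{\Delta^{\bfn}_{k-j+1,k-j+1}\ne 0\}\cup\{\Delta^{\bfn}_{k+j,k+j}\ne 0\}$ then excludes the stricter bounds and pins $\dim\ker$ at $j$. For the formula \eqref{pair}, I would apply Cramer's rule to the matrix $\M_{k-j,n-k+j-1,\bfn}(\bfu,\bfv)$ (whose maximal rank is guaranteed by $\Delta^{\bfn}_{k-j+1,k-j+1}\ne 0$), reading off its $1$-dimensional kernel as the vector of signed maximal minors. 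By the previous step this vector must coincide, up to scalar, with $(A_0,B_0)$, which both produces the first pair of formulas and forces the higher $B$-coefficients of the Cramer vector to vanish — explaining the truncation of the summation at $n-2j+2$. The second pair is handled symmetrically using $\M_{k+j-1,n-k-j,\bfn}(\bfu,\bfv)$.

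For the geometric claim over $\overline{\K}$, I would construct a rational parametrization sending $(\bfu,A,B)$ with $\bfu\in\K^l\setminus Z$, $\deg A\le k-j$, $\deg B\le n-k-j+1$ and $B(u_i)\ne 0$ to $(\bfu,\bfv)$ with $v_{i,t}=\tfrac{1}{t!}(A/B)^{(t)}(u_i)$. Quotienting the source by the $\K^{*}$-scaling on $(A,B)$ yields an irreducible affine source of dimension $l+n-2j+2$, dominating the locus of \eqref{Dj}. Since a generic point in the image determines its coprime minimal pair up to scalar, the quotient map is birational onto its image; matching the $2(j-1)$ defining equations against the codimension $2(j-1)$ forces this locus to be a complete intersection, and rationality and irreducibility follow from the parametrization. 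The main obstacle is verifying that the image really is dense in the whole locus (not merely in one irreducible component): this reduces to showing that every point of \eqref{Dj} genuinely carries a coprime minimal pair of the claimed degrees, which is exactly the output of the first step.
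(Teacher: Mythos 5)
Your handling of the first two parts is correct and, in fact, takes a cleaner route than the paper for the equations \eqref{Dj}. The paper (Propositions \ref{dim1}, \ref{dim2}, \ref{imp}) establishes the characterization by induction on $j$, padding minimal solutions with trailing zeros to move between the spaces $\V_{\bfu,\bfv,\bfn,k\pm 1}$. Your observation that each $\Delta^{\bfn}_{i,i}$ is, up to sign, the determinant of the $n\times n$ system governing the WHIP with degree bounds $(i-2,n-i)$, and hence that its vanishing is equivalent to $\deg A_0\leq i-2$ and $\deg B_0\leq n-i$, directly yields all $2(j-1)$ equations in one shot by taking the tightest bounds at $i=k-j+2$ and $i=k+j-1$. (One should note in passing that Propositions \ref{uni} and \ref{2.5} are stated for degree bounds summing to $n-1$, whereas $(i-2)+(n-i)=n-2$; the uniqueness and divisibility arguments extend without difficulty, but this needs to be said.) The derivation of the formula \eqref{pair} via Cramer's rule on $\M_{k-j,n-k+j-1,\bfn}(\bfu,\bfv)$ and identifying the kernel vector with $(A_0,B_0)$ up to scalar is essentially the argument of Proposition \ref{ppear}.

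The gap is in the geometric claim, and the reduction you propose in the last sentence does not close it. You parametrize by $(\bfu,A,B)$ with $B(u_i)\neq 0$, which forces $(A,B)$ coprime; the image is therefore contained in the locus of points of \eqref{Dj} whose minimal pair $(A_0,B_0)$ is coprime. But the output of the first step is only that every point of \eqref{Dj} carries a minimal pair of the claimed degrees — not a \emph{coprime} one. Indeed, the unattainable strata $\cB_{2j-1}$ consist precisely of points in \eqref{Dj} with non-coprime $(A_0,B_0)$, so the claim ``every point of \eqref{Dj} genuinely carries a coprime minimal pair'' is false, and the reduction is not valid. What must actually be shown is that the non-coprime (equivalently, $B_0(u_i)=0$) locus has strictly smaller dimension inside \eqref{Dj}, and that \eqref{Dj} is irreducible, neither of which follows from step (i). The paper handles exactly this by first constructing the incidence variety $W_{\bfn,\alpha,\beta}$ (without the coprimeness or $B(U_i)\neq 0$ constraints), proving its irreducibility by an induction on $n$ that compares the dimensions of $W_{\bfn,\alpha,\beta}$ and of $W\cap\{{\bf B}(U_i)=0\}$ (Theorem \ref{osu}), and only then projecting (Theorems \ref{pret} and \ref{oteo}). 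Your direct parametrization of the projection is appealing, but to make it work you would need to import an argument of comparable weight to Theorem \ref{osu}; as written, irreducibility of \eqref{Dj} and the exactness of the codimension $2(j-1)$ remain unproved.
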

This result follows from Theorems \ref{hipp}, \ref{oteo}, \ref{mss}, and Proposition \ref{ppear}.
Note that for $j=1,$ both representations in \eqref{pair} are the same up to a constant.
\par
To illustrate Theorem \ref{mts} in our Example \ref{beow} above, as $l=2,\, k=2,\, n=3,\, \bfn=(2,1),$ and $m=1,$ we have
\begin{itemize}
\item $\dim_\K\left(\mbox{\rm ker}(\M_{\bfu,\bfv,(2,1),2})\right)=1 \iff \{\Delta^{(2,1)}_{2,2}\neq0\}\cup\{\Delta^{(2,1)}_{3,3}\neq0\},$
\item $\dim_\K\left(\mbox{\rm ker}(\M_{\bfu,\bfv,(2,1),2})\right)=2 \iff \{\Delta^{(2,1)}_{2,2}=\Delta^{(2,1)}_{3,3}=0\},$ and   $\{\Delta^{(2,1)}_{1,1}\neq0\}\cup\{\Delta^{(2,1)}_{4,4}\neq0\}.$
\end{itemize}
Computing the last two minors, we get:
$$\begin{array}{l}
\Delta^{(2,1)}_{1,1}=\det\left(
\begin{array}{ccc}
-V_{1,0}&-V_{1,0}U_1&-V_{1,0}U_1^2\\
-V_{1,1}&-V_{1,1}U_{1}-V_{1,0}&-V_{1,1}U_1^2-2V_{1,0}U_{1}\\
-V_{2,0}&-V_{2,0}U_2&-V_{2,0}U_2^2
\end{array}
\right)=0, \\ \\
\Delta^{(2,1)}_{4,4}=\det\left(
\begin{array}{ccc}
1&U_1&U_1^2\\
0&1&2U_1\\
1&U_2&U_2^2
\end{array}
\right)=(U_1-U_2)^{2},
\end{array}$$
so $\{\Delta^{(2,1)}_{1,1}\neq0\}\cup\{\Delta^{(2,1)}_{4,4}\neq0\}$ is actually equal to the whole ambient space
$(\K^l\setminus Z)\times \K^n,$ and we have that  $\dim_\K\big(\mbox{\rm ker}(\M_{\bfu,\bfv,(2,1),2})\big)=2$ if and only if
$\{\Delta^{(2,1)}_{2,2}=\Delta^{(2,1)}_{3,3}=0\}.$ In the ``generic'' case, when the dimension is equal to one, a solution \eqref{pair} is given by
the maximal minors of $\M_{\bfu,\bfv,(2,1),2,3},$  i.e.
$$\big(\Delta^{(2,1)}_{2,1}+\Delta^{(2,1)}_{2,2}x;\,\Delta^{(2,1)}_{2,3}+\Delta^{(2,1)}_{2,4}x\big).$$
When $\dim=2,$  \eqref{pair} gives as solutions for the WHIP the nontrivial constant functions
$$\begin{array}{ccl}
\big(\Delta^{(2,1)}_{1,1}; \Delta^{(2,1)}_{1,2}\big)& \mbox{in}\  \Delta^{(2,1)}_{1,1}\neq0,\\
\big(\Delta^{(2,1)}_{3,1}; \Delta^{(2,1)}_{3,4}\big)& \mbox{in}\  \Delta^{(2,1)}_{4,4}\neq0
\end{array}
$$
(The fact that $\Delta^{(2,1)}_{4,4}=\Delta^{(2,1)}_{3,4}$ -see Lemma \ref{deltas}- shows that the second expression above is also not identically zero).

\bigskip
The paper is organized as follows: in Section \ref{s2}
we  show that all the solutions of the WHIP are polynomial multiples of a ``minimal solution,'' in the sense
that all other solutions are polynomial multiples of this one. We show in Theorem \ref{hipp} that a minimal solution solves the
RHIP if and only if its components are coprime polynomials, and  give an algorithm (Algorithm \ref{algho}) to compute the minimal
solution and verify if the RHIP is solvable based on these facts. We end that section by reviewing the Extended Euclidean Algorithm's classical
method to deal with this problem in light of our results in \S \ref{EAR}. All these theoretical results are well known and documented in the literature (see \cite{wuy75, vzGJ13}), we include them here to set the notation within the context of the following statements, and also to set the basis for the algorithms produced therein.
\par In Section \ref{GD} we look at the geometry of the problem. In Theorems \ref{oteo} and \ref{bp} we show that the sets described above are proper union of open irreducible rational varieties  in $\big(\K^l\setminus Z\big)\times(\K^\times)^n.$ Our proof is constructive, so we can develop an algorithm (Algorithm \ref{algop}) to deal with the RHIP without the need of computing any solution of the WHIP.

\par In Section \ref{EPF} we produce the equations which appear in \eqref{Dj} and \eqref{pair}, and complete with the proofs of the main theorems.

\bigskip
\section{minimal solutions}\label{s2}
All along the text we will assume that Char$(\K)=0$ or Char$(\K)\geq\max\{n_1,\ldots, n_l\}.$ In this section, we look at the structure of solutions of the WHIP. We will see that all of them are multiple of a so-called ``minimal solution'',  give an
algorithm for computing this minimal solution, and decide by looking at its decomposition if the RHIP has a solution or not. We will compare our
results with the classical standard procedure for solving the RHIP via the Extended Euclidean Algorithm.

\begin{proposition}\label{uni}
Let $(\bfu,\bfv)\in\left(\K^l\setminus Z\right)\times \K^n.$ For any solution $(A_{\bfu,\bfv,\bfn,k-1}(x), B_{\bfu,\bfv,\bfn,n-k}(x))$ of the WHIP \eqref{whermite} with $B_{\bfu,\bfv,\bfn,n-k}(x)\neq0,$ the rational function
$\frac{A_{\bfu,\bfv,\bfn,k-1}(x)}{B_{\bfu,\bfv,\bfn,n-k}(x)}$ is
unique.
\end{proposition}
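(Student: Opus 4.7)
The plan is to re-interpret the WHIP equations in terms of divisibility, and then use a degree comparison to force the cross-product of two solutions to vanish.

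First I would observe that the WHIP condition \eqref{whermite} is equivalent to saying that, for each $i=1,\ldots,l$, the polynomial
\[
A_{\bfu,\bfv,\bfn,k-1}(x) - B_{\bfu,\bfv,\bfn,n-k}(x)\,T_i(x)
\]
vanishes at $x=u_i$ to order at least $n_i$, where $T_i(x):=\sum_{t=0}^{n_i-1} v_{i,t}(x-u_i)^{t}$ is the truncated Taylor polynomial encoding the interpolation data at $u_i$. Indeed, applying the Leibniz rule and using $T_i^{(t)}(u_i)=t!\,v_{i,t}$, one gets
\[
\bigl(B\,T_i\bigr)^{(j)}(u_i)=\sum_{t=0}^{j}\binom{j}{t}\,t!\,v_{i,t}\,B^{(j-t)}(u_i)=\sum_{t=0}^{j}(j)_t\,v_{i,t}\,B^{(j-t)}(u_i),
\]
so that \eqref{whermite} is precisely the statement that $(A - B\,T_i)^{(j)}(u_i)=0$ for $j=0,\ldots,n_i-1$. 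Equivalently, $(x-u_i)^{n_i}$ divides $A-B\,T_i$ in $\K[x]$. This step is routine once the Pochhammer/binomial bookkeeping is set up correctly, and I expect it to be the only slightly delicate part of the argument.

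Now suppose $(A,B)$ and $(A',B')$ are two solutions of the WHIP with $B,B'\neq 0$. Consider the polynomial
\[
\Phi(x):=A(x)B'(x)-A'(x)B(x).
\]
Using the identity $\Phi = (A - B\,T_i)B' - (A' - B'\,T_i)B$, which holds for every $i$, and the divisibility just established, we conclude that $(x-u_i)^{n_i}$ divides $\Phi$ for each $i=1,\ldots,l$. Since the $u_i$'s are pairwise distinct (recall $\bfu\in\K^l\setminus Z$), the polynomial $H(x):=\prod_{i=1}^{l}(x-u_i)^{n_i}$, of degree $n$, divides $\Phi(x)$.

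Finally, the degree bounds on the WHIP solutions give $\deg(A B')\leq(k-1)+(n-k)=n-1$ and likewise $\deg(A' B)\leq n-1$, whence $\deg\Phi\leq n-1 < n =\deg H$. A nonzero polynomial cannot be divisible by one of strictly larger degree, so $\Phi\equiv 0$, i.e.\ $AB'=A'B$ in $\K[x]$. Since $B$ and $B'$ are both nonzero, this yields the identity
\[
\frac{A_{\bfu,\bfv,\bfn,k-1}(x)}{B_{\bfu,\bfv,\bfn,n-k}(x)}=\frac{A'_{\bfu,\bfv,\bfn,k-1}(x)}{B'_{\bfu,\bfv,\bfn,n-k}(x)}
\]
in $\K(x)$, proving uniqueness of the associated rational function.
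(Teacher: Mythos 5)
Your proof is correct and follows essentially the same route as the paper: both form the cross-product polynomial $AB'-A'B$, show it is divisible by $\prod_i(x-u_i)^{n_i}$, and conclude it vanishes since its degree is at most $n-1<n$. The only difference is that you explicitly justify the divisibility step via the truncated Taylor polynomials $T_i$, whereas the paper simply asserts that the derivatives of $P=A\tilde{B}-\tilde{A}B$ vanish at the $u_i$; your extra explanation is a faithful unpacking of that assertion, not a different argument.
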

\begin{proof}

If both $(A_{\bfu,\bfv,\bfn,k-1}(x),\,B_{\bfu,\bfv,\bfn,n-k}(x))$ and $(\tilde{A}_{\bfu,\bfv,\bfn,k-1}(x),\tilde{B}_{\bfu,\bfv,\bfn,n-k}(x))$ satisfy \eqref{whermite},
the polynomial
$$P(x)=A_{\bfu,\bfv,\bfn,k-1}(x)\tilde{B}_{\bfu,\bfv,\bfn,n-k}(x)-\tilde{A}_{\bfu,\bfv,\bfn,k-1}(x)
B_{\bfu,\bfv,\bfn,n-k}(x)\in\K[x]
$$
has degree bounded by $n-1,$ and its derivatives $
P^{(j)}(x)$
vanishes in the different points $u_i,\,i=1\ldots, l$ for  $j=0,\ldots, n_{i}-1.$
Hence, $P(x)$ must be identically zero. This concludes with the proof of the claim.
\end{proof}

\smallskip
We will study now the structure of the solutions of the WHIP \eqref{whermite}. For $\ell\in\N,$ we denote with $\K[x]_\ell$ the subspace of polynomials in
$\K[x]$ of degree bounded by $\ell.$ For $n, k\in\N,\,1\leq k\leq n,$ and
$(\bfu,\bfv)\in\left(\K^l\setminus Z\right)\times \K^n,$ denote with $\V_{\bfu,\bfv,\bfn,k}\subset\K[x]_{k-1}\oplus\K[x]_{n-k}$ the set defined as
$$\V_{\bfu,\bfv,\bfn,k}=\{(A_{k-1}(x),B_{n-k}(x)) \mbox{ satisfying }\eqref{whermite}
\}.
$$
Note that it is actually the nullspace of the matrix $\M_{\bfu,\bfv,\bfn,k},$ and hence the following claim holds straightforwardly.

\begin{proposition}\label{trump}
$\V_{\bfu,\bfv,\bfn,k}$ is a $\K$-vector subspace of $\K[x]_{k-1}\oplus\K[x]_{n-k}$ of positive dimension.
\end{proposition}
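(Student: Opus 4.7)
The plan is to observe that this is essentially a restatement of a rank-nullity argument applied to the matrix $\M_{\bfu,\bfv,\bfn,k}$ already introduced in the text. First I would make precise the identification between pairs $(A_{k-1}(x), B_{n-k}(x)) \in \K[x]_{k-1}\oplus\K[x]_{n-k}$ and coefficient vectors in $\K^{k}\oplus\K^{n-k+1} \cong \K^{n+1}$: writing $A_{k-1}(x)=\sum_{i=0}^{k-1}a_i x^i$ and $B_{n-k}(x)=\sum_{i=0}^{n-k}b_i x^i$ and stacking the coefficients in the order matching the columns of $\M_{\bfu,\bfv,\bfn,k}$.

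Next I would verify directly from \eqref{whermite}, using the Taylor/Leibniz expansions that already gave rise to the blocks $ML_{u_i,\bfv_i,k,n}$ and $MR_{u_i,\bfv_i,k,n}$ in \eqref{symbML} and \eqref{symbMR}, that the system \eqref{whermite} is exactly the linear system $\M_{\bfu,\bfv,\bfn,k}\cdot(a_0,\ldots,a_{k-1},b_0,\ldots,b_{n-k})^{t}=0$. Consequently $\V_{\bfu,\bfv,\bfn,k}$ is the kernel of a $\K$-linear map, and hence a $\K$-vector subspace of $\K[x]_{k-1}\oplus\K[x]_{n-k}$.

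For the positive-dimensionality claim I would simply count: $\M_{\bfu,\bfv,\bfn,k}$ is an $n\times(n+1)$ matrix over $\K$, as \eqref{whermite} consists of $\sum_{i=1}^{l} n_i = n$ equations in $n+1$ unknowns. By the rank-nullity theorem,
\[
\dim_\K \V_{\bfu,\bfv,\bfn,k}=\dim_\K\ker(\M_{\bfu,\bfv,\bfn,k})=(n+1)-\mathrm{rank}(\M_{\bfu,\bfv,\bfn,k})\geq (n+1)-n=1,
\]
which is the desired strict positivity. I do not anticipate any real obstacle here; the only point requiring a little care is keeping the column ordering of $\M_{\bfu,\bfv,\bfn,k}$ consistent with the ordering of the coefficients of $(A_{k-1},B_{n-k})$, so that the equivalence between \eqref{whermite} and the matrix-vector equation is genuinely literal rather than up to a permutation of unknowns.
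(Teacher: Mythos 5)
Your proof is correct and is essentially the same as the paper's: the paper simply notes that $\V_{\bfu,\bfv,\bfn,k}$ is the nullspace of the $n\times(n+1)$ matrix $\M_{\bfu,\bfv,\bfn,k}$ and invokes the same rank-nullity count you spell out.
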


\begin{lemma}\label{genn}
Up to a nonzero constant in $\K,$ there is a unique  $(A^0_{k-1}(x),B^0_{n-k}(x))\in\V_{\bfu,\bfv,\bfn,k}$ such that
$$\deg(A^0_{k-1}(x))=\min\{\deg(A_{k-1}(x)),\,(A_{k-1}(x),B_{n-k}(x))\in\V_{\bfu,\bfv,\bfn,k}\}.$$
\end{lemma}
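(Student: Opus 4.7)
\emph{Plan of proof.} Existence is immediate from Proposition \ref{trump}: the set of degrees of first components over the nonzero elements of $\V_{\bfu,\bfv,\bfn,k}$ is a nonempty subset of $\{-\infty,0,1,\ldots,k-1\}$, so one can pick any minimizer. The real content is uniqueness up to a nonzero constant, and my approach is to combine the vector-space structure of $\V_{\bfu,\bfv,\bfn,k}$ with the rationality of $A/B$ guaranteed by Proposition \ref{uni}.

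The first step is to rule out $B^0_{n-k}=0$ in a nonzero minimal pair $(A^0_{k-1},B^0_{n-k})\in\V_{\bfu,\bfv,\bfn,k}$: indeed, \eqref{whermite} with $B^0_{n-k}=0$ reads $(A^0_{k-1})^{(j)}(u_i)=0$ for $i=1,\ldots,l$ and $j=0,\ldots,n_i-1$, which forces $\prod_{i=1}^{l}(x-u_i)^{n_i}$---a polynomial of degree $n$---to divide $A^0_{k-1}$, a polynomial of degree $\leq k-1<n$, so $A^0_{k-1}=0$, a contradiction.

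Next, I would take two minimal pairs $(A^0_{k-1},B^0_{n-k})$ and $(\tilde A^0_{k-1},\tilde B^0_{n-k})$ with first components of the same minimal degree $d\geq 0$, and denote by $a,\tilde a\in\K^{\times}$ their leading coefficients. By Proposition \ref{trump},
$$\bigl(\tilde a\,A^0_{k-1}-a\,\tilde A^0_{k-1},\ \tilde a\,B^0_{n-k}-a\,\tilde B^0_{n-k}\bigr)$$
lies in $\V_{\bfu,\bfv,\bfn,k}$, and its first component has degree strictly less than $d$; by the minimality of $d$ together with the first step, it must be zero, so $\tilde a\,A^0_{k-1}=a\,\tilde A^0_{k-1}$. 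Since $B^0_{n-k}$ and $\tilde B^0_{n-k}$ are both nonzero by the first step, Proposition \ref{uni} gives $A^0_{k-1}\tilde B^0_{n-k}=\tilde A^0_{k-1}B^0_{n-k}$; substituting $\tilde A^0_{k-1}=(\tilde a/a)A^0_{k-1}$ and cancelling the nonzero polynomial $A^0_{k-1}$ yields $\tilde B^0_{n-k}=(\tilde a/a)B^0_{n-k}$, so the two pairs differ by the scalar $\tilde a/a$, as claimed.

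The point that I expect to require the most care is the degenerate case $d=-\infty$, where the minimum is attained only by pairs of the form $(0,B^0_{n-k})$ with $B^0_{n-k}\neq 0$, since the leading-coefficient trick collapses there. To handle it, I would first show that such a pair cannot coexist in $\V_{\bfu,\bfv,\bfn,k}$ with any $(A_1,B_1)$ having $A_1\neq 0$: adding the two, the resulting element $(A_1,B_1+B^0_{n-k})$ either has nonzero second component---contradicting the uniqueness of $A_1/B_1$ from Proposition \ref{uni}---or has zero second component, contradicting the first step. This reduces the degenerate case to $\V_{\bfu,\bfv,\bfn,k}\subseteq\{0\}\oplus\K[x]_{n-k}$, and uniqueness up to a scalar then follows from a direct analysis of the kernel of the restricted sub-system of \eqref{whermite} with $A=0$, using our standing assumption on $\mathrm{Char}(\K)$.
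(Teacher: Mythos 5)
Your argument for the case $d\geq 0$ is correct and takes essentially the same route as the paper's proof --- annihilating the leading coefficient of the first component by a linear combination in $\V_{\bfu,\bfv,\bfn,k}$ --- but is more explicit: the paper's one-sentence proof only shows that $A^0_{k-1}$ and $\tilde A^0_{k-1}$ are proportional and never addresses the second component, whereas you close that step via Proposition \ref{uni}. (You could in fact shortcut it: once $\tilde a\,A^0_{k-1}-a\,\tilde A^0_{k-1}=0$, the element $(0,\,\tilde a\,B^0_{n-k}-a\,\tilde B^0_{n-k})\in\V_{\bfu,\bfv,\bfn,k}$ must vanish outright, since a nonzero element of $\V_{\bfu,\bfv,\bfn,k}$ with first component $0$ would realize degree $-\infty<d$ and contradict the minimality of $d$. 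So Proposition \ref{uni} is not strictly needed here.)

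The genuine gap is in your treatment of the degenerate case $d=-\infty$. The reduction to $\V_{\bfu,\bfv,\bfn,k}\subseteq\{0\}\oplus\K[x]_{n-k}$ is fine, but the concluding assertion that ``uniqueness up to a scalar then follows from a direct analysis of the kernel'' is not substantiated, and is in fact false for the lemma as literally stated: take $\bfv=0$ and $k<n$. Then \eqref{whermite} is satisfied by every pair $(0,B)$ with $B\in\K[x]_{n-k}$, so $\V_{\bfu,\bfv,\bfn,k}=\{0\}\oplus\K[x]_{n-k}$ has dimension $n-k+1\geq 2$, and $(0,1)$ and $(0,x)$ are non-proportional nonzero elements both realizing the minimal value $\deg(A)=-\infty$. (The paper's own proof is equally silent on this case; the uniqueness claim really requires the tacit convention --- used implicitly in Theorem \ref{hipp} --- that the minimal element also minimizes $\deg B^0_{n-k}$.) So as written this last step of your plan would fail; you would need to either build that convention into the proof or add a hypothesis that excludes the degenerate case.
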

\begin{proof}
If there are two $(A^0_{k-1}(x),B^0_{n-k}(x)),\,(\tilde{A}^0_{k-1}(x),\tilde{B}^0_{n-k}(x))\in\V_{\bfu,\bfv,\bfn,k}$ such that $A^0_{k-1}(x)$ and
$\tilde{A}^0_{k-1}(x)$ are not proportional, then a nontrivial linear combination of these polynomials will produce an element in $\V_{\bfu,\bfv,\bfn,k}$
of strictly lower degree.
\end{proof}
\begin{definition}
A pair $(A^0_{k-1}(x),B^0_{n-k}(x))\in\V_{\bfu,\bfv,\bfn,k}$ satisfying the hypothesis of Lemma \ref{genn} will be called a {\em minimal element} of
$\V_{\bfu,\bfv,\bfn,k}.$
\end{definition}
The minimal element is unique if we require $A^0_{k-1}(x)$ to be monic.
The following result gives some light on the structure of the $\K$-vector space $\V_{\bfu,\bfv,\bfn,k}.$
\begin{proposition}\label{2.5}
Any element in $\V_{\bfu,\bfv,\bfn,k}$ is a polynomial multiple of a minimal element of this space.
\end{proposition}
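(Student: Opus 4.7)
My plan is to reduce the proposition, via Proposition~\ref{uni} and a single Euclidean division, to an application of the minimality definition. I will treat in detail the generic situation $A^0 \neq 0$; the degenerate case $A^0 = 0$ (where, as noted, every element of $\V_{\bfu,\bfv,\bfn,k}$ must have first coordinate $0$) admits an analogous but simpler argument.

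Step 1 is to extract from Proposition~\ref{uni} the underlying polynomial identity
\[
A B^0 \;=\; A^0 B \quad\text{in } \K[x].
\]
This follows because $A B^0 - A^0 B$ has degree at most $n-1$ and, upon rephrasing the WHIP as $A \equiv B T_i \pmod{(x-u_i)^{n_i}}$ for suitable Taylor polynomials $T_i$, is divisible by each $(x-u_i)^{n_i}$; by the Chinese Remainder Theorem it is then divisible by $W(x) := \prod_{i=1}^l (x-u_i)^{n_i}$, a polynomial of degree $n$, and hence vanishes.

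Step 2 is the Euclidean division $A = Q A^0 + R$ with $\deg R < \deg A^0$. The candidate factor is $Q$, so the target reduces to proving $R=0$ and $B = Q B^0$. For this I will show that the pair $(R,\, B - Q B^0)$ itself lies in $\V_{\bfu,\bfv,\bfn,k}$: the WHIP relations for this pair follow by subtracting $Q$ times the relations for $(A^0, B^0)$ from those for $(A, B)$. The delicate point is the degree bound on $B - Q B^0$. Using the identity of Step~1 one rewrites
\[
B - Q B^0 \;=\; \frac{R\, B^0}{A^0};
\]
setting $g=\gcd(A^0, B^0)$ and $A^0 = g \tilde A^0$, $B^0 = g \tilde B^0$ with $\gcd(\tilde A^0, \tilde B^0)=1$, the fact that this rational function is actually a polynomial (namely $B - Q B^0$) forces $\tilde A^0 \mid R$, and the estimate
\[
\deg(B - Q B^0) \;=\; \deg R - \deg \tilde A^0 + \deg \tilde B^0 \;<\; \deg g + \deg \tilde B^0 \;=\; \deg B^0 \;\leq\; n-k
\]
places $B - Q B^0$ in $\K[x]_{n-k}$, as required.

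Step 3 invokes minimality: since $(R,\, B - Q B^0) \in \V_{\bfu,\bfv,\bfn,k}$ with $\deg R < \deg A^0$, the very definition of a minimal element forces $R = 0$. Plugging back into the identity of Step~1 gives $B = Q B^0$, whence $(A, B) = Q \cdot (A^0, B^0)$. The main obstacle in the argument is the degree control in Step~2: a priori $Q B^0$ can have degree as large as $n-1$, and it is precisely the rigidity of the identity $A B^0 = A^0 B$ (ultimately coming from Proposition~\ref{uni}) that produces the cancellations putting $B - Q B^0$ back into $\K[x]_{n-k}$.
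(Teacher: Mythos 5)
Your argument is correct, but it follows a genuinely different route from the paper's. Both proofs rest on two shared ingredients: the identity $A\,B^{0}=A^{0}B$ coming from Proposition~\ref{uni}, and the observation that the WHIP conditions are stable under multiplication by an arbitrary polynomial (which you make transparent via the modular reformulation $W(x)\mid A-GB$, $W:=\prod_i(x-u_i)^{n_i}$; the paper asserts the same stability at the start of its proof, and it underlies the computation in Proposition~\ref{sia}). Where you diverge is what happens next. You perform a single Euclidean division $A=QA^{0}+R$, use the identity to get $A^{0}(B-QB^{0})=RB^{0}$ and hence the degree bound $\deg(B-QB^{0})=\deg R+\deg B^{0}-\deg A^{0}<\deg B^{0}\le n-k$, place $(R,\,B-QB^{0})$ in $\V_{\bfu,\bfv,\bfn,k}$, and kill $R$ by minimality of $\deg A^{0}$. (Your detour through $g=\gcd(A^0,B^0)$ is harmless but unnecessary for the degree estimate.) The paper instead writes $A/B=A^{00}/B^{00}$ in lowest terms, identifies the index set $I$ and the orders $j_i$ at which $(A^{00},B^{00})$ first fails the WHIP, and runs an induction on Taylor coefficients of the cofactor $C$ to show $\prod_{i\in I}(x-u_i)^{n_i-j_i}\mid C$. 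Your argument is shorter and avoids that derivative induction entirely; the paper's is constructive and explicitly exhibits the minimal element as $(\tilde C A^{00},\tilde C B^{00})$, which is then reused elsewhere (e.g.\ in the proof of Theorem~\ref{hipp}, where solvability of the RHIP is tied to $I=\emptyset$). One point to flag on your side: in the degenerate case $A^0=0$ the ``analogous'' division on the second coordinates requires minimality of $\deg B^{0}$, which is not literally what Lemma~\ref{genn} records (it only minimizes $\deg A^{0}$); you would need to invoke the natural refinement --- minimize $\deg B^{0}$ among elements already minimizing $\deg A^{0}$ --- to close that case. The paper's constructive proof sidesteps this because it builds the minimal element directly rather than appealing to Lemma~\ref{genn}.
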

\begin{proof}
Clearly any polynomial multiple of an element of $\V_{\bfu,\bfv,\bfn,k}$ 
(as long as the degrees of each of the two components do not go above $k-1$ and $n-k$ respectively) is an element
of $\V_{\bfu,\bfv,\bfn,k}.$ 

Let $(A_{k-1}(x),B_{n-k}(x))\in\V_{\bfu,\bfv,\bfn,k}$ with $B_{n-k}(x)\neq 0$ and $A^{00}_{k-1}(x),B^{00}_{n-k}(x)\in\K[x]$ be such that $\gcd(A^{00}_{k-1}(x),B^{00}_{n-k}(x))=1$ and
$\frac{A_{k-1}(x)}{B_{n-k}(x)}=\frac{A^{00}_{k-1}(x)}{B^{00}_{n-k}(x)}.$ 

If $(A^{00}_{k-1}(x), B^{00}_{n-k}(x))\in \V_{\bfu,\bfv,\bfn,k}$ then  $(A^{00}_{k-1}(x), B^{00}_{n-k}(x))$ is a minimal element in  $\V_{\bfu,\bfv,\bfn,k}$ and the claim follows from Proposition \ref{uni}. Otherwise, let 
$I$ be the set of those $i\in \{1,\ldots, l\}$ such that  there is $j=0,\ldots, n_i-1$ such that $(A^{00}_{k-1}(x), B^{00}_{n-k}(x))$ does not satisfy the identity 
$A_{\bfu,\bfv,\bfn,k-1}^{(j)}(u_i)=\sum_{t=0}^{j}(j)_t v_{i,t}\,B^{(j-t)}_{\bfu,\bfv,\bfn,n-k}(u_i)$ coming from \eqref{whermite}.  For $i\in I$, let
 $j_i= \min\{j\,|\,(A^{00}_{k-1})^{(j)}(u_i)\neq \sum_{t=0}^{j}(j)_tv_{i,j}(B^{00}_{n-k})^{(j-t)}(u_i)\}.$ Thanks to Proposition \ref{uni}, for any $(A_{k-1}(x),B_{n-k}(x))\in\V_{\bfu,\bfv,\bfn,k}$  there exists $C(x)\in\K[x]$ such that
$$A_{k-1}(x)=C(x)A^{00}_{k-1}(x),\, B_{n-k}(x)=C(x)B^{00}_{n-k}(x).$$
We claim that $C^{(j)}(u_i)=0$ for all $i\in I$ and $j=0,\ldots,n_i-j_i$. Then, from the claim, we deduce that a minimal element in  $\V_{\bfu,\bfv,\bfn,k}$ is $(A^0_{k-1}(x),B^0_{n-k}(x))= (\tilde{C}(x)A^{00}_{k-1}(x),\tilde{C}(x)B^{00}_{n-k}(x))$ with $\tilde{C}(x)=\prod_{i\in I}(x-u_i)^{n_i-j_i}\,| C(x)$.

To prove the claim we use induction on $j$. For $j=0,$ we have
$$A^{(j_{i})}_{k-1}(u_{i})=\sum_{t=0}^{j_{i}}{j_{i}\choose t}C^{(t)}(u_{i})(A^{00}_{k-1})^{(j_{i}-t)}(u_{i})=\sum_{s=0}^{j_{i}} (j_i)_sv_{i,s}(B_{n-k})^{(j_{i}-s)}(u_{i})$$
$$=\sum_{s=0}^{j_{i}}(j_i)_sv_{i,s}\sum_{t=0}^{j_{i}-s}{j_{i}-s\choose t}C^{(t)}(u_{i})(B^{00}_{n-k})^{(j_{i}-s-t)}(u_{i})=$$
$$=\sum_{t=0}^{j_{i}}{j_{i} \choose t}C^{(t)}(u_{i}) \sum_{s=0}^{j_{i}-t}(j_i-t)_sv_{i,s}(B^{00}_{n-k})^{(j_{i}-t-s)}(u_{i}) ,
$$
and we deduce from the definition of $j_{i}$ that $C(u_{i})=0$. Assume now  $1\leq j \leq n_i-j_i-1.$ Hence,
 \begin{equation}\label{tiki}
 \begin{array}{cl}
 A^{(j_{i}+j)}_{k-1}(u_{i})&=\sum_{t=0}^{j_{i}+j}{j_{i}+j\choose t}C^{(t)}(u_{i})(A^{00}_{k-1})^{(j_{i}+j-t)}(u_{i}) \\
 & =\sum_{s=0}^{j_{i}+j}(j_{i}+j)_s
 v_{i,s}(B_{n-k})^{(j_{i}+j-s)}(u_{i}) \\
&=\sum_{s=0}^{j_{i}+j}(j_{i}+j)_s v_{i,s}\sum_{t=0}^{j_{i}+j-s}{j_{i}+j-s\choose t}C^{(t)}(u_{i})(B^{00}_{n-k})^{(j_{i}+j-s-t)}(u_{i})
\\ &=\sum_{t=0}^{j_{i}+j}{j_{i}+j \choose t}C^{(t)}(u_{i}) \sum_{s=0}^{j_{i}+j-t}(j_{i}-t)_sv_{i,s}(B^{00}_{n-k})^{(j_{i}+j-t-s)}(u_{i}).
\end{array}
\end{equation}
By the inductive hypothesis, we have that $C^{(t)}(u_i)=0$ for $0\leq t\leq j-1,$ and hence \eqref{tiki} above becomes 
$$ \sum_{t=j}^{j_{i}+j}{j_{i}+j\choose t}C^{(t)}(u_{i})(A^{00}_{k-1})^{(j_{i}+j-t)}(u_{i}) =\sum_{t=j}^{j_{i}+j}{j_{i}+j \choose t}C^{(t)}(u_{i}) \sum_{s=0}^{j_{i}+j-t}(j_{i}-t)_sv_{i,s}(B^{00}_{n-k})^{(j_{i}+j-t-s)}(u_{i}).
$$
Due to the conditions imposed on the characteristic of $\K,$ we have  ${j_{i}+j\choose j}\neq0,$ and hence we deduce  that  $C^ {(j)}(u_{i})=0.$
\end{proof}
\smallskip
All the previous claims imply the following
\begin{theorem}\label{hipp}
Let $\K$ be a field  with Char$(\K)=0$ or Char$(\K)\geq\max\{n_1,\ldots, n_l\}.$
For a given $(\bfu,\bfv)\in\left(\K^l\setminus Z\right)\times \K^n,$ let $d_A$ (resp. $d_B$) denote the degree
of $A^0_{k-1}(x)$ (resp. $B^0_{n-k}(x)$) for a minimal element $(A^0_{k-1}(x),B^0_{n-k}(x))\in\V_{\bfu,\bfv,\bfn,k},$  and set
$s_0:=\min\{k-1-d_A,\,n-k-d_B\}.$ Then
\begin{itemize}
\item $\V_{\bfu,\bfv,\bfn,k}=(A^0_{k-1}(x),B^0_{n-k}(x))\cdot\K[x]_{s_0}.$
\item $\dim_\K(\V_{\bfu,\bfv,\bfn,k})=s_0+1.$
\item The RHIP is solvable if and only if $\gcd(A^0_{k-1}(x),B^0_{n-k}(x))=1.$
\end{itemize}
\end{theorem}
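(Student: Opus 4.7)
My strategy is to establish the three bullets in order, each being essentially a corollary of Proposition~\ref{2.5}.

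For the first bullet I would start from Proposition~\ref{2.5}: every $(A,B)\in\V_{\bfu,\bfv,\bfn,k}$ is of the form $(C\cdot A^0_{k-1},\,C\cdot B^0_{n-k})$ for some $C(x)\in\K[x]$, and the ambient degree constraints built into $\V_{\bfu,\bfv,\bfn,k}$ force $\deg C\le s_0$, so $C\in\K[x]_{s_0}$. For the reverse inclusion I need that $\V_{\bfu,\bfv,\bfn,k}$ is stable under multiplying both components by an arbitrary $C\in\K[x]_{s_0}$. The cleanest route I see is to recast WHIP as a divisibility condition: setting $P_i(x):=\sum_{t=0}^{n_i-1}v_{i,t}(x-u_i)^t$, a routine Leibniz calculation (using $(j)_t/t!=\binom{j}{t}/(j-t)!$) shows that \eqref{whermite} at $u_i$ is equivalent to $(x-u_i)^{n_i}\mid A(x)-P_i(x)B(x)$. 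This divisibility is plainly preserved by $(A,B)\mapsto(CA,CB)$ since $CA-P_i(CB)=C(A-P_iB)$, giving the required closure.

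The second bullet is then immediate: the resulting $\K$-linear map $C\mapsto(CA^0_{k-1},CB^0_{n-k})$ from $\K[x]_{s_0}$ to $\V_{\bfu,\bfv,\bfn,k}$ is surjective by the first bullet and injective because the minimal element is nonzero (by Proposition~\ref{trump}, $\V_{\bfu,\bfv,\bfn,k}$ has positive dimension). Hence $\dim_\K\V_{\bfu,\bfv,\bfn,k}=\dim_\K\K[x]_{s_0}=s_0+1$.

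For the third bullet I argue as follows. If $\gcd(A^0_{k-1},B^0_{n-k})=1$, then no $u_i$ can be a common root, so in particular $B^0_{n-k}(u_i)\neq 0$ for every $i$ --- otherwise the $j=0$ equation of \eqref{whermite} would give $A^0_{k-1}(u_i)=v_{i,0}B^0_{n-k}(u_i)=0$, placing $(x-u_i)$ inside the gcd. Thus $(A^0_{k-1},B^0_{n-k})$ itself solves the RHIP. Conversely, given any RHIP solution $(A,B)$, dividing out $\gcd(A,B)$ produces a coprime pair $(\tilde A,\tilde B)$ with the same rational function; since $\tilde B$ divides $B$, the value $\tilde B(u_i)$ is nonzero for every $i$, so $(\tilde A,\tilde B)$ is again an RHIP solution and hence lies in $\V_{\bfu,\bfv,\bfn,k}$. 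Applying the first bullet, $(\tilde A,\tilde B)=(CA^0_{k-1},CB^0_{n-k})$ for some $C\in\K[x]$, and coprimality of the left-hand side forces both $C$ and $\gcd(A^0_{k-1},B^0_{n-k})$ to be nonzero constants.

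The only point of delicacy will be the divisibility reformulation in the first bullet, but this is a routine combinatorial identity involving Pochhammer symbols and binomial coefficients and presents no real obstacle; the rest of the argument is pure bookkeeping.
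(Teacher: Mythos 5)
Your proof is correct and rests on the same pillar as the paper's: Proposition~\ref{2.5}. The first two bullets you derive exactly as the paper intends. Where you add value is in the explicit divisibility reformulation: rewriting the WHIP condition at $u_i$ as $(x-u_i)^{n_i}\mid A(x)-P_i(x)B(x)$ with $P_i(x)=\sum_{t<n_i}v_{i,t}(x-u_i)^t$ gives a clean, one-line justification of the closure of $\V_{\bfu,\bfv,\bfn,k}$ under multiplication by a polynomial $C\in\K[x]_{s_0}$, a step the paper dismisses as ``Clearly'' at the start of the proof of Proposition~\ref{2.5} (your reformulation is in fact the same observation the paper later uses, with $F(x)=\prod_i(x-u_i)^{n_i}$, inside the proof of Proposition~\ref{sia}). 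For the third bullet the paper appeals to the index set $I$ introduced in the proof of Proposition~\ref{2.5}, arguing that $I=\emptyset$ is equivalent to coprimality of the minimal pair; your reduction-to-lowest-terms argument is logically equivalent but self-contained, as it does not require the reader to re-open the internals of that earlier proof. Both versions are sound, and yours has the mild advantage of being stated entirely in terms of the statement's own data.
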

\begin{proof}
The first two claims follow straightforwardly from Proposition \ref{2.5}. For the last, following the notation of the proof of
Proposition \ref{2.5}, note that $(A^0_{k-1}(x),B^0_{n-k}(x))$ solves the RHIP if and only if the set of indexes $I$ is the empty set, which is equivalent
to  $\gcd(A^0_{k-1}(x),B^0_{n-k}(x))=1.$
\end{proof}
\smallskip
Thanks to Theorem \ref{hipp} we can produce the following algorithm.
\begin{algorithm}\label{algho}$^{}$
\par\noindent\underline{Input:} $(\bfu,\bfv)\in\big(\K^l\setminus Z)\times \K^n,$ $\K$ being a field with Char$(\K)=0$ or Char$(\K)\geq\max\{n_1,\ldots, n_l\}.$
\par\noindent\underline{Output:}  A reduced solution of the RHIP associated to $(\bfu,\bfv),$ or message that it does not have a solution.
\begin{enumerate}
\item Compute a nontrivial element of the kernel of the matrix  $\M_{\bfu,\bfv,\bfn,k}$ associated to the linear system \eqref{whermite}.
\item Extract polynomials $A_{k-1}(x)$ and $B_{n-k}(x)$ from the coordinates of the element computed in (1).
\item Remove the common factors of these two polynomials. Denote the reduced polynomials by $A^{00}_{k-1}(x)$ and $B^{00}_{n-k}(x)$.
\item if $(A^{00}_{k-1}(x),B^{00}_{n-k}(x))\in\mbox{\rm ker}(\M_{\bfu,\bfv,\bfn,k}),$ then return
$(A^{00}_{k-1}(x),B^{00}_{n-k}(x)),$ otherwise return ``no solution''.
\end{enumerate}
\end{algorithm}

\begin{remark}
If instead of ``a reduced solution'' one simply looks for ``a solution'', step (3) in Algorithm \ref{algho} can be
replaced with ``remove the common factors that these two polynomials have of the type $(x-u_i),\,i=1,\ldots, l$''.
\end{remark}

\subsection{Euclidean Algorithm Revisited}\label{EAR}
We review here the standard computational method used to deal with the RHIP, and compare it with our explorations. A good reference for this part of
the text is \cite[\S 5.8]{vzGJ13}. It turns out that the key connection to deal efficiently with this problem is the classical Euclidean Algorithm as Proposition \ref{sia} below show. Recall that for polynomials $F(x), G(x)\in\K[x],$   the \textit{Extended Euclidean Algorithm} consists of a finite matrix of $5$ columns $(i, q_i, r_i, s_i, t_i)$ such that
\begin{itemize}
\item the zeroth row is $(0, 0, F(x), 1, 0),$
\item the first row is $(1,Q_1(x),G(x),0,1),\, Q_1(x)$ being the Euclidean quotient between $F(x)$ and $G(x)$,
\item for $i\geq 2$, the $i$-th row is $(i, Q_i(x), R_i(x),S_i(x),T_i(x))$, where $R_i(x)$ is the remainder of the Euclidean division between $R_{i-2}(x)$ and $R_{i-1}(x),$ and $S_i(x),\,T_i(x)$ are the coefficients of the B\'ezout identity satisfying $$S_i(x)F(x)+T_i(x)G(x)=R_i(x).$$
\end{itemize}
Note that for $i\geq2,$ we have $\deg(R_i(x))<\deg(R_{i-1}(x)).$ In addition, we have that (\cite[Lemmas 3.8 \& 3.10]{vzGJ13})
\begin{equation}\label{bbund}
\begin{array}{ccl}
\deg(S_i(x))&\leq &\deg(G)-\deg(R_{i-1}(x))\\
\deg(T_i(x))&= &\deg(F)-\deg(R_{i-1}(x)),\\
\gcd(R_i(x),T_i(x))&=&\gcd(F(x),T_i(x)) \, \forall i\geq0.
\end{array}
\end{equation}
One of the interesting features of the EEA is that it produces ``short'' (in degree) B\'ezout identities in the following sense:
\begin{lemma}\label{ax}(\cite[Lemma 5.15]{vzGJ13})
Let $R(x),\,S(x),\,T(x)$ be such that $S(x)F(x)+T(x)G(x)=R(x)$ with $\deg(R(x))+\deg(T(x))<\deg(F(x)),$ and $\ell\in\N$ the index that
$$\deg(R_\ell(x))\leq \deg(R(x))<\deg(R_{\ell-1}(x)).$$
Then, there exists $C(x)\in\K[x]$ such that $$R(x)=C(x)R_\ell(x),\,S(x)=C(x)S_\ell(x),\,T(x)=C(x)T_\ell(x).$$
\end{lemma}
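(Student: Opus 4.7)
The plan is to exploit the fundamental Bézout identity produced by the EEA at step $\ell$, namely $S_\ell(x)F(x)+T_\ell(x)G(x)=R_\ell(x)$, and compare it with the given identity $S(x)F(x)+T(x)G(x)=R(x)$ through a cross-multiplication trick. First I would multiply the EEA identity by $T(x)$ and the hypothesis identity by $T_\ell(x)$ and subtract, obtaining
\begin{equation*}
\bigl(S_\ell(x)T(x)-S(x)T_\ell(x)\bigr)\,F(x) \;=\; R_\ell(x)T(x)-R(x)T_\ell(x).
\end{equation*}

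The next step is a careful degree analysis of the right-hand side using the bounds recalled in \eqref{bbund}. The term $R_\ell(x)T(x)$ has degree at most $\deg(R_\ell)+\deg(T)\leq \deg(R)+\deg(T)<\deg(F)$ by the hypothesis. The term $R(x)T_\ell(x)$ has degree exactly $\deg(R)+\deg(T_\ell)=\deg(R)+\deg(F)-\deg(R_{\ell-1})<\deg(F)$, since $\deg(R)<\deg(R_{\ell-1})$ by the choice of $\ell$. Both summands therefore have degree strictly less than $\deg(F)$, which forces $S_\ell(x)T(x)-S(x)T_\ell(x)=0$ and consequently $R_\ell(x)T(x)-R(x)T_\ell(x)=0$ as well.

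To finish, I would invoke the standard coprimality $\gcd(S_\ell(x),T_\ell(x))=1$ of the EEA, which is a direct consequence of the invariant $S_i T_{i+1}-S_{i+1}T_i=\pm 1$ propagated row by row from the initial row $(1,0)$ and $(0,1)$. From $S_\ell T = S T_\ell$ this coprimality gives $T_\ell(x)\mid T(x)$, so $T(x)=C(x)T_\ell(x)$ for a unique $C(x)\in\K[x]$; substituting into $S_\ell T=S T_\ell$ (using $T_\ell\neq 0$ for $\ell\geq 1$) yields $S(x)=C(x)S_\ell(x)$, and substituting into $R T_\ell = R_\ell T$ yields $R(x)=C(x)R_\ell(x)$.

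The main technical point to be careful about is the degree bound for $R(x)T_\ell(x)$, where one must combine the strict inequality $\deg(R)<\deg(R_{\ell-1})$ with the equality $\deg(T_\ell)=\deg(F)-\deg(R_{\ell-1})$ from \eqref{bbund} to avoid an off-by-one slip. The edge case $\ell=0$ is degenerate: then $\deg(R)\geq \deg(F)$ combined with $\deg(R)+\deg(T)<\deg(F)$ forces $T=0$, and the identity $SF=R$ immediately gives $R=CF=CR_0$ with $C=S=CS_0$, so the conclusion also holds (using the convention $\deg(R_{-1})=\infty$). Apart from this bookkeeping, the proof is driven purely by degree counting and the classical invariants of the EEA.
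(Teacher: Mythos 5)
The paper does not supply its own proof of this lemma: it is stated with the citation \cite[Lemma 5.15]{vzGJ13} and used as a black box. Your argument is a correct and self-contained proof, and it is essentially the standard one found in that reference: cross-multiply the two B\'ezout identities to obtain $(S_\ell T - S T_\ell)F = R_\ell T - R T_\ell$, bound the degree of the right-hand side strictly below $\deg(F)$ using the hypothesis together with $\deg(T_\ell)=\deg(F)-\deg(R_{\ell-1})$ from \eqref{bbund}, conclude that both sides vanish, and then use the coprimality $\gcd(S_\ell,T_\ell)=1$ (from the determinant invariant $S_i T_{i+1}-S_{i+1}T_i=\pm1$) to extract the common cofactor $C(x)$. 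Your degree accounting is careful and the $\ell=0$ edge case is handled correctly. The only small point worth stating explicitly is that $T_\ell\neq 0$ for $\ell\geq 1$ (which you implicitly use when dividing by $T_\ell$); this follows from the same determinant invariant, since $S_\ell T_{\ell+1}-S_{\ell+1}T_\ell=\pm 1$ would be impossible with $S_\ell$ and $T_\ell$ both zero, and $T_1=1$, $\deg(T_i)$ strictly increasing for $i\geq 1$.
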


We conclude this section by recovering the fundamental result in \cite{vzGJ13} which essentially states that one can solve the RHIP by looking at a specific row in the Extended Euclidean Algorithm.

\begin{proposition}\label{sia}(\cite[Exercise 5.42]{vzGJ13}])
With notation as above, for a given data $(\bfu,\bfv)\in\big(\K^l\setminus Z\big)\times\K^n,$ set
$F(x)=\prod_{i=1}^{l}F_{i}(x)=\prod_{i=1}^l(x-u_i)^{n_{i}},$ and let $G(x)\in\K[x]$ be the unique interpolating polynomial
of degree less than $n$ which satisfies $G^{(j)}_{i}(u_i)=j!\,v_{i,j},\,i=0,\ldots, l,\,j=1,\ldots,n_{i}-1.$
Denote with $\ell$ the minimal index such that
$\deg(R_\ell(x))\leq k-1.$   The RHIP is solvable if and only if $\gcd(R_\ell(x),T_\ell(x))=1.$ If this is the case, then
the pair $(R_\ell(x),T_\ell(x))$ is a solution of it.
\end{proposition}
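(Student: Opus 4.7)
The plan is to recast the WHIP as a divisibility condition so that the Extended Euclidean Algorithm of $F$ and $G$ becomes directly applicable. Using Leibniz's rule and the interpolation property $G^{(t)}(u_i)=t!\,v_{i,t}$,
\[
(BG)^{(j)}(u_i)=\sum_{t=0}^{j}\binom{j}{t}B^{(j-t)}(u_i)\,G^{(t)}(u_i)=\sum_{t=0}^{j}(j)_t\,v_{i,t}\,B^{(j-t)}(u_i),
\]
so the WHIP system \eqref{whermite} is equivalent to $(BG-A)^{(j)}(u_i)=0$ for all $i$ and $0\le j\le n_i-1$, i.e.\ $F\mid BG-A$. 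Therefore a pair $(A,B)$ with $\deg A\le k-1$ and $\deg B\le n-k$ lies in $\V_{\bfu,\bfv,\bfn,k}$ iff there exists $S\in\K[x]$ with $-SF+BG=A$; the characteristic hypothesis enters only to guarantee the existence of the Hermite interpolant $G$.

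For any such identity, $\deg A+\deg B\le (k-1)+(n-k)=n-1<\deg F$, so Lemma \ref{ax} yields $C\in\K[x]$ and an index $m$ (determined by $\deg R_m\le\deg A<\deg R_{m-1}$) with $A=C\,R_m$ and $B=C\,T_m$. Conversely, the $\ell$-th EEA row gives $S_\ell F+T_\ell G=R_\ell$, and both degree bounds hold: $\deg R_\ell\le k-1$ by the choice of $\ell$, while the minimality of $\ell$ forces $\deg R_{\ell-1}\ge k$, so $\deg T_\ell=n-\deg R_{\ell-1}\le n-k$ by \eqref{bbund}. Hence $(R_\ell,T_\ell)\in\V_{\bfu,\bfv,\bfn,k}$. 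For an arbitrary nontrivial element of $\V_{\bfu,\bfv,\bfn,k}$, the associated $m$ satisfies $m\ge\ell$ (because $\deg R_m\le k-1$ and $\ell$ is minimal with this property) and $m\le\ell$ (because $\deg T_m\le n-k$ combined with $\deg T_m=n-\deg R_{m-1}$ forces $\deg R_{m-1}\ge k>k-1$). Thus $m=\ell$ always, every element of $\V_{\bfu,\bfv,\bfn,k}$ is a polynomial multiple of $(R_\ell,T_\ell)$, and $(R_\ell,T_\ell)$ is itself a minimal element.

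By Theorem \ref{hipp} the RHIP is solvable iff a minimal element has coprime components, i.e.\ iff $\gcd(R_\ell,T_\ell)=1$; by the third identity of \eqref{bbund} this coincides with $\gcd(F,T_\ell)$, so $\gcd(R_\ell,T_\ell)=1$ is equivalent to $T_\ell(u_i)\ne 0$ for every $i$, in which case $(R_\ell,T_\ell)$ itself solves the RHIP. The most delicate step is the two-sided squeeze $m=\ell$ above, which uses both degree bounds on $A$ and on $B$ together with the EEA-specific identity $\deg T_i=n-\deg R_{i-1}$; everything else is either the Leibniz-based reformulation of WHIP as a divisibility condition or bookkeeping with the standard facts collected in \eqref{bbund}.
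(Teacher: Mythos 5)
Your proof is correct and follows essentially the same route as the paper's: reformulating the WHIP as the divisibility condition $F\mid BG-A$, then applying Lemma~\ref{ax} together with the degree identities of \eqref{bbund} to identify the relevant EEA row with the index $\ell$. The only organizational difference is that you squeeze $m=\ell$ directly for every nontrivial element of $\V_{\bfu,\bfv,\bfn,k}$, whereas the paper routes through a chosen minimal element $(A^0_{k-1},B^0_{n-k})$ and an auxiliary index $\ell'\ge\ell$; you also explicitly verify $\deg T_\ell\le n-k$ (via $\deg R_{\ell-1}\ge k$ and the second line of \eqref{bbund}), a small point the paper uses but leaves implicit when asserting that $(R_\ell,T_\ell)$ lies in $\V_{\bfu,\bfv,\bfn,k}$.
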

\begin{proof}

Given $A(x)$, $B(x) \in  \K[x]$ with $\deg(A(x))\leq k-1$ and $\deg(B(x))\leq n-k$ we have that
the pair $(A(x),B(x))$ gives a solution for the WHIP iff \linebreak $A^{(j)}(u_i)=(G(x)B(x))^{(j)}(u_i)$ for $ i=1,\ldots, l$ and $j=0,\ldots ,n_i -1$, equivalently if
$(A(x)-G(x)B(x))^{(j)}(u_i)=0$ for $ i=1,\ldots, l$ and  $j=0,\ldots ,n_i -1$; that is, iff $F(x)$ divides $A(x)- G(x)B(x)$ and so, iff $A(x)=S(x)F(x)+B(x)G(x) $ for some  $S(x)\in \K[x]$.

Let $(A^0_{k-1}(x),B^0_{n-k}(x))$ be a  minimal element in $\V_{\bfu,\bfv,\bfn,k}$. Then, since the pair $(R_{\ell}(x), T_{\ell}(x))$ gives a solution for the WHIP, we have, by Proposition \ref{2.5}, that
$(R_{\ell}(x), T_{\ell}(x))=(C(x)A^0_{k-1}(x), C(x)B^0_{n-k}(x))$ for some $C(x)\in \K[x]$. In particular, $\deg(B^0_{n-k}(x)) \leq \deg( T_{\ell}(x))$.

On the other hand, since $A^0_{k-1}(x)= S(x)F(x)+ B^0_{n-k}(x)G(x)$ for some $S(x)\in \K[x]$, $\deg(A^0_{k-1}(x)) + \deg(B^0_{n-k}(x))<\deg(F(x))$ and $\deg (A^0_{k-1}(x)) \leq \deg(R_{\ell}(x))$ we have, by Lemma \ref{ax}, that there exist $\ell' \geq\ell$ and $D(x) \in \K[x]$ such that
$(A^0_{k-1}(x),B^0_{n-k}(x) )= (D(x)R_{\ell'}(x),D(x)T_{\ell'}(x))$. In particular, we have that \linebreak $\deg( T_{\ell'}(x))\leq \deg(B^0_{n-k}(x))$.

Thus, $\deg( T_{\ell'}(x))\leq \deg(B^0_{n-k}(x))\leq \deg( T_{\ell}(x))$ and, $\deg( T_{\ell'}(x))\geq \deg( T_{\ell}(x))$ by \eqref{bbund}. So, $\ell =\ell'$ and  the pairs $(A^0_{k-1}(x),B^0_{n-k}(x))$  and $(R_{\ell}(x), T_{\ell}(x))$ are unique up to a constant in $\K$. The statement now follows from Theorem \ref{hipp}.

\end{proof}

\bigskip
\section{Geometric Description}\label{GD}
In this section we will study the geometry of the RHIP. We start by showing that all the matrices which are of interest in our problem have ``generic'' maximal rank if the field is large enough and its characteristic is zero or also large enough. Recall that we are always under the assumption that Char$(\K)=0$ or Char$(\K)\geq\max\{n_1,\ldots, n_l\}.$
\begin{proposition}\label{ufi}
Let $\alpha,\,\beta,\,n_{1},\ldots,n_{l}\in\Z_{\geq0}.$ If $\K$ is a field such that  $\#(\K)\geq l.$   Then, there are elements
$(\bfu,\bfv)\in(\K^l\setminus Z)\times\K^n$ such that $\M_{\alpha,\beta,{\bf n}}(\bfu,\bfv)$ has maximal rank.
\end{proposition}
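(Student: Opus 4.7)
The plan is to exhibit an explicit pair $(\bfu,\bfv)\in(\K^l\setminus Z)\times\K^n$ at which $\M_{\alpha,\beta,\bfn}(\bfu,\bfv)$ attains its maximal possible rank, namely $\min\{n,\alpha+\beta+2\}$. Since $\#(\K)\geq l$, I can first pick pairwise distinct elements $u_1,\ldots,u_l\in\K$ so that $\bfu\notin Z$. The idea is then to select $\bfv$ as the Hermite--Taylor data, at the nodes $u_i$, of a single concrete polynomial of degree just above $\alpha$, in order to reformulate the defining system \eqref{whermite} as a divisibility condition on polynomials.

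Concretely, I would set
$$
v_{i,t}\;:=\;\binom{\alpha+1}{t}u_i^{\alpha+1-t}\qquad\text{for }1\leq i\leq l,\ 0\leq t\leq n_i-1,
$$
so that $v_{i,t}$ is the $t$-th Taylor coefficient of $f(x):=x^{\alpha+1}$ at $u_i$. A direct Leibniz computation yields the identity
$$
\sum_{t=0}^{j}(j)_{t}\,v_{i,t}\,B^{(j-t)}(u_i)\;=\;\bigl(x^{\alpha+1}B(x)\bigr)^{(j)}(u_i),
$$
so a pair $(A(x),B(x))\in\K[x]_{\alpha}\oplus\K[x]_{\beta}$ lies in $\mbox{\rm ker}\bigl(\M_{\alpha,\beta,\bfn}(\bfu,\bfv)\bigr)$ if and only if all derivatives up to order $n_i-1$ of $A(x)-x^{\alpha+1}B(x)$ vanish at every $u_i$. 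Under the standing hypothesis on $\mathrm{Char}(\K)$, this is equivalent to divisibility by $(x-u_i)^{n_i}$ for each $i$, and hence to $F(x):=\prod_{i=1}^{l}(x-u_i)^{n_i}$ dividing $A-x^{\alpha+1}B$.

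To finish I would note that the linear map
$$
\phi:\K[x]_{\alpha}\oplus\K[x]_{\beta}\longrightarrow\K[x]_{\alpha+\beta+1},\qquad (A,B)\longmapsto A(x)-x^{\alpha+1}B(x),
$$
is an isomorphism of $\K$-vector spaces: both sides have dimension $\alpha+\beta+2$, and $A=x^{\alpha+1}B$ forces $B=0$ by degree reasons and hence $A=0$. Through $\phi$, the kernel of $\M_{\alpha,\beta,\bfn}(\bfu,\bfv)$ corresponds to $F(x)\cdot\K[x]_{\alpha+\beta+1-n}$, a subspace of dimension $\max\{0,\alpha+\beta+2-n\}$; consequently
$$
\mathrm{rank}\bigl(\M_{\alpha,\beta,\bfn}(\bfu,\bfv)\bigr)\;=\;(\alpha+\beta+2)-\max\{0,\alpha+\beta+2-n\}\;=\;\min\{\alpha+\beta+2,n\},
$$
which is maximal. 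The only place where care is needed is the equivalence between vanishing of the first $n_i$ derivatives and divisibility by $(x-u_i)^{n_i}$; that is exactly where the assumption $\mathrm{Char}(\K)=0$ or $\mathrm{Char}(\K)\geq\max\{n_1,\ldots,n_l\}$ enters, via the invertibility of the factorials $t!$ for $t\leq n_i-1$ that appear when reading Taylor coefficients off derivatives.
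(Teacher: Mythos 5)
Your proposal is correct and uses essentially the same specialization as the paper: $v_{i,t}$ is chosen to be the Hermite--Taylor data of $x^{\alpha+1}$ at the nodes $u_i$ (the sign discrepancy with the paper's $-\binom{\alpha+1}{j}u_i^{\alpha+1-j}$ is immaterial, since the matrix carries $-V_{i,j}$ in its entries). The paper concludes in one line by identifying the specialized matrix as a confluent Vandermonde matrix and citing its maximal rank, whereas you reach the same conclusion by computing the kernel dimension directly through the isomorphism $(A,B)\mapsto A-x^{\alpha+1}B$ and the divisibility criterion $F\mid A-x^{\alpha+1}B$ --- a somewhat more self-contained justification of the same fact, correctly flagging where the characteristic hypothesis is used.
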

\begin{proof}
For arbitrary $u_1,\ldots, u_l\in\K,$ we specialize 
\begin{equation}\label{paramm}
U_i\mapsto u_i,\,V_{i,j}\mapsto  -{\alpha+1 \choose j}u_i^{\alpha-j+1},\,1\leq i\leq l,\ 0\leq j\leq n_i-1
\end{equation} in \eqref{symbM}
and the matrix become a confluent Vandermonde one, which is known to have maximal rank  for any  $\bfu\in\K^l\setminus Z.$
\end{proof}
\smallskip

We would also need to show that for generic solutions of the RHIP, the denominator $B(x)$ is such that $B(u_i)\neq 0,\,i=1,\ldots, l.$

\begin{lemma}\label{build}
Let $n_{1},\ldots,n_{l}\in\Z_{\geq0},$ and $x, U_1,\ldots, U_l$  indeterminates over $\K.$ The polynomial
\begin{equation}\label{ino}
(x-U_1)^{n_1}\ldots (x-U_l)^{n_l}= \sum_{j=0}^{n}c_j(U_1,\ldots, U_l)x^j
\end{equation}
satisfies $c_j(U_1,\ldots, U_l)\neq0$ for all $j=0,\ldots, l.$
\end{lemma}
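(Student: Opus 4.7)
The plan is to expand the product in \eqref{ino} via the binomial theorem and, for each $j\in\{0,\ldots,l\}$, single out one monomial in $U_1,\ldots,U_l$ whose coefficient in $c_j$ is a nonzero element of $\K$. Since the $U_i$ are algebraically independent, distinct exponent tuples yield distinct monomials, so producing a single such monomial witness is enough to conclude $c_j\neq 0$.

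First I would carry out the multinomial expansion
\begin{equation*}
\prod_{i=1}^l(x-U_i)^{n_i}\;=\;\sum_{0\le k_i\le n_i}\Big(\prod_{i=1}^{l}(-1)^{k_i}\binom{n_i}{k_i}\Big)\,x^{n-\sum k_i}\prod_{i=1}^{l}U_i^{k_i},
\end{equation*}
so that, after comparing with \eqref{ino},
\begin{equation*}
c_j(U_1,\ldots,U_l)\;=\;(-1)^{n-j}\sum_{\substack{0\le k_i\le n_i\\ k_1+\cdots+k_l=n-j}}\Big(\prod_{i=1}^l\binom{n_i}{k_i}\Big)\prod_{i=1}^l U_i^{k_i}.
\end{equation*}
Then, for any $j\in\{0,\ldots,l\}$, I would pick a $j$-subset $S\subseteq\{1,\ldots,l\}$ and take the exponent tuple $k_i=n_i-1$ for $i\in S$ and $k_i=n_i$ for $i\notin S$. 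This tuple clearly satisfies $\sum k_i=n-j$ and $0\le k_i\le n_i$, and it contributes the coefficient $\pm\prod_{i\in S}n_i$ to the monomial $\prod_{i\in S}U_i^{n_i-1}\prod_{i\notin S}U_i^{n_i}$. The cases $j=0$ (empty $S$, coefficient $1$) and $j=l$ (full $S$, coefficient $\pm\prod_i n_i$) are the extremes of this family.

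The main obstacle is the remaining arithmetic check: one must ensure that $S$ can be chosen so that $\prod_{i\in S}n_i$ is nonzero in $\K$. Under $\mathrm{Char}(\K)=0$ or $\mathrm{Char}(\K)>\max\{n_1,\ldots,n_l\}$ every $n_i$ is invertible in $\K$ and any $S$ works; in the borderline case $\mathrm{Char}(\K)=\max\{n_1,\ldots,n_l\}$ one simply selects $S$ among those indices with $n_i\neq\mathrm{Char}(\K)$ while building it. Apart from this characteristic bookkeeping the argument is a purely formal multinomial identity, and no further combinatorial input is required.
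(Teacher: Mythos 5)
Your proof takes essentially the same route as the paper: both carry out the multinomial expansion of the product and then conclude that each $c_j$ is a nonzero polynomial in $\K[U_1,\ldots,U_l]$. The paper, however, stops at writing down the coefficient formula and asserting nonvanishing without argument; exhibiting the witness monomial (exponent tuple $k_i=n_i-1$ on $S$, $k_i=n_i$ off $S$, contributing coefficient $\pm\prod_{i\in S}n_i$) is a genuine improvement, since that is the only place the characteristic hypothesis actually enters.

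Two caveats. First, your tuple implicitly assumes $n_i\geq 1$ for $i\in S$ (you set $k_i=n_i-1$), whereas the lemma allows $n_i\in\Z_{\geq0}$; one must also steer $S$ away from indices with $n_i=0$, so the construction needs at least $j$ indices with $n_i\geq 1$. Second, and more seriously, your fallback for the borderline case Char$(\K)=\max\{n_1,\ldots,n_l\}=:p$ does not close the gap: if all (or enough) of the $n_i$ equal $p$, there are fewer than $j$ indices with $n_i\neq p$ and $S$ cannot be chosen. This is not merely a defect of the argument — the lemma itself fails under the paper's standing hypothesis, which permits Char$(\K)=\max\{n_i\}$. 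For instance, over $\mathbb{F}_p$ with $l=2$ and $n_1=n_2=p$, Frobenius gives
\begin{equation*}
(x-U_1)^p(x-U_2)^p=(x^p-U_1^p)(x^p-U_2^p)=x^{2p}-(U_1^p+U_2^p)x^p+(U_1U_2)^p,
\end{equation*}
so $c_1=0$ even though $1\leq l$. The statement is only correct under Char$(\K)=0$ or Char$(\K)>\max\{n_i\}$; under that strict hypothesis (and once $S$ avoids any $n_i=0$) your argument is complete, and it repairs the lacuna in the paper's proof.
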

\begin{proof}
We expand the left hand side of \eqref{ino} to get
$$
\sum_{0\leq \beta_i\leq n_i,\,1\leq i\leq l}(-1)^{n-\beta_1-\ldots-\beta_l}{n_1\choose\beta_1}\ldots {n_l\choose\beta_l}U_1^{\beta_1}\ldots U_l^{\beta_l}x^{\beta_1+\ldots+\beta_l}.
$$ From here, we deduce straightforwardly that
$$c_j(U_1,\ldots, U_l)=(-1)^{n-j}
\sum_{0\leq \beta_i\leq n_i,\,\sum \beta_i=j}{n_1\choose\beta_1}\ldots {n_l\choose\beta_l}U_1^{\beta_1}\ldots U_l^{\beta_l},
$$
is a nonzero polynomial, which concludes with the proof of the claim.
\end{proof}
\smallskip

\begin{proposition}\label{B(ui)}
Let $\alpha,\beta,n_1,\ldots, n_l\in\N$ be such that $\alpha+\beta+1\geq n_1+\ldots+n_l,$ and $\K$ a field of large cardinality.  Then, there exists $(\bfu,\bfv)\in(\K^l\setminus Z)\times\K^n$ such that the RHIP has a solution (i.e. the denominator $B(t)$ satisfies $B(u_i)\neq0, \,\forall i=1,\ldots, n$).
\end{proposition}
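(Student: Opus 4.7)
The approach is a direct existence argument by explicit construction. Since $\K$ has large cardinality (in particular, at least $l$ elements), we may select pairwise distinct $u_1,\ldots,u_l \in \K$, which gives $\bfu := (u_1,\ldots,u_l) \in \K^l \setminus Z$. Next, pick any polynomial $A(x) \in \K[x]$ with $\deg A \leq \alpha$ and any polynomial $B(x) \in \K[x]$ with $\deg B \leq \beta$ satisfying $B(u_i) \neq 0$ for every $i$; the trivial choice $A(x) = B(x) = 1$ already works, though one can alternatively take a nontrivial $B(x)$ whose roots avoid $\{u_1,\ldots,u_l\}$, which is possible since $\#\K$ is large.

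Then define
\[
v_{i,j} := \frac{1}{j!}\left(\frac{A}{B}\right)^{(j)}(u_i), \qquad 1 \leq i \leq l,\ 0 \leq j \leq n_i - 1,
\]
which is well-defined thanks to $B(u_i) \neq 0$. Setting $\bfv := (v_{1,0},\ldots,v_{l,n_l-1}) \in \K^n$, the pair $(A(x), B(x))$ satisfies \eqref{hermite} by construction, meets the degree bounds, and has denominator nonvanishing at each $u_i$. Hence $(\bfu,\bfv)$ is the desired point at which the RHIP is solvable.

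There is no substantive obstacle in this argument: it reduces to producing a specific rational function and reading off its Taylor expansion data at the prescribed nodes. The hypothesis $\alpha+\beta+1 \geq n$ is consistent with (and automatic in) the RHIP setup $\alpha=k-1,\ \beta=n-k$, but does not explicitly enter the construction; the only role of the cardinality assumption on $\K$ is to guarantee that $l$ distinct elements $u_1,\ldots,u_l$ can be selected, and (if one prefers a nontrivial $B$) that further elements outside $\{u_1,\ldots,u_l\}$ are available to serve as its roots.
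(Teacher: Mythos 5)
Your construction with $A(x)=B(x)=1$ does produce a point where the RHIP admits a solution, so it meets the literal wording, but it misses the content the paper actually establishes and uses. In the proof of Theorem \ref{pret}, Proposition \ref{B(ui)} is invoked to guarantee that the \emph{specific} pair $(A,B)$ whose coefficients are the signed maximal minors of $\M_{\alpha,\beta,\bfn}(\bfu,\bfv)$ has $B(u_i)\neq0$ at some point, hence on a nonempty Zariski-open set. Your chosen point gives no information about that pair: with $A=B=1$ one gets $v_{i,0}=1$ and $v_{i,j}=0$ for $j\geq1$, and then every pair $(cP(x),cP(x))$ with $\deg P\leq\min\{\alpha,\beta\}$ lies in $\V_{\bfu,\bfv,\bfn,\alpha+1}$; by Theorem \ref{hipp} its dimension is $\min\{\alpha,\beta\}+1\geq 2$ as soon as $\alpha,\beta\geq1$, so $\M_{\alpha,\beta,\bfn}(\bfu,\bfv)$ is rank-deficient there and all its maximal minors vanish. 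In other words, the minors polynomial $B$ is the zero polynomial at your point, and nothing about its generic nonvanishing follows.

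The paper's proof is built precisely to avoid this: it specializes $\bfv$ so that $\M_{\alpha,\beta,\bfn}(\bfu,\bfv)$ becomes a confluent Vandermonde matrix of maximal rank, realizes the minors pair via a bordered determinant equal to $w\prod_{i}(x-U_i)^{n_i}=A(x)+x^{\alpha+1}B(x)$, and then shows $B(U_i)\neq0$ by a leading-coefficient argument using Lemma \ref{build}. Your approach can be repaired without adopting that specialization: choose $A$ of degree exactly $\alpha$ and $B$ of degree exactly $\beta$ with $\gcd(A,B)=1$ and $B(u_i)\neq0$ for all $i$ (possible since $\#\K$ is large). Then by Theorem \ref{hipp} the kernel of $\M_{\alpha,\beta,\bfn}(\bfu,\bfv)$ is one-dimensional, so the matrix has maximal rank and the minors pair coincides with your $(A,B)$ up to a nonzero scalar, which is the nonvanishing actually required. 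Your observation that the hypothesis $\alpha+\beta+1\geq n$ ``does not enter'' is a symptom of the same mismatch: that inequality is exactly what makes the kernel nontrivial and the maximal-minor construction meaningful.
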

\begin{proof} We can assume w.l.o.g. that $\alpha+\beta+1=n_1+\ldots+n_l$ and, similarly to  \eqref{paramm}, do the substitution
$V_{i,j}\mapsto  -{\alpha+1 \choose j}U_i^{\alpha-j+1},\,1\leq i\leq l,\ 0\leq j\leq n_i-1.
$
 We do the substituion \eqref{paramm}, and have a matrix of maximal rank of confluent Vandermonde type, We call it $V_{\alpha,\beta}(\bfU).$ Note that if we add the following row to  $V_{\alpha,\beta}(\bfU):\, (1,\, x\,\ldots,x^{\alpha+\beta+1}),$ we then get a square matrix whose determinant is equal to $w\,\prod_{i=1}^l(x-U_i)^{n_i},$ with $w\in\K\setminus\{0\}.$ Write the latter as $A(x)+x^{\alpha+1}B(x),$ and note that the pair $(A(x),B(x))$ solves the WHIP for $k=\alpha+1.$ We must have $(A(x),B(x))\neq(0,0)$ as the coefficients of these two polynomials are the maximal minors of $V_{\alpha,\beta}(\bfU)$ which -thanks to Proposition \ref{ufi}- is of maximal rank.
 \par Due to the fact that $\K$ is large enough, the proof will be completed if we show that $B(U_i)\neq0$ for all $i=1,\ldots, n.$ Suppose that for some $i_0$ we have $B(U_{i_0})=0.$ Then we must have that $A(U_{i_0})=0,$ and hence  we deduce that
 \begin{equation}\label{quii}
 \frac{w\prod_{i=1}^l(x-U_i)^{n_i}}{x-U_{i_0}}=\tilde{A}(x)+x^{\alpha+1}\tilde{B}(x),
 \end{equation}
 with $\deg\big(\tilde{A}(x)\big)\leq \alpha-1.$ This shows that the right hand side of \eqref{quii} has the coefficient of $x^\alpha$ equal to zero, which is a contradiction with  Lemma \ref{build} applied to the data $(n_1,\ldots, n_{i_0}-1,\ldots, n_l),$  and completes the proof of the Proposition. 
\end{proof}
\smallskip

Recall that $m=\min\{k-1,n-k\}.$ We will fix $j\in\{0,\ldots, m\}.$ Let $\cF^j_{k,\bfn}\subset\K[\bfU,\bfV]$ be the ideal
of $(n-2j)\times(n-2j)$ minors of $\M_{k-j-1,n-k-j,\bfn}(\bfU, \bfV).$ For convenience, we set $\cF^{-1}_{k,\bfn}:=\langle 0\rangle.$ 
As $\M_{k-(j+1)-1,n-k-(j+1),\bfn}(\bfU,\bfV)$ is a submatrix of $\M_{k-j-1,n-k-j,\bfn}(\bfU,\bfV),$ the following
claim holds by performing a suitable Laplace expansion on the minors of the latter matrix.
\begin{lemma}\label{filt}
For $j=0,\ldots, m-1,$
$\cF^{j}_{k,\bfn}\subset\cF^{j+1}_{k,\bfn}.$
\end{lemma}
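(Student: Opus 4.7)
The plan is to make precise the way $A':=\M_{k-j-2,n-k-j-1,\bfn}(\bfU,\bfV)$ sits inside $A:=\M_{k-j-1,n-k-j,\bfn}(\bfU,\bfV)$ as a column-submatrix obtained by deleting exactly two columns, and then to apply a generalized Laplace expansion along those two columns to exhibit every generator of $\cF^{j}_{k,\bfn}$ as a $\K[\bfU,\bfV]$-linear combination of generators of $\cF^{j+1}_{k,\bfn}$.

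For the first step I would inspect the explicit shape of each block $\M_{\alpha,\beta,n_i}(U_i,V_i)$ and observe that its entries depend only on the column index itself (the power of $U_i$ labelling each column of the left block, and the exponent $s\in\{0,\ldots,\beta\}$ labelling each column of the right block) and not on the global values of $\alpha$ and $\beta$. Consequently, lowering $(\alpha,\beta)$ to $(\alpha-1,\beta-1)$ simply erases the last column of each block, so $A'$ is obtained from $A$ by removing precisely two columns; call them $c_1$ (the last column of the left block, in position $k-j$) and $c_2$ (the last column of $A$, in position $n-2j+1$). Thus $A$ has size $n\times(n-2j+1)$ and $A'$ has size $n\times(n-2j-1)$, while $\cF^{j}_{k,\bfn}$ and $\cF^{j+1}_{k,\bfn}$ are the ideals of $(n-2j)\times(n-2j)$ minors of $A$ and $(n-2j-2)\times(n-2j-2)$ minors of $A'$, respectively.

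For the second step, let $\Delta$ be any generator of $\cF^{j}_{k,\bfn}$, i.e.\ the determinant of an $(n-2j)\times(n-2j)$ submatrix of $A$ with column set $S$ (of size $n-2j$, so exactly one column of $A$ is omitted) and row set $R$. If $\{c_1,c_2\}\subset S$, the generalized Laplace expansion along $\{c_1,c_2\}$ yields
$$
\Delta=\sum_{r<s\in R}\pm\,\delta_{rs}\,\mu_{rs},
$$
where each $\delta_{rs}$ is a $2\times 2$ minor on columns $\{c_1,c_2\}$ and each $\mu_{rs}$ is the complementary $(n-2j-2)\times(n-2j-2)$ minor with column set $S\setminus\{c_1,c_2\}\subset$ columns of $A'$; in particular $\mu_{rs}\in\cF^{j+1}_{k,\bfn}$. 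If instead only one of $c_1,c_2$ is in $S$, I would first do a single-column Laplace expansion along that extra column, producing a combination of $(n-2j-1)\times(n-2j-1)$ minors of $A'$; a further cofactor expansion of each of those along any column expresses it in terms of $(n-2j-2)\times(n-2j-2)$ minors of $A'$, using the standard fact that the ideal of $t$-minors of a matrix contains all its $(t+1)$-minors. In all cases $\Delta\in\cF^{j+1}_{k,\bfn}$, giving the desired inclusion.

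I do not anticipate a real obstacle: the heart of the argument is nothing more than cofactor expansion. The one place where care is needed is the column-by-column matching of $A'$ inside $A$, so that the Laplace expansion is carried out along the correct pair $\{c_1,c_2\}$ and the complementary minors genuinely are minors of $A'$ rather than of some other submatrix of $A$.
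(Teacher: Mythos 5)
Your proposal is correct and follows exactly the route the paper takes: the paper's justification is the one-line remark preceding the lemma that $\M_{k-(j+1)-1,n-k-(j+1),\bfn}$ is a (two-fewer-column) submatrix of $\M_{k-j-1,n-k-j,\bfn}$, so a Laplace expansion along the deleted columns does the job. You have simply made this explicit, including the observation that the block entries depend only on the column index (so that deleting the last column of each block really is the right identification) and the bookkeeping of the two cases according to whether $c_1,c_2$ both lie in the chosen column set $S$ or only one does.
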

From Theorem \ref{hipp}, we deduce
that $1\leq\dim_\K(\V_{\bfu,\bfv,\bfn,k})\leq m+1.$ We will look at a geometric description of this stratification. For $0\leq j\leq m,$
consider the algebraic set $V(\cF^{j}_{k,\bfn})\subset\big(\K^l\setminus Z)\times \K^n$ defined by the vanishing of
the elements of $\cF^{j}_{k,\bfn}.$ No further assumptions on the field $\K$ are required for the following statements.

\begin{theorem}\label{bg}
For $j=1,\ldots,m+1,\, (\bfu,\bfv)$ is such that $\dim_\K(\V_{\bfu,\bfv,\bfn,k})=j$ if and only if
$(\bfu,\bfv)\in V(\cF^{j-2}_{k,\bfn})\setminus V(\cF^{j-1}_{k,\bfn}).$
\end{theorem}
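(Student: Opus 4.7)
The plan is to express the kernel of $\M_{k-r-1,n-k-r,\bfn}(\bfu,\bfv)$ in terms of a minimal element of $\V_{\bfu,\bfv,\bfn,k}$, use this to compute $\dim_\K \ker$ as a function of $\dim_\K \V_{\bfu,\bfv,\bfn,k}$, and then translate this into vanishing of the minors defining $\cF^{j-2}_{k,\bfn}$ and $\cF^{j-1}_{k,\bfn}$.

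First I would observe that for any $r \geq 0$, the kernel of $\M_{k-r-1,n-k-r,\bfn}(\bfu,\bfv)$ consists of pairs $(A(x), B(x))$ satisfying the WHIP with $\deg A \leq k-r-1$ and $\deg B \leq n-k-r$; in particular every such pair belongs to $\V_{\bfu,\bfv,\bfn,k}$. By Theorem \ref{hipp}, each such pair can be written uniquely as $C(x)\cdot(A^0_{k-1}(x), B^0_{n-k}(x))$ with $C(x) \in \K[x]$ (uniqueness follows from the minimal element being nonzero, a consequence of Proposition \ref{trump}). The extra degree constraints coming from the smaller kernel force $\deg C \leq \min\{(k-r-1)-d_A,\,(n-k-r)-d_B\} = s_0 - r$, where $s_0 = \min\{k-1-d_A, n-k-d_B\}$. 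Hence
\begin{equation*}
\dim_\K \ker \M_{k-r-1,n-k-r,\bfn}(\bfu,\bfv) = \max\{s_0 - r + 1,\,0\}.
\end{equation*}

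Second, since $\M_{k-r-1,n-k-r,\bfn}(\bfu,\bfv)$ has $n$ rows and $n-2r+1$ columns, the ideal $\cF^r_{k,\bfn}$ of its $(n-2r)\times(n-2r)$ minors vanishes at $(\bfu,\bfv)$ precisely when the rank is at most $n-2r-1$, i.e.\ when $\dim_\K \ker \geq 2$. Substituting the displayed formula, this becomes $s_0 \geq r+1$, equivalently $\dim_\K \V_{\bfu,\bfv,\bfn,k} \geq r+2$ by Theorem \ref{hipp}. Thus $V(\cF^r_{k,\bfn}) = \{(\bfu,\bfv)\,:\,\dim_\K \V_{\bfu,\bfv,\bfn,k} \geq r+2\}$. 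Applying this with $r = j-2$ and $r = j-1$, together with the convention $V(\cF^{-1}_{k,\bfn}) = (\K^l\setminus Z)\times\K^n$ (which matches $\{\dim_\K \V \geq 1\}$ since $\dim_\K \V \geq 1$ always), yields
\begin{equation*}
V(\cF^{j-2}_{k,\bfn}) \setminus V(\cF^{j-1}_{k,\bfn}) = \{(\bfu,\bfv)\,:\,\dim_\K \V_{\bfu,\bfv,\bfn,k} = j\},
\end{equation*}
which is exactly the statement of the theorem.

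The delicate point is the bookkeeping of sizes: $\M_{k-r-1,n-k-r,\bfn}$ has $n-2r+1$ columns while $\cF^r_{k,\bfn}$ is cut out by minors of size $n-2r$, so minor-vanishing is equivalent to kernel dimension $\geq 2$ rather than $\geq 1$. This off-by-one is precisely what produces the pairing of $V(\cF^{j-2})$ with $V(\cF^{j-1})$ to capture the layer $\dim_\K \V = j$, instead of $V(\cF^{j-1})$ with $V(\cF^j)$. Degenerate situations where $A^0_{k-1}$ or $B^0_{n-k}$ vanishes identically cause no trouble provided one adopts the convention $\deg 0 = -\infty$, under which the formula $\dim_\K \ker = \max\{s_0 - r + 1,\,0\}$ remains valid.
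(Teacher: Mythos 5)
Your proof is correct and takes essentially the same route as the paper's: both use Theorem \ref{hipp} to identify $\ker\M_{k-r-1,n-k-r,\bfn}(\bfu,\bfv)$ with the multiples $C(x)\cdot(A^0,B^0)$ of the minimal element with $\deg C\leq s_0-r$, and both translate vanishing of the $(n-2r)\times(n-2r)$ minors of the $n\times(n-2r+1)$ matrix into $\dim_\K\ker\geq 2$. You package the bookkeeping as a single closed formula $\dim_\K\ker=\max\{s_0-r+1,0\}$ rather than the paper's case-by-case verification, but the underlying mechanism is identical.
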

\begin{proof}
Recall from Proposition \ref{trump} that $\V_{\bfu,\bfv,\bfn,k}$ can be identified
with $\mbox{\rm ker}(\M_{\bfu,\bfv,\bfn,k}).$ Its dimension equal to one if and only if this matrix has maximal rank, which is equivalent
to $(\bfu,\bfv)\notin V(\cF^0_{k,\bfn}).$ This proves the claim for $j=1.$
\par
Suppose now that $\dim_\K(\V_{\bfu,\bfv,\bfn,k})=j+1,$ with $j>0.$ It is easy to see that the $(n-2j+1)$-th dimensional vector of the
coefficients of the minimal solution $(A^0_{k-1}(x),B^0_{n-k}(x))$ actually belongs to $\mbox{\rm ker}(\M_{k-j-1,n-k-j,\bfn}(\bfu,\bfv)),$ and
hence the coefficients of both $(A^0_{k-1}(x),B^0_{n-k}(x))$ and $(xA^0_{k-1}(x),xB^0_{n-k}(x))$ belong to the kernel
of $\M_{k-(j-1)-1,n-k-(j-1),\bfn}(\bfu,\bfv),$ which implies that the dimension of the kernel of this matrix is larger than one, and
hence we deduce that $(\bfu,\bfv)\in  V(\cF^{j-1}_{k,\bfn}).$ If in addition we had  $(\bfu,\bfv)\in V(\cF^{j}_{k,\bfn}),$ then there
would be another element in  $\mbox{\rm ker}(\M_{k-j-1,n-k-j,\bfn}(\bfu,\bfv))$ linearly independent with the vector of
coefficients of $(A^0_{k-1}(x),B^0_{n-k}(x)),$ and encoding a pair of polynomials of degrees bounded by $k-j-1$ and $n-k-j$ respectively,
which also belongs to $\V_{\bfu,\bfv,\bfn,k}.$ As we have that either $\deg(A^0_{k-1}(x))=k-j-1$ or $\deg(B^0_{n-k}(x))=n-k-j,$ we
conclude that this other element would then be a scalar  multiple of $(A^0_{k-1}(x),B^0_{n-k}(x))$, which is a contradiction.

\par Reciprocally, if $(\bfu,\bfv)\in V(\cF^{j-1}_{k,\bfn}),$ then  $$\dim_\K(\mbox{\rm ker}(\M_{k-(j-1)-1,n-k-(j-1),\bfn}(\bfu,\bfv))\geq2.$$
As all the polynomials $(A(x),B(x))$ coming from the coordinates of elements in this kernel belong to $\V_{\bfu,\bfv,\bfn,k},$ we deduce
that the minimal solution of this space is of the form $(A^0_{k-1}(x), B^0_{n-k}(x))$ of degrees bounded by $k-j-1$ and $n-k-j$ respectively. Note that this vector also produces an element in $\mbox{\rm ker}(\M_{k-j-1,n-k-j,\bfn}(\bfu,\bfv)).$ If in addition $(\bfu,\bfv)\notin V(\cF^j_{k,\bfn}),$ then the coordinates of $(A^0_{k-1}(x), B^0_{n-k}(x))$ must be the only  -up to a constant- element of this kernel, which implies straightforwardly that either $k-j-1=\deg(A^0_{k-1}(x)),$ or $n-k-j=\deg(B^0_{n-k}(x)).$ The fact that $\dim_\K(\V_{\bfu,\bfv,\bfn,k})=j+1$ follows now straightforwardly from Theorem \ref{hipp}.
\end{proof}
\smallskip
From the proof of Theorem \ref{bg}, we deduce immediately the following characterizations.
\begin{corollary}\label{fp}
For $j=0,\ldots, m, V(\cF^{j-1}_{k,\bfn})$ is equal to set of all $(\bfu,\bfv)\in(\K^l\setminus Z)\times\K^n$ such that the minimal
solution of $\V_{\bfu,\bfv,\bfn,k}$ has degrees bounded by $k-1-j$ and $n-k-j$ respectively.
\end{corollary}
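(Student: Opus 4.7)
The corollary follows by stringing together Theorem \ref{bg} and Theorem \ref{hipp}, so my plan is essentially a two-step bookkeeping argument.

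First, I would rewrite $V(\cF^{j-1}_{k,\bfn})$ as a sublevel set for the dimension of $\V_{\bfu,\bfv,\bfn,k}$. By Lemma \ref{filt}, the chain $\cF^{-1}_{k,\bfn}\subset\cF^0_{k,\bfn}\subset\cdots\subset\cF^m_{k,\bfn}$ gives a descending chain of zero sets $V(\cF^{-1}_{k,\bfn})\supset V(\cF^0_{k,\bfn})\supset\cdots\supset V(\cF^m_{k,\bfn})$, and Theorem \ref{bg} realises each successive difference $V(\cF^{i-2}_{k,\bfn})\setminus V(\cF^{i-1}_{k,\bfn})$ as the stratum $\{\dim_\K\V_{\bfu,\bfv,\bfn,k}=i\}$. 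Summing (or equivalently, a one-line induction on $j$ starting from the trivial case $V(\cF^{-1}_{k,\bfn})=(\K^l\setminus Z)\times\K^n$) gives
\[
V(\cF^{j-1}_{k,\bfn})=\{(\bfu,\bfv)\in(\K^l\setminus Z)\times\K^n:\dim_\K\V_{\bfu,\bfv,\bfn,k}\geq j+1\}.
\]

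Second, I would translate the dimension condition into degree bounds on the minimal solution using Theorem \ref{hipp}. That theorem asserts $\dim_\K\V_{\bfu,\bfv,\bfn,k}=s_0+1$, where $s_0=\min\{k-1-d_A,\,n-k-d_B\}$ and $d_A,d_B$ denote the degrees of the components of the minimal element $(A^0_{k-1}(x),B^0_{n-k}(x))$. Therefore $\dim_\K\V_{\bfu,\bfv,\bfn,k}\geq j+1$ if and only if $s_0\geq j$, which is equivalent to $d_A\leq k-1-j$ and $d_B\leq n-k-j$ simultaneously. Combining with the previous step yields exactly the characterisation claimed.

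The $j=0$ case is a sanity check: $\cF^{-1}_{k,\bfn}=\langle 0\rangle$ so $V(\cF^{-1}_{k,\bfn})$ is the whole parameter space, and the degree bounds $\deg(A^0_{k-1})\leq k-1$, $\deg(B^0_{n-k})\leq n-k$ are automatic by definition of $\V_{\bfu,\bfv,\bfn,k}$. There is no real obstacle here: the only thing to be careful about is the indexing shift between the filtration step $j$ in the ideals $\cF^j_{k,\bfn}$, the dimension step in Theorem \ref{bg}, and the degree-drop step $j$ in the statement; once these are aligned as above, the proof is a two-line deduction.
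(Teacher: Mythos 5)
Your proof is correct and follows essentially the same logic as the paper: the corollary is stated as an immediate consequence of the proof of Theorem \ref{bg} (which already contains the translation between membership in $V(\cF^{j-1}_{k,\bfn})$, the rank of $\M_{k-j,n-k-j+1,\bfn}(\bfu,\bfv)$, and the degrees of the minimal element), and your two-step version — first converting $V(\cF^{j-1}_{k,\bfn})$ to the sublevel set $\{\dim_\K\V_{\bfu,\bfv,\bfn,k}\geq j+1\}$ via the filtration from Lemma \ref{filt} and the stratification of Theorem \ref{bg}, then using $\dim_\K\V_{\bfu,\bfv,\bfn,k}=s_0+1$ from Theorem \ref{hipp} to decode the dimension inequality as the pair of degree bounds — is an accurate and complete way to spell out that deduction.
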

\begin{corollary}\label{ox}
For $j=0,\ldots, m,$
$$(\bfu,\bfv)\in V(\cF^{j-1}_{k,\bfn})\setminus V(\cF^{j}_{k,\bfn})\iff\dim_\K\left(\mbox{\rm ker}\big(\M_{k-j-1,n-k-j,\bfn}(\bfu,\bfv)\big)\right)=1.$$
\end{corollary}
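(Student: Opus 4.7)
The plan is to deduce Corollary \ref{ox} as a direct consequence of Theorem \ref{bg} together with the characterization of $V(\cF^{j-1}_{k,\bfn})$ coming from Corollary \ref{fp}, converting the statement about containments in varieties into a statement about the kernel dimension of the intermediate-size matrix $\M_{k-j-1,n-k-j,\bfn}(\bfu,\bfv)$. Both implications are short once the right degree-bookkeeping is set up.

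For the forward implication, applying Theorem \ref{bg} with $j+1$ in place of $j$ gives that the hypothesis $(\bfu,\bfv)\in V(\cF^{j-1}_{k,\bfn})\setminus V(\cF^j_{k,\bfn})$ is equivalent to $\dim_\K(\V_{\bfu,\bfv,\bfn,k})=j+1$. Corollary \ref{fp} then says the minimal pair $(A^0_{k-1}(x),B^0_{n-k}(x))$ satisfies $\deg A^0_{k-1}\leq k-1-j$ and $\deg B^0_{n-k}\leq n-k-j$, and the argument inside the proof of Theorem \ref{bg} additionally shows that at least one of these two bounds is attained. Thus the coefficient vector of $(A^0_{k-1},B^0_{n-k})$ embeds as a nonzero element of $\ker\M_{k-j-1,n-k-j,\bfn}(\bfu,\bfv)$. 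By Proposition \ref{2.5} every other element of $\V_{\bfu,\bfv,\bfn,k}$ has the form $(C(x)A^0_{k-1}(x),C(x)B^0_{n-k}(x))$, and for such a pair to still fit within the degree windows $k-1-j$ and $n-k-j$ the polynomial $C(x)$ must be constant, because multiplying by $x$ would exceed the saturated bound. Consequently the kernel is one-dimensional.

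For the converse, assume $\dim_\K\ker\M_{k-j-1,n-k-j,\bfn}(\bfu,\bfv)=1$. Reading polynomials off a nonzero kernel element produces a pair $(A,B)\in\V_{\bfu,\bfv,\bfn,k}$ with $\deg A\leq k-1-j$ and $\deg B\leq n-k-j$, so Corollary \ref{fp} places $(\bfu,\bfv)\in V(\cF^{j-1}_{k,\bfn})$. If in addition we had $(\bfu,\bfv)\in V(\cF^j_{k,\bfn})$, then by Corollary \ref{fp} again the minimal solution would have strictly smaller degree bounds $k-2-j$ and $n-k-j-1$, so both $(A^0_{k-1},B^0_{n-k})$ and $(xA^0_{k-1},xB^0_{n-k})$ would furnish linearly independent vectors in $\ker\M_{k-j-1,n-k-j,\bfn}(\bfu,\bfv)$, contradicting one-dimensionality.

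The only delicate point — and the main obstacle in the argument — is tracking which of the two degree bounds is actually saturated by the minimal solution in the forward direction, since this saturation is precisely what forbids nontrivial polynomial multiples of $(A^0_{k-1},B^0_{n-k})$ from remaining inside the prescribed degree window. Everything else is essentially bookkeeping on top of Theorem \ref{bg}, Corollary \ref{fp}, and Proposition \ref{2.5}.
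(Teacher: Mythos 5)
Your proof is correct and follows essentially the same route as the paper: the authors derive Corollary~\ref{ox} as an immediate byproduct of the argument in Theorem~\ref{bg}, which is exactly the combination of Corollary~\ref{fp}, Proposition~\ref{2.5}, and the degree-saturation observation that you spell out.
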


\medskip
To study irreducibility and rationality of our objects of interest, we will need to use the language of  and tools from Algebraic Geometry. So we will
assume for the rest of this section that our field $\K$ is contained in an algebraically closed field $K$, which is where our
geometric statements will take place. Given $\alpha,\,\beta\in\Z_{\geq0},$ $\bfn=(n_{1},\ldots,n_{l})\in \Z_{>0}$ let
$W_{\bfn,\alpha,\beta}\subset\left(K^l\setminus Z\right)\times K^n\times\P^{\alpha+\beta+1}(K)$ be the incidence variety given by
\begin{equation}\label{incid}
\begin{array}{c}
(\bfu,\bfv,a_0:\ldots:a_{\alpha}:b_0:\ldots :b_{\beta})\in W_{\bfn,\alpha,\beta}
\\
\iff \\
A^{(j)}(u_i)=\sum_{t=0}^{j}(j)_t\, v_{i,t}\,B^{(j-t)}(u_i),\,i=1,\ldots, l,\,j=0,\ldots,n_{i}-1;
\end{array}
\end{equation}
where $A(x)$ and $B(x)$ are defined as
\begin{equation}\label{ND}
A(x)=\sum_{\ell=0}^\alpha a_\ell x^\ell,\,B(x)=\sum_{\ell=0}^\beta b_\ell x^\ell.
\end{equation}

\begin{theorem}\label{osu}
$W_{\bfn,\alpha,\beta}$ is an irreducible variety of dimension $l+\alpha+\beta+1,$ defined over $\K$.
\end{theorem}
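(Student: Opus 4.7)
My plan is to realize $W_{\bfn,\alpha,\beta}$ as the closure of the graph of a rational section over an open set in an irreducible base. Consider the projection
\[
\pi:W_{\bfn,\alpha,\beta}\longrightarrow Y:=(K^l\setminus Z)\times\P^{\alpha+\beta+1}(K)
\]
forgetting the $\bfv$-coordinates; the target $Y$ is irreducible of dimension $l+\alpha+\beta+1$ and defined over $\K$. On the open subset $U\subset Y$ where $B(u_i)\ne0$ for every $i$, the equations \eqref{incid}, viewed as a system in $\bfv$, split into $l$ independent $n_i\times n_i$ lower-triangular blocks whose diagonal entries are $j!\,B(u_i)$, all nonzero thanks to the hypothesis on Char$(\K)$. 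Hence $\bfv$ is uniquely determined on $U$ by rational functions of $(\bfu,a,b)$ that are regular on $U$, giving a $\K$-morphism $\sigma:U\to W_{\bfn,\alpha,\beta}$ with $\pi\circ\sigma=\mathrm{id}_U$. Thus $\pi^{-1}(U)\cong U$ is irreducible of dimension $l+\alpha+\beta+1$, defined over $\K$, and so is its closure $W_0:=\overline{\pi^{-1}(U)}\subseteq W_{\bfn,\alpha,\beta}$.

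What remains is to prove $W_{\bfn,\alpha,\beta}=W_0$. Because $W_{\bfn,\alpha,\beta}$ is cut out by the $n$ polynomial equations of \eqref{incid} in an ambient space of dimension $l+n+\alpha+\beta+1$, every component has dimension at least $l+\alpha+\beta+1$, so it suffices to prove the strict bound $\dim\pi^{-1}(Y\setminus U)<l+\alpha+\beta+1$. For this I stratify $Y\setminus U$ by the capped multiplicity profile $r_i:=\min(\mathrm{mult}(B,u_i),n_i)$ with $R:=\sum_i r_i\ge1$. On such a stratum, the $i$-th block decomposes into $r_i$ "top" rows whose $\bfv$-coefficients all vanish and therefore impose $A^{(s)}(u_i)=0$ for $s<r_i$, together with $n_i-r_i$ "bottom" rows forming an invertible triangular subsystem in $v_{i,0},\dots,v_{i,n_i-r_i-1}$ (its diagonal involves $B^{(r_i)}(u_i)\ne0$), so that the top $r_i$ entries of $\bfv_i$ are free.

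Because the $u_i$ are distinct, the $R$ vanishing conditions on $B$ and the $R$ on $A$ are linearly independent on the respective coefficient vectors, so the base stratum has codimension $\min(R,\beta+1)+\min(R,\alpha+1)$ in $Y$, while the fibre of $\pi$ over it is affine of dimension $R$. This yields
\[
\dim\pi^{-1}(\mathrm{stratum})\ \le\ (l+\alpha+\beta+1)-\min(R,\beta+1)-\min(R,\alpha+1)+R,
\]
and a short case check shows that $\min(R,\beta+1)+\min(R,\alpha+1)>R$ for every $R\ge1$; moreover, if $R\ge\alpha+1$ and $R\ge\beta+1$ simultaneously, the projective conditions force $(a,b)=0$ and the stratum is in fact empty. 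Taking the union over all profiles gives $\dim\pi^{-1}(Y\setminus U)<l+\alpha+\beta+1$, so $W_0$ is the unique component of maximal dimension and, by the complete-intersection lower bound, the only component at all. Therefore $W_{\bfn,\alpha,\beta}=W_0$ is irreducible of dimension $l+\alpha+\beta+1$ and defined over $\K$.

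The main obstacle I expect is making the stratification bookkeeping watertight: in particular, handling the degenerate regimes where the $A$- or $B$-vanishing conditions force $a=0$ or $b=0$, so that one is careful not to over- or under-count codimension in $\mathbb{P}^{\alpha+\beta+1}$. As sketched above, these edge cases either make the stratum empty or still contribute strictly smaller dimension, but spelling out the inequalities cleanly is the delicate point.
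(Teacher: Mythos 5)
Your proof is correct, but it takes a genuinely different route from the paper's. The paper's argument is inductive on $n=|\bfn|$: after identifying a prime ideal $J$ (the saturation $I:\langle\prod_i\bfB(U_i)\rangle^\infty$, realized as a kernel of a ring map), it shows $V(I)\cap V(\bfB(U_1))$ is, up to a couple of free coordinates, the incidence variety $W$ with the parameters $(\bfn,\alpha,\beta)$ each decremented, and then invokes the induction hypothesis to conclude that this locus is too small to contribute a new component. You avoid induction entirely: you project instead to $Y=(K^l\setminus Z)\times\P^{\alpha+\beta+1}$, observe that $\pi^{-1}(U)\cong U$ over the open set $U$ where $B(u_i)\ne 0$ (homogeneity of degree $0$ in $(a,b)$ makes the section well-defined on $\P^{\alpha+\beta+1}$, which you should perhaps spell out), and then bound $\dim\pi^{-1}(Y\setminus U)$ directly by stratifying by the capped multiplicity profile $(r_1,\dots,r_l)$. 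Your fiber/base bookkeeping is sound: the fiber over a point with profile $R=\sum r_i$ has dimension $R$, and the sub-stratum over which the fiber is non-empty has codimension $\min(R,\alpha+1)+\min(R,\beta+1)$ in $Y$ (linear independence of the $A$- and $B$-vanishing conditions at distinct $u_i$'s), and the inequality $\min(R,\alpha+1)+\min(R,\beta+1)>R$ together with the Krull lower bound on component dimension finishes the job. The paper's induction is tidier in that it sidesteps the combinatorics of multiplicity profiles and the attendant edge cases (you correctly flag these as the delicate point), while your argument is more self-contained, makes the codimension of the bad locus explicit without recursion, and is closer in spirit to a Fubini-type dimension count.
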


\begin{proof}
The proof will be done by induction on $n=|\bfn|,$ the initial case being obvious, as $W_{1,\alpha,\beta}$ is an irreducible hypersurface defined by one polynomial with coefficients in $\K\subset K.$ Suppose then $n>1,$ Let $A_i,\,B_j,\, U_k,$ and $V_{k,t}$ with $0\leq i\leq\alpha,\,0\leq j\leq\beta, 1\leq k\leq l,\, 0\leq t\leq n_k-1,$ be distinct indeterminates over $K.$ We will work in the localized ring
$${\rm R}:=K[U_1,\ldots, U_l, V_{1,0},\ldots, V_{l,n_l-1}, A_0,\ldots, A_{\alpha},B_0,\ldots, B_{\beta}]_{\prod_{1\leq i<j\leq n}(U_i-U_j)}.$$
Set ${\bf A}(x):=\sum_{\ell=0}^\alpha A_\ell x^\ell,$ and ${\bf B}(x):=\sum_{\ell=0}^\beta B_\ell x^\ell,$ which are elements in $R[x].$ 
Denote with $I$ the ideal of ${\rm R}$ defined by  
$${\bf A}^{(j)}(U_i)-\sum_{t=0}^{j}(j)_t\, V_{i,t}\,{\bf B}^{(j-t)}(U_i),\,i=1,\ldots, l,\,j=0,\ldots,n_{i}-1.
$$
Note that $W_{\bfn,\alpha,\beta}=V(I).$ Let $J\subset {\rm R}$ be the kernel of the  map
\begin{equation}\label{mapa}
\begin{array}{ccl}
{\rm R}&\to&K(U_1,\ldots, U_l,A_0,\ldots, A_{\alpha},B_0,\ldots, B_{\beta})\\
A_j&\mapsto &A_j,\,j=0,\ldots, \alpha\\
B_k&\mapsto &B_k,\,k=0,\ldots, \beta\\
U_i&\mapsto& U_i,\,i=1,\ldots, l\\
V_{i,t}&\mapsto &\frac{1}{t!}\big(\frac{\bf A}{\bf B}\big)^{(t)}(U_i),\,t=0,\ldots, n_i-1.
\end{array}
\end{equation}

From its definition, we deduce straightforwardly that  $J$ is a prime ideal, and homogeneous in the variables $A_i,\,B_j.$ It is also clear that  $V(J)\subset\left(K^l\setminus Z\right)\times K^n\times\P^{\alpha+\beta+1}(K)$ is irreducible of dimension $l+\alpha+\beta+1$. In addition, we have
$$ I\subset J=I:\langle\prod_{i=1}^l \bfB(U_i)\rangle^N
$$
for a suitable $N\in\N,$
and hence we deduce that
$ V(I:\langle\prod_{i=1}^l\bfB(U_i)\rangle^N)=V(J)\subset V(I).$ In particular, the dimension of $V(I)$ is at least $l+\alpha+\beta+1.$
As
$$V(I:\langle\prod_{i=1}^l\bfB(U_i)\rangle^N)=\overline{V(I)\setminus V(\prod_{i=1}^l\bfB(U_i))}=
\overline{V(I)\setminus \cup_{i=1}^lV(\bfB(U_i)})=\overline{\cap_{i=1}^lV(I)\setminus V(\bfB(U_i))}$$
(the first equality follows because $K$ is algebraically closed, see for instance \cite[Section 4, Theorem 7]{CLO15}), the claim will hold if we show that the dimension of $V(I)\cap V(\bfB(U_i))$ is strictly smaller than $l+\alpha+\beta+1$ for some $i=1,\ldots, l.$ We will actually show that this will happen for all $i.$ Indeed, suppose w.l.o.g. that $i=1,$ and set $I_1:=I+\langle \bfB(U_1)\rangle.$ We clearly have that  $V(I_1)=V(I)\cap V(\bfB(U_1)),$ and moreover
$(\bfu,\bfv,a_0:\ldots:a_{\alpha}:b_0:\ldots:b_{\beta})\in V(I_1)$ if and only if $(x-u_1)$ divides both $A(x)$ and $B(x),$ and by setting $A(x)=(x-u_1)\tilde{A}(x)$ and $B(x)=(x-u_1)\tilde{B}(x),$ we have that
\begin{equation}\label{oski}
\tilde{A}^{(j)}(u_i)=\sum_{t=0}^{j}(j)_t\, V_{i,t}\,\tilde{B}^{(j-t)}(u_i),\,i=2,\ldots, l,\,j=0,\ldots,n_{i}-1,  \end{equation} 
and, if  $n_1\geq2,$
\begin{equation}\label{dtt}
{\tilde{A}}^{(j)}(u_1)=\sum_{t=0}^{j}(j)_t\, V_{1,t}\,\tilde{ B}^{(j-t)}(u_1),\,j=0,\ldots,n_{1}-2.
\end{equation}
If either $\alpha$ or $\beta$ is equal to zero, these conditions imply  $A(x)=B(x)=0,$ and hence $V(I_1)=\emptyset,$ so the claim follows for this case straightforwardly. Otherwise, we have
\begin{equation}\label{v}
V(I_1) \simeq \left( K^2\times W_{n_2,\ldots, n_l,\alpha-1,\beta-1}\right)\setminus (Z\times K^n)
\end{equation}
if  $n_1=1$ (as $u_1$ and $v_{1,0}$ can be chosen arbitrarily), or  
\begin{equation}\label{vi}
V(I_1)\simeq \left( K\times W_{n_1-1,n_2,\ldots, n_l,\alpha-1,\beta-1}\right)\setminus (Z\times K^n)
\end{equation} 
as the only ``choice'' here is given by $v_{1,n_1-1}.$

By the Induction Hypothesis, in both cases we have that  the dimension of these varieties is $l+\alpha+\beta,$
which is strictly smaller than $\dim(V(I)).$
This completes the proof of the Theorem.
\end{proof}

Let $\pi:W_{{\bfn},\alpha,\beta}\to\left(K^l\setminus Z\right)\times K^n$ be the projection onto the first factor. By the Elimination Theorem, we have that $\pi(W_{{\bfn},\alpha,\beta})$ is an irreducible variety, also defined over $\K$.

\begin{theorem}\label{pret}
With notation as above, $\pi(W_{\bfn,\alpha,\beta})$ is a rational variety of dimension $l+\min\{n, \alpha+\beta+1\},$ defined over $\K$.
\end{theorem}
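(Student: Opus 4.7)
The plan is to compare the projection $\pi$ with a simple rational parametrization of $W_{\bfn,\alpha,\beta}$, splitting into the two cases $\alpha+\beta+1\geq n$ and $\alpha+\beta+1<n$. By Theorem \ref{osu}, $W_{\bfn,\alpha,\beta}$ is irreducible of dimension $l+\alpha+\beta+1$, defined over $\K$, and from the incidence relations \eqref{incid} the fiber $\pi^{-1}(\bfu,\bfv)$ is the projectivization of the nullspace of $\M_{\alpha,\beta,\bfn}(\bfu,\bfv)$, which has $n$ rows and $\alpha+\beta+2$ columns.

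In the case $\alpha+\beta+1\geq n$, this matrix has strictly more columns than rows, so its nullspace is nontrivial for every $(\bfu,\bfv)$. Hence $\pi$ is surjective and $\pi(W_{\bfn,\alpha,\beta})=(K^l\setminus Z)\times K^n$, which is rational over $\K$ of dimension $l+n=l+\min\{n,\alpha+\beta+1\}$.

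In the case $\alpha+\beta+1<n$, the plan is to produce a $\K$-birational equivalence between $\pi(W_{\bfn,\alpha,\beta})$ and the manifestly rational variety $X:=(K^l\setminus Z)\times\P^{\alpha+\beta+1}$ of dimension $l+\alpha+\beta+1$. Consider the rational map $\phi\colon X\to(K^l\setminus Z)\times K^n$ defined on the open set $\{B(u_i)\neq 0\text{ for all }i\}$ by $(\bfu,(a_0{:}\cdots{:}b_\beta))\mapsto(\bfu,\bfv)$, where $A(x),B(x)$ are as in \eqref{ND} and $v_{i,t}=\frac{1}{t!}(A/B)^{(t)}(u_i)$. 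This open subset is nonempty by Proposition \ref{B(ui)}, and by construction $\phi(X)\subseteq\pi(W_{\bfn,\alpha,\beta})$; moreover, $\phi$ factors as $\pi$ composed with the natural rational section of the second projection $W_{\bfn,\alpha,\beta}\to X$, which is dominant onto the irreducible variety $W_{\bfn,\alpha,\beta}$, so $\overline{\phi(X)}=\pi(W_{\bfn,\alpha,\beta})$. To conclude, I would verify that $\phi$ is generically injective: two points $(\bfu,(A{:}B))$ and $(\bfu',(A'{:}B'))$ of $X$ have the same image precisely when $\bfu=\bfu'$ and $F(x):=\prod_{i=1}^l(x-u_i)^{n_i}$ divides $AB'-A'B$. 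Since $\deg(AB'-A'B)\leq\alpha+\beta<n=\deg F$, this forces $AB'=A'B$, i.e., $A/B=A'/B'$ in $K(x)$. On the Zariski-open subset of $\P^{\alpha+\beta+1}$ where $\gcd(A,B)=1$ with $\deg A=\alpha$ and $\deg B=\beta$, the only $(A',B')$ with the same ratio and degrees bounded by $\alpha,\beta$ is a nonzero scalar multiple of $(A,B)$. Hence the generic fiber of $\phi$ is a single point, $\phi$ is birational onto $\pi(W_{\bfn,\alpha,\beta})$, and the latter is rational over $\K$ of dimension $l+\alpha+\beta+1$. The main obstacle is this generic injectivity argument, which rests crucially on the strict inequality $\alpha+\beta<n$.
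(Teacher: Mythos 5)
Your proof is correct, but it takes a genuinely different route from the paper's in the interesting case $\alpha+\beta+1<n$. The paper parametrizes $\pi(W_{\bfn,\alpha,\beta})$ by $\big(K^l\setminus Z\big)\times K^{\alpha+\beta+1}$: it chooses a sub-multiplicity vector $\overline{\bfn}$ with $|\overline{\bfn}|=\alpha+\beta+1$, takes the corresponding sub-block of derivative data as free coordinates, produces $A$ and $B$ from the signed maximal minors of $\M_{\alpha,\beta,\overline{\bfn}}$, and reconstructs the remaining $v_{i,t}$ via $(A/B)^{(t)}(u_i)$; it therefore needs Proposition \ref{ufi} for generic maximal rank and Proposition \ref{B(ui)} to ensure the specific $B$ so constructed does not vanish at the $u_i$'s. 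You instead parametrize directly by $X=(K^l\setminus Z)\times\P^{\alpha+\beta+1}$, the coefficients of $(A,B)$ themselves, so that regularity of $\phi$ on a dense open set is immediate (the locus $B(u_i)\neq0$ for all $i$ is the complement of a finite union of proper hypersurfaces; your citation of Proposition \ref{B(ui)} here is not actually needed and is a minor over-citation). The price you pay is that you must supply a generic-injectivity argument, but this is an elementary and transparent degree count: $F=\prod(x-u_i)^{n_i}$ divides $AB'-A'B$, which has degree $\leq\alpha+\beta<n=\deg F$, forcing $AB'=A'B$, and coprimality of $(A,B)$ on a dense open set then gives proportionality. Both arguments ultimately rest on Theorem \ref{osu}. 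Your version is slightly more self-contained and avoids the Hilbert--Burch/maximal-minor machinery, while the paper's version exhibits a parametrization by the very data $(\bfu,\bfv)$, which fits more directly with the matrix-based computations used in the rest of the paper.
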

\begin{proof}
Thanks to Theorem \ref{osu}, we have 
$$\dim\big(\pi(W_{\bfn,\alpha,\beta})\big)\leq l+\alpha+\beta+1=\dim(W_{\bfn,\alpha,\beta}).$$  
Let $\M_{\alpha,\beta,\bfn}(\bfU,\bfV)$ be the  $n\times(\alpha+\beta+2)$ matrix defined in \eqref{symbMM}.
It is straightforward to check that
\begin{equation}\label{axx}
(\bfu, \bfv)\in\pi(W_{\bfn,\alpha,\beta})\iff \dim_K\big(\mbox{\rm ker}(\M_{\alpha,\beta,\bfn}(\bfu,\bfv))\big)\geq1.
\end{equation}
If $n<\alpha+\beta+2$ the right hand side of \eqref{axx}  holds straightforwardly, hence  $\pi(W_{\bfn,\alpha,\beta})$ is the whole space $\big(K^l\setminus Z\big)\times K^n.$ So, the claim holds for this case. Otherwise, we will have that  the rank of $\M_{\alpha,\beta, \bfn}(\bfu, \bfv)$ will be less than or equal to $\alpha+\beta+1$ for those $(\bfu,\bfv)$ satisfying \eqref{axx}. 
Let $\overline{\bfn}:=(\overline{n}_1,\ldots, \overline{n}_l)\in\N^l$ be such that $\overline{n}_i\leq n_i$ for $i=1,\ldots, l,$ and $\overline{n}:=\sum_{i=1}^l\overline{n}_i=\alpha+\beta+1<n.$

Thanks to Proposition \ref{ufi} applied to the field $K,$ the matrix $\M_{\alpha,\beta,\overline{\bfn}}(\bfU,\bfV)$ generically has maximal rank. Let $\cU_0\subset\big(K^l\setminus Z\big)\times K^{\alpha+\beta+1}$ be the nonempty open set consisting of all the data $(\bfu,\bfv)$ such that $\M_{\alpha,\beta,\overline\bfn}(\bfu,\bfv)$  has maximal rank. We set now the following rational map
\begin{equation}\label{mmap}
\begin{array}{ccc}
\cU_0& \dashrightarrow &\pi(W_{\bfn,\alpha,\beta}) \\
(\bfu,\bfv)&\mapsto& \big(\bfu,\bfv, \frac{1}{t!}\big(\frac{A}{B}\big)^{(t)}(u_i),\,1\leq i\leq l,\,\overline{n}_i\leq t<n_i\big),
\end{array}
\end{equation}
where the coefficients of the polynomials $A(x)$ and $B(x)$ are extracted from the signed maximal minors of  $\M_{\alpha,\beta,\overline{\bfn}}(\bfu,\bfv).$  The image of this map is clearly contained in $\pi(W_{\bfn,\alpha,\beta}),$ and the first coordinates of the map define an inverse, so it will be birational with an open subset of $\pi(W_{\bfn,\alpha,\beta})$ provided that we can show that it is regular in a nonempty open subset of $\cU_0.$ But for this to happen, we need that $\prod_{\overline{n_i}<n_i}^nB(u_i)\neq0.$ This holds thanks to Proposition \ref{B(ui)}, hence the map \eqref{mmap} is regular in $\cU_0\cap\{\prod_{\overline{n_i}<n_i}^nB(u_i)\neq0\},$ a  nonzero empty set of $\big(K^l\setminus Z\big)\times K^{\alpha+\beta+1}.$ This concludes with the proof of the Theorem.
\end{proof}
\smallskip
Next result will help us characterize the set of unattainable points for the RHIP.
\begin{corollary}\label{26}

With notation as in the proof of Theorem \ref{osu}, if  $\alpha=0$ or $\beta=0,$  $\pi(V(I_1))=\emptyset,$  otherwise it is a rational variety of dimension $l+\min\{n,\alpha+\beta\}.$
\end{corollary}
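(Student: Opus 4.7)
The plan is to reuse the structural isomorphisms \eqref{v} and \eqref{vi} established in the proof of Theorem \ref{osu}, combined with Theorem \ref{pret}. First, I would dispose of the edge cases $\alpha=0$ or $\beta=0$: the condition $\bfB(U_1)=0$, together with the equality $\bfA(U_1)=V_{1,0}\bfB(U_1)$ that comes from \eqref{incid} at $i=1,j=0$, forces $(x-u_1)$ to divide both $A(x)$ and $B(x)$. With $\deg A\leq 0$ or $\deg B\leq 0$ this is impossible unless $A(x)=B(x)\equiv 0$, which contradicts being a point of $\P^{\alpha+\beta+1}$; hence $V(I_1)=\emptyset$ and $\pi(V(I_1))=\emptyset$.

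For $\alpha,\beta\geq 1$, the proof of Theorem \ref{osu} already furnishes an explicit birational description of $V(I_1)$. Writing $A(x)=(x-u_1)\tilde{A}(x)$ and $B(x)=(x-u_1)\tilde{B}(x)$, the isomorphism \eqref{v} (when $n_1=1$) or \eqref{vi} (when $n_1\geq 2$) identifies $V(I_1)$ with the product of an affine space---accounting for the free input data $(u_1,v_{1,0})$ in the first case, and $v_{1,n_1-1}$ in the second---and a smaller incidence variety $W_{\bfn',\alpha-1,\beta-1}$, with $\bfn'=(n_2,\ldots,n_l)$ or $\bfn'=(n_1-1,n_2,\ldots,n_l)$ respectively, all minus the locus $Z\times K^n$. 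Under this identification, the projection $\pi$ restricted to $V(I_1)$ factors as the identity on the affine factor times the analogous projection on $W_{\bfn',\alpha-1,\beta-1}$, since $\pi$ only forgets the projective coordinates $(\tilde{A}:\tilde{B})$ and preserves all the base variables.

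Applying Theorem \ref{pret} to the smaller incidence variety then yields a rational image of dimension $(l-1)+\min\{n-1,\alpha+\beta-1\}$ in the case $n_1=1$ and $l+\min\{n-1,\alpha+\beta-1\}$ in the case $n_1\geq 2$. Adding the dimension of the free affine factor ($2$ and $1$ respectively), both sub-cases collapse to $l+\min\{n,\alpha+\beta\}$. Rationality is preserved since the product of an affine space with a rational variety is rational, and removing the proper closed subset $Z\times K^n$ from a rational variety leaves a rational variety of the same dimension.

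The main obstacle will be the careful bookkeeping of the projection $\pi$ under the isomorphisms \eqref{v} and \eqref{vi}, specifically verifying that the $(\bfu,\bfv)$-coordinates of $V(I_1)$ correspond, via the identification, to those of the base $(K^l\setminus Z)\times K^n$ with the appropriate affine and smaller-incidence factors. This is essentially a notational exercise given the explicit substitution $A\mapsto (x-u_1)\tilde{A}$, $B\mapsto(x-u_1)\tilde{B}$ already used in the proof of Theorem \ref{osu}, but it is the step that must be spelled out in order to legitimately transfer rationality from $\pi(W_{\bfn',\alpha-1,\beta-1})$ to $\pi(V(I_1))$ and thereby close the argument.
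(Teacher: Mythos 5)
Your main-case argument ($\alpha,\beta\geq1$) is exactly the paper's: apply the identifications \eqref{v} and \eqref{vi}, observe that $\pi$ restricted to $V(I_1)$ factors as identity on the affine factor times the analogous projection of the smaller incidence variety, and then invoke Theorem \ref{pret}. Your dimension count yields $l+\min\{n,\alpha+\beta\}$ in both sub-cases, and the ``bookkeeping'' you flag as the main obstacle does go through for the reason you give, since the substitution $A\mapsto(x-u_1)\tilde A$, $B\mapsto(x-u_1)\tilde B$ only alters the projective coordinates and leaves the $(\bfu,\bfv)$-coordinates untouched.

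Your edge-case step, however, is not sound, and in fact the blanket emptiness claim in Corollary \ref{26} is not quite correct when $\alpha=0$ (a slip the corollary inherits from the same assertion in the proof of Theorem \ref{osu}). Knowing that $(x-U_1)$ divides both $A$ and $B$ together with $\deg A\leq0$ forces only $A\equiv0$; it says nothing about $B$ when $\beta\geq1$, and the remaining Hermite equations become linear constraints relating $B$ and $\bfv$ rather than forcing $B\equiv0$. Concretely, with $l=1$, $n_1=2$, $\alpha=0$, $\beta=1$, the point $\bigl(u_1,0,v_{1,1};\,0:-u_1:1\bigr)$ lies in $V(I_1)$ for every $u_1,v_{1,1}\in K$, so $\pi(V(I_1))$ is the two-dimensional locus $\{v_{1,0}=0\}$, not empty (it still has dimension $l+\min\{n,\alpha+\beta\}=2$, as it happens). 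The symmetric case $\beta=0$ with $\alpha<n$ does work as you describe, because killing the constant $B$ forces $\prod_i(x-u_i)^{n_i}\mid A$ and hence $A\equiv0$. Since Corollary \ref{26} is used (in Theorem \ref{bp}) only with $\alpha,\beta\geq1$, this does not propagate into the main theorems, but your proof of the edge case, as well as the statement of the corollary, would need to be corrected for $\alpha=0$.
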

\begin{proof}
From \eqref{v} and \eqref{vi}, we deduce that if $\pi(V(I_1))$ is not empty, then it is birational to either $K^2\times\pi\left(  W_{n_2,\ldots, n_l,\alpha-1,\beta-1}\right)$ or $K\times \pi\left( W_{n_1-1,n_2,\ldots, n_l,\alpha-1,\beta-1}\right).$ The claim now follows straightforwardly by applying Theorem \ref{pret} to these varieties.
\end{proof}
\smallskip
For $i=1,\ldots, l,$ let $I_i:= I+\langle B(U_i)\rangle,$ and 
$\M^i_{\alpha,\beta,\bfn}(\bfU,\bfV)$ be the $(n-1)\times (\alpha+\beta+2)$ submatrix of $\M_{\alpha,\beta,\bfn}(\bfU,\bfV)$ made by removing the last row in the block containing the  $U_i$'s. 
\begin{proposition}
For $\alpha,\,\beta\geq1,\,i=1,\ldots, l,$ we have
$$(\bfu,\bfv)\in\pi(V(I_i))\cap (Z\times K^n)\iff\dim_K\big(\mbox{\rm ker}(\M^i_{\alpha-1,\beta-1,\bfn}(\bfu,\bfv))\big)\geq1.$$
\end{proposition}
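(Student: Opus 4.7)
The plan is to unpack $\pi(V(I_i))$ explicitly in terms of polynomial pairs, use the factor $(x-u_i)$ reduction from the proof of Theorem \ref{osu} to match with a smaller WHIP system, and then identify that smaller system's matrix with $\M^i_{\alpha-1,\beta-1,\bfn}(\bfu,\bfv)$.

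First, by the definition $V(I_i)=V(I+\langle B(U_i)\rangle)$ and the incidence description \eqref{incid}, a point $(\bfu,\bfv)$ lies in $\pi(V(I_i))$ if and only if there exist polynomials $A(x),B(x)\in K[x]$ of degrees at most $\alpha$ and $\beta$ respectively, not both zero, that satisfy the WHIP equations with data $(\bfu,\bfv,\bfn)$ and additionally $B(u_i)=0$. Combining $B(u_i)=0$ with the $j=0$ equation at $u_i$ forces $A(u_i)=v_{i,0}B(u_i)=0$. Hence $(x-u_i)$ divides both $A(x)$ and $B(x)$, so I can write $A(x)=(x-u_i)\tilde A(x)$ and $B(x)=(x-u_i)\tilde B(x)$ with $\deg\tilde A\le\alpha-1$ and $\deg\tilde B\le\beta-1$; the correspondence $(A,B)\leftrightarrow(\tilde A,\tilde B)$ is a $K$-linear isomorphism that preserves ``nonzero.''

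Next, I translate the remaining WHIP equations for $(A,B)$ into equations for $(\tilde A,\tilde B)$. For each $k\ne i$ and for $k=i$ with $0\le j\le n_i-2$, Leibniz' rule on $A^{(j)}=((x-u_i)\tilde A)^{(j)}$ and the analogous identity for $B$, followed by reorganizing the resulting sums using the Pochhammer identity $(j)_t(j-t)=(j)_{t+1}$ (as already carried out in the proof of Theorem \ref{osu} giving \eqref{oski} and \eqref{dtt}), shows that $(A,B)$ solves the WHIP for multiplicity vector $\bfn$ with $B(u_i)=0$ if and only if $(\tilde A, \tilde B)$ solves the WHIP for the modified multiplicity vector $\bfn^{(i)}:=(n_1,\ldots,n_i-1,\ldots,n_l)$ with the same data $(\bfu,\bfv)$.

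To finish, observe that the matrix of this reduced WHIP system is precisely $\M_{\alpha-1,\beta-1,\bfn^{(i)}}(\bfu,\bfv)$, and by construction this coincides with $\M^i_{\alpha-1,\beta-1,\bfn}(\bfu,\bfv)$: removing the last row of the $i$-th block of $\M_{\alpha-1,\beta-1,\bfn}$ simply discards the derivative condition at $u_i$ of order $j=n_i-1$, leaving the rows indexed by $j=0,\ldots,n_i-2$. Hence $(\bfu,\bfv)\in\pi(V(I_i))$ exactly when there is a nonzero $(\tilde A,\tilde B)$ in the kernel of $\M^i_{\alpha-1,\beta-1,\bfn}(\bfu,\bfv)$, which is the stated equivalence. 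The main technical point is the Leibniz/Pochhammer bookkeeping of step two; this is essentially already in the proof of Theorem \ref{osu}, and it is exactly where the characteristic hypothesis on $\K$ is used, to guarantee that the binomial factors arising can be cancelled.
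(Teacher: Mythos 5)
Your argument is correct and follows essentially the same route as the paper: both factor $(x-u_i)$ out of $A$ and $B$ (exactly the reduction carried out in the proof of Theorem \ref{osu} to arrive at \eqref{oski} and \eqref{dtt}) and then identify the matrix of that reduced linear system with $\M^i_{\alpha-1,\beta-1,\bfn}(\bfu,\bfv)$. You have also correctly used the indices $\alpha-1,\beta-1$ from the statement, whereas the paper's one-line proof contains a typo, writing $\M^i_{\alpha,\beta,\bfn}(\bfU,\bfV)$.
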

\begin{proof}
The matrix of the homogeneous linear system given by \eqref{oski} and \eqref{dtt} to compute the coefficients of $(\tilde{A},\tilde{B})$ is actually $\M^i_{\alpha,\beta,\bfn}(\bfU,\bfV).$ The claim now follows straightforwardly.
 \end{proof}

We now come back to the Rational Interpolation Problem.
\begin{theorem}\label{oteo}
For $j=0,\ldots,m+1,\, V(\cF^{j-1}_{k,\bfn})=\pi(W_{\bfn,k-j-1,n-k-j}).$  It is a rational irreducible variety defined over $\K,$ of codimension $2j$ if $j\leq m,$ or the empty set if $j=m+1.$
\end{theorem}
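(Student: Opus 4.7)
My plan is to combine the algebraic characterization of $V(\cF^{j-1}_{k,\bfn})$ given by Corollary \ref{fp} with the dimension and rationality results of Theorem \ref{pret} applied to the incidence variety $W_{\bfn,k-j-1,n-k-j}$.

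I start with the set-theoretic equality $V(\cF^{j-1}_{k,\bfn})=\pi(W_{\bfn,k-j-1,n-k-j})$ for $0\leq j\leq m$. Corollary \ref{fp} identifies the left-hand side as the locus where a minimal element $(A^0,B^0)\in\V_{\bfu,\bfv,\bfn,k}$ has $\deg A^0\leq k-1-j$ and $\deg B^0\leq n-k-j$; the projective class of its coefficient vector then lifts $(\bfu,\bfv)$ to a point of $W_{\bfn,k-j-1,n-k-j}$, giving the inclusion $\subseteq$. For the reverse, any lift in $W_{\bfn,k-j-1,n-k-j}$ furnishes a nontrivial $(A,B)\in\V_{\bfu,\bfv,\bfn,k}$ with the same degree bounds; by Proposition \ref{2.5}, $(A,B)$ is a polynomial multiple of a minimal element, which therefore has degrees bounded by $k-1-j$ and $n-k-j$ as well, and Corollary \ref{fp} returns $(\bfu,\bfv)$ to $V(\cF^{j-1}_{k,\bfn})$.

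For $0\leq j\leq m$ the parameters $\alpha=k-j-1$ and $\beta=n-k-j$ are nonnegative, so Theorem \ref{pret} applies and shows that $\pi(W_{\bfn,\alpha,\beta})$ is a rational variety defined over $\K$ of dimension $l+\min\{n,n-2j\}$. For $j=0$ this dimension is $l+n$ and recovers the ambient space (codimension $0$), compatibly with $\cF^{-1}_{k,\bfn}=\langle 0\rangle$. For $1\leq j\leq m$ the dimension is $l+n-2j$, giving the claimed codimension $2j$. Irreducibility and the base field are transferred from $W_{\bfn,\alpha,\beta}$ (Theorem \ref{osu}) through the projection.

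It remains to handle $j=m+1$, where one of $k-m-2,n-k-m-1$ is negative; I read $W_{\bfn,k-m-2,n-k-m-1}$ as empty by convention and show $V(\cF^m_{k,\bfn})=\emptyset$. If a point $(\bfu,\bfv)$ lay in $V(\cF^m_{k,\bfn})$, then $\M_{k-m-1,n-k-m,\bfn}(\bfu,\bfv)$ would be an $n\times(n-2m+1)$ matrix with all $(n-2m)$-minors vanishing, hence would have kernel of dimension at least two. This produces two linearly independent pairs in $\V_{\bfu,\bfv,\bfn,k}$ with degrees bounded by $(k-m-1,n-k-m)$; Proposition \ref{2.5} forces a minimal element to satisfy $\deg A^0\leq k-m-2$ and $\deg B^0\leq n-k-m-1$. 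Plugging into Theorem \ref{hipp} gives $s_0\geq m+1$, so $\dim_\K\V_{\bfu,\bfv,\bfn,k}\geq m+2$, contradicting the bound $\dim_\K\V_{\bfu,\bfv,\bfn,k}\leq m+1$ recorded just after Theorem \ref{hipp}. The main obstacle is the boundary bookkeeping: one must track degree conventions carefully in the subcases where $A^0$ or $B^0$ vanishes to confirm that $s_0$ really jumps above $m$ in every subcase.
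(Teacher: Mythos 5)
For $j=0,\ldots,m$ your argument is correct and follows the same route as the paper: its one-line proof of Theorem \ref{oteo} is exactly the combination of Corollary \ref{fp}, the equivalence \eqref{axx}, and Theorem \ref{pret} that you unpack, and your codimension bookkeeping ($\alpha+\beta+1=n-2j$, so $\dim\pi(W_{\bfn,\alpha,\beta})=l+n-2j$ once $j\geq1$) is right.

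The step that fails is the claim $V(\cF^m_{k,\bfn})=\emptyset$, and the obstacle you flag at the end (degree conventions when $A^0$ or $B^0$ vanishes) is not a bookkeeping nuisance but a genuine hole. Your contradiction runs through the inequality $\dim_\K(\V_{\bfu,\bfv,\bfn,k})\leq m+1$, asserted in the paper just after Lemma \ref{filt}; but the derivation $s_0=\min\{k-1-d_A,\,n-k-d_B\}\leq\min\{k-1,n-k\}=m$ needs \emph{both} $d_A\geq0$ and $d_B\geq0$, i.e.\ both $A^0\neq0$ and $B^0\neq0$. If, say, $A^0=0$, one only gets $s_0=n-k-d_B$, which can exceed $m$. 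Concretely take $n=5$, $k=2$, $l=3$, $\bfn=(2,2,1)$, $\bfu=(1,2,3)$, $\bfv=((0,0),(0,0),(4))$, so $m=1$. The WHIP constraints at $u_1,u_2$ force $A=0$ (a linear polynomial with a double zero at two distinct points vanishes), whence $B(3)=0$ and $\V_{\bfu,\bfv,(2,2,1),2}=\{(0,B):B\in\K[x]_3,\ B(3)=0\}$ has dimension $3>m+1$; one checks directly that $\M_{0,2,(2,2,1)}(\bfu,\bfv)$ has rank $2<n-2m=3$, so all $(n-2m)$-minors vanish and $(\bfu,\bfv)\in V(\cF^m_{k,\bfn})$. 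Thus the set is not empty, and the theorem's $j=m+1$ clause (together with the bound $\dim\leq m+1$ it leans on) fails at such degenerate data. The same gap is already present in the paper, which does not argue the $j=m+1$ case; it also shows that your convention $W_{\bfn,k-m-2,n-k-m-1}=\emptyset$ does not match the definition \eqref{incid}, since the point above lifts to it with $A\equiv0$.
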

\begin{proof}
From Corollary \ref{fp}, we have that $(\bfu,\bfv)\in V(\cF^{j-1}_{k,\bfn})$ if and only if the matrix  $\M_{k-j-1,n-k-j,\bfn}$ is rank deficient. From \eqref{axx} we deduce then that $$\pi(W_{\bfn,k-j-1,n-k-j})=V(\cF^{j-1}_{k,\bfn}).$$  The rest of the claim now follows from Theorem \ref{pret}.
\end{proof}
\smallskip
\begin{theorem}\label{bp}
Let $1\leq k\leq n.$
The set of unattainable points for the RHIP is a union $\cB_1\sqcup\cB_3\sqcup\ldots\sqcup \cB_{2m-1}$, where for $j=1,\ldots, m,\,\cB_{2j-1}$ is a disjoint union of $l$ rational irreducible varieties defined over $\K$ of codimension $1$ in $V(\cF^{j-2}_{k,\bfn})\setminus V(\cF^{j-1}_{k,\bfn})$ (i.e. of codimension $2j-1$ in the ambient space).
\end{theorem}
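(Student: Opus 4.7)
The plan is to combine Theorem \ref{hipp}, the stratification from Theorem \ref{bg}, and the rationality result of Corollary \ref{26}. First, by Theorem \ref{hipp}, $(\bfu,\bfv)$ is unattainable for the RHIP precisely when a minimal element $(A^0_{k-1},B^0_{n-k})$ of $\V_{\bfu,\bfv,\bfn,k}$ satisfies $\gcd(A^0_{k-1},B^0_{n-k})\neq 1$. The explicit form of this gcd from the proof of Proposition \ref{2.5}, namely $\tilde C(x)=\prod_{i\in I}(x-u_i)^{n_i-j_i}$, reduces non-coprimality to the existence of some $i\in\{1,\ldots,l\}$ with $B^0_{n-k}(u_i)=0$ (and hence $A^0_{k-1}(u_i)=v_{i,0}B^0_{n-k}(u_i)=0$ by the $j=0$ WHIP relation).

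Next I stratify the ambient space by $\dim_\K\V_{\bfu,\bfv,\bfn,k}=j$; by Theorem \ref{bg}, the $j$-th stratum is $V(\cF^{j-2}_{k,\bfn})\setminus V(\cF^{j-1}_{k,\bfn})$, and Corollary \ref{fp} says that on it the minimal element has degrees bounded by $\alpha:=k-j$ and $\beta:=n-k-j+1$, with at least one bound attained. I write $\cB_{2j-1}$ for the unattainable set restricted to this stratum, and for $i=1,\ldots,l$ define
$$
\cB_{2j-1,i}:=\{(\bfu,\bfv)\in V(\cF^{j-2}_{k,\bfn})\setminus V(\cF^{j-1}_{k,\bfn}):\,B^0_{n-k}(u_i)=0\},
$$
so that by the first step $\cB_{2j-1}=\bigcup_{i=1}^l\cB_{2j-1,i}$.

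The core identification is $\cB_{2j-1,i}=\pi(V(I_i))\setminus V(\cF^{j-1}_{k,\bfn})$, where $V(I_i)\subset W_{\bfn,\alpha,\beta}$ is the sub-incidence variety of Theorem \ref{osu} cut out by $B(U_i)=0$. Theorem \ref{oteo} gives $\pi(V(I_i))\subseteq\pi(W_{\bfn,\alpha,\beta})=V(\cF^{j-2}_{k,\bfn})$. For $(\bfu,\bfv)\in V(\cF^{j-1}_{k,\bfn})$, the minimal element has degrees strictly below $(\alpha,\beta)$, so multiplying it by $(x-u_i)$ yields an element of $\V$ with degrees $\leq(\alpha,\beta)$ vanishing at $u_i$, giving $V(\cF^{j-1}_{k,\bfn})\subseteq\pi(V(I_i))$. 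Conversely, within the stratum the subspace of $\V_{\bfu,\bfv,\bfn,k}$ consisting of pairs with $\deg A\leq\alpha,\deg B\leq\beta$ is one-dimensional (one of the degree bounds being tight), so $B(u_i)=0$ there is equivalent to $B^0_{n-k}(u_i)=0$. Corollary \ref{26}, whose hypothesis $\alpha,\beta\geq 1$ is satisfied for $1\leq j\leq m$, then shows that $\pi(V(I_i))$ is a rational irreducible variety defined over $\K$ of dimension $l+n-2j+1$, hence of codimension $2j-1$ in the ambient space and codimension $1$ in $V(\cF^{j-2}_{k,\bfn})$. Since $V(\cF^{j-1}_{k,\bfn})\cap\pi(V(I_i))$ is a proper closed subset (its dimension $l+n-2j$ is strictly smaller), removing it preserves rationality and irreducibility.

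For $i\neq i'$ the varieties $\cB_{2j-1,i}$ and $\cB_{2j-1,i'}$ are distinct irreducible components of $\cB_{2j-1}$ of the same codimension: their pairwise intersection is cut out within the stratum by the two independent polynomial conditions $B^0(u_i)=B^0(u_{i'})=0$ and is therefore of codimension at least $2$ in the stratum, a proper closed subset of each. This yields the disjoint-union decomposition claimed. The main technical obstacle is verifying the equivalence ``$B(u_i)=0$ for some $(A,B)\in\V$ of degrees $\leq\alpha,\beta \iff B^0_{n-k}(u_i)=0$'' on the stratum, which hinges on the attainment of a degree bound by the minimal element forcing any such $(A,B)$ to be a constant multiple of $(A^0_{k-1},B^0_{n-k})$.
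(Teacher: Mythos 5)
Your proof is correct and essentially mirrors the paper's: both stratify by $\dim_\K\V_{\bfu,\bfv,\bfn,k}$ via Theorems \ref{bg} and \ref{oteo}, identify each piece of $\cB_{2j-1}$ with (an open subset of) the projection of the incidence variety $W_{\bfn,k-j,n-k-j+1}\cap V(B(U_i))$ (your $V(I_i)$), and invoke Corollary \ref{26} for rationality, irreducibility, and dimension. You in fact supply more detail than the paper at a couple of points — the inclusion $V(\cF^{j-1}_{k,\bfn})\subseteq\pi(V(I_i))$ and the one-dimensionality argument forcing $B(u_i)=0$ to coincide with $B^0_{n-k}(u_i)=0$ on the stratum — while the ``disjointness'' of the $l$ pieces (where you assert rather than prove the independence of $B^0(u_i)=0$ and $B^0(u_{i'})=0$) is handled only loosely by both you and the paper.
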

\begin{proof}
For $j\in\{1,\ldots, m+1\},$ thanks to Theorem \ref{oteo} we have that
$$V(\cF^{j-2}_{k,\bfn})\setminus V(\cF^{j-1}_{k,\bfn})=\pi(W_{\bfn,k-j,n-k-j+1})\setminus\pi(W_{\bfn,k-j-1,n-k-j}).$$
Let $\cB_{2j-1}$ be the set of unattainable points of the RHIP lying in $V(\cF^{j-2}_{k,\bfn})\setminus V(\cF^{j-1}_{k,\bfn}).$ If  $(\bfu,\bfv)\in\cB_{2j-1},$  there must be $i\in\{1,\ldots, l\}$ such that $(\bfu,\bfv)\in\pi(W^i_{\bfn,k-j,n-k-j+1}),$ where
$$W^i_{\bfn,k-j,n-k-j+1}= W_{\bfn,k-j,n-k-j+1}\cap V(B(U_i)).$$
We deduce that $j$ must be at most $m+1.$ Corollary \ref{26} implies that each of the $\pi(W^i_{\bfn,k-j,n-k-j+1}),\,i=1,\ldots, n,$ is rational (in particular irreducible),  of dimension $2(n-j)+1.$ This concludes with the proof of the Theorem.
\end{proof}
\smallskip
Based on Theorems \ref{oteo} and \ref{bp}, we can design an incremental algorithm to decide the solvability of the RHIP without computing any element of the kernel of a matrix.

\begin{algorithm}\label{algop}$^{}$
\par\noindent\underline{Input:} $\bfn\in\N^l,\,k\in\N,\,(\bfu,\bfv)\in\big(\K^l\setminus Z)\times\K^n$
\par\noindent\underline{Output:}  A message saying that $(\bfu,\bfv)$ is unattainable or not for the RHIP associated to $\bfn.$
\end{algorithm}

\begin{enumerate}
\item $j:=\min\{k-1,|\bfn|-k\}.$
\item Compute the matrix $\M_{ k-1-j, n-k-j,\bfn}(\bfu,\bfv).$
\item If $\mbox{rank}(\M_{ k-1-j, n-k-j,\bfn}(\bfu,\bfv))=n-2j+1,$ then $j-1\mapsto j$, and goto (2).
\item $i:=1.$
\item If $i=l+1$ then print ``not unattainable'' and stop the algorithm.
\item Compute the $(n-1)\times (n-2j-1)$ submatrix $\M^i_{ k-1-j, n-k-j,\bfn}(\bfu,\bfv)$ by removing the  last row in the block indexed by $u_i$', and the columns $k-j$ and $n-2j+1$ of $\M_{k-1-j, n-k-j,\bfn}(\bfu,\bfv).$
\item If $\mbox{rank}(\M^i_{ k-1-j, n-k-j,\bfn}(\bfu,\bfv))=n-2j-1,$ then $i+1\mapsto i$ and goto (5), else print ``unattainable''.
\end{enumerate}

\begin{remark}
One could add an extra step at the end of (3) in Algorithm \ref{algop} to compute a nontrivial vector in the kernel of $\M_{k-1-j, n-k-j,\bfn}(\bfu,\bfv)$ whose coordinates will encode the minimal solution of the WHIP.
\end{remark}

\bigskip
\section{Equations and Proof of the main Theorems}\label{EPF}
In this section we will give explicit equations for the varieties $V(\cF^{j-1}_{k,\bfn})$ and descriptions for the open sets $V(\cF^{j-1}_{k,\bfn})\setminus V(\cF^j_{k,\bfn}).$
They will arise  as maximal minors  of matrices of the form $\M_{\alpha,\beta,\bfn}(\bfU,\bfV),$ with $\alpha+\beta=n-1.$ We start by fixing $(\bfu,\bfv)\in(\K^l\setminus Z)\times\K ^n.$ Recall from the
Introduction that, for $i=1,\ldots, n+1,$ and $1\leq k\leq n,$  we  denote by $\Delta^{\bfn}_{k,i}$ the $i$-th maximal signed minor of
$\M_{k-1,n-k,\bfn}(\bfU,\bfV).$  The following result is an easy verification.

\begin{lemma}\label{deltas}
Up to a sign, $\Delta^{\bfn}_{k-1,n+1}=\Delta^{\bfn}_{k,k}$.
\end{lemma}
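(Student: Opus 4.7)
The plan is to unwind the definitions and observe that the two quantities in the lemma are, up to the sign convention built into the notation $\Delta^{\bfn}_{\star,\star}$, the determinant of the \emph{same} $n\times n$ matrix. First I would recall from the statement that $\Delta^{\bfn}_{k,i}$ denotes the $i$-th signed maximal minor of $\M_{k-1,n-k,\bfn}(\bfU,\bfV)$. Hence $\Delta^{\bfn}_{k-1,n+1}$ is obtained from $\M_{k-2,n-k+1,\bfn}(\bfU,\bfV)$ by deleting its last (i.e.\ $(n+1)$-th) column, whereas $\Delta^{\bfn}_{k,k}$ is obtained from $\M_{k-1,n-k,\bfn}(\bfU,\bfV)$ by deleting its $k$-th column.

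The key observation I would then isolate is that the columns of these matrices are largely index-invariant. Inspecting \eqref{symbML}, the $r$-th ``left'' column of $\M_{\alpha,\beta,n_i}(U_i,\bfV_i)$ is the vector whose $(j+1)$-th entry is $\binom{r}{j}U_i^{r-j}$; this depends only on $r$ and $U_i$, not on $\alpha$. Similarly, inspecting \eqref{symbMR}, the $r$-th ``right'' column has $(j+1)$-th entry $-\sum_{t=0}^{j}\binom{r}{t}V_{i,j-t}U_i^{r-t}$, which again depends only on $r$ (and on $U_i,\bfV_i$), not on $\beta$. The reason is simply that $\binom{\beta}{t}=0$ whenever $t>\beta$, so all the binomial coefficients in column $r$ are automatically $\binom{r}{t}$. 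I would state this as a short preliminary remark.

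Given that, I would compare the two matrices column by column. The matrix $\M_{k-2,n-k+1,\bfn}$ has $k-1$ left columns, indexed $r=0,\ldots,k-2$, and $n-k+2$ right columns, indexed $r=0,\ldots,n-k+1$; deleting the last column removes the right column $r=n-k+1$, leaving left columns $r=0,\ldots,k-2$ and right columns $r=0,\ldots,n-k$. The matrix $\M_{k-1,n-k,\bfn}$ has $k$ left columns, indexed $r=0,\ldots,k-1$, and $n-k+1$ right columns, indexed $r=0,\ldots,n-k$; deleting the $k$-th column removes the left column $r=k-1$, leaving left columns $r=0,\ldots,k-2$ and right columns $r=0,\ldots,n-k$. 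By the preliminary remark, the surviving columns are literally the same in both cases, so the two determinants coincide.

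Finally, since $\Delta^{\bfn}_{\star,\star}$ denotes the \emph{signed} maximal minor, the conventions $(-1)^{i}$ attached to the deleted column positions $i=n+1$ and $i=k$ need not agree, so the equality of the underlying determinants translates into an equality of $\Delta^{\bfn}_{k-1,n+1}$ and $\Delta^{\bfn}_{k,k}$ up to a sign, as claimed. There is no real obstacle here; the only thing to be careful about is keeping the column indices and the ranges $r=0,\ldots,k-2$ and $r=0,\ldots,n-k$ straight on both sides.
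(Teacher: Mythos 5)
Your proof is correct and supplies exactly the verification the paper leaves to the reader (the paper only remarks that the lemma is ``an easy verification'' and gives no argument). The key step---that each column of $\M_{\alpha,\beta,\bfn}(\bfU,\bfV)$ depends only on the power of $x$ it represents and not on $\alpha$ or $\beta$, because $\binom{\beta}{t}=0$ for $t>\beta$ makes the generic binomial coefficients collapse to $\binom{r}{t}$---is identified and used cleanly, and the column bookkeeping ($k$-th column of $\M_{k-1,n-k,\bfn}$ is the left column of power $k-1$; $(n{+}1)$-th column of $\M_{k-2,n-k+1,\bfn}$ is the right column of power $n{-}k{+}1$) is accurate.
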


\begin{proposition}
\label{dim1}
\[
\dim_\K(\V_{\bfu,\bfv,\bfn,k})=1 \Leftrightarrow \Delta^{\bfn}_{k,k}(\bfu,\bfv)\neq 0 \textrm{ or } \Delta^{\bfn}_{k,n+1}(\bfu,\bfv)\neq 0.
\]
\end{proposition}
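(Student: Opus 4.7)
The plan is to exploit the identification $\V_{\bfu,\bfv,\bfn,k}\cong\ker(\M_{\bfu,\bfv,\bfn,k})$ from Proposition \ref{trump}. Since $\M_{\bfu,\bfv,\bfn,k}$ is an $n\times(n+1)$ matrix, its kernel is one--dimensional if and only if the matrix attains maximal rank $n$, which is equivalent to the non--vanishing of at least one of its $n\times n$ maximal minors $\Delta^{\bfn}_{k,i}(\bfu,\bfv)$ for $i=1,\ldots,n+1$. The implication ($\Leftarrow$) is then immediate: if $\Delta^{\bfn}_{k,k}\neq 0$ or $\Delta^{\bfn}_{k,n+1}\neq 0$, then $\M_{\bfu,\bfv,\bfn,k}$ has full rank and $\dim_\K\V_{\bfu,\bfv,\bfn,k}=1$.

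For ($\Rightarrow$) I would invoke Cramer's rule: when $\M_{\bfu,\bfv,\bfn,k}$ has rank $n$, the vector $\bigl(\Delta^{\bfn}_{k,1},\ldots,\Delta^{\bfn}_{k,n+1}\bigr)$ of signed maximal minors is a non--zero element of $\ker(\M_{\bfu,\bfv,\bfn,k})$ and therefore spans it. Under the identification with $\V_{\bfu,\bfv,\bfn,k}$, the first $k$ entries of this vector are the coefficients of a polynomial $A^0(x)$ of degree at most $k-1$, and the last $n-k+1$ entries are the coefficients of a polynomial $B^0(x)$ of degree at most $n-k$. In particular, the coefficient of $x^{k-1}$ in $A^0$ is $\pm\Delta^{\bfn}_{k,k}$ and the coefficient of $x^{n-k}$ in $B^0$ is $\pm\Delta^{\bfn}_{k,n+1}$; these are precisely the two minors appearing in the statement.

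The key step is then to combine this bookkeeping with Theorem \ref{hipp}. Since $(A^0,B^0)$ generates the one--dimensional kernel, it is a minimal element of $\V_{\bfu,\bfv,\bfn,k}$, and the assumption $\dim_\K\V_{\bfu,\bfv,\bfn,k}=1$ forces $s_0=\min\{k-1-d_A,\ n-k-d_B\}=0$. Hence $d_A=k-1$ or $d_B=n-k$, i.e.\ the leading coefficient of $A^0$ or $B^0$ is non--zero, which translates into $\Delta^{\bfn}_{k,k}(\bfu,\bfv)\neq 0$ or $\Delta^{\bfn}_{k,n+1}(\bfu,\bfv)\neq 0$, as desired.

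The only delicate point, which I would verify carefully before writing anything else, is the indexing: one must check from the block description \eqref{symbML}--\eqref{symbMR} that the $k$-th column of $\M_{k-1,n-k,\bfn}(\bfu,\bfv)$ is indeed the one dual to the coefficient $a_{k-1}$ of $x^{k-1}$ in $A$, and that the $(n+1)$-th column is the one dual to $b_{n-k}$ in $B$. This is exactly what singles out these two particular minors among the $n+1$ maximal minors of $\M_{\bfu,\bfv,\bfn,k}$; with the indexing settled, the argument above is essentially a one--line application of Theorem \ref{hipp}.
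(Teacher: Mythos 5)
Your argument is correct and follows essentially the same route as the paper's own proof: identify $\V_{\bfu,\bfv,\bfn,k}$ with $\ker(\M_{\bfu,\bfv,\bfn,k})$, use the fact that the vector of signed maximal minors spans the kernel when the rank is maximal (the paper invokes Hilbert--Burch, you invoke Cramer's rule, but for an $n\times(n+1)$ matrix of rank $n$ these are the same elementary fact), and then apply Theorem \ref{hipp} to force $s_0=0$, hence $d_A=k-1$ or $d_B=n-k$, which is exactly $\Delta^{\bfn}_{k,k}\neq 0$ or $\Delta^{\bfn}_{k,n+1}\neq 0$. You merely make explicit the $s_0=0$ bookkeeping that the paper compresses into ``thanks to Theorem \ref{hipp} again,'' and your indexing check is indeed the right thing to verify and comes out correctly.
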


\begin{proof}
From Theorem \ref{hipp} we deduce  that $\dim_\K(\V_{\bfu,\bfv,\bfn,k})=1$ if and only if the rank of $\M_{\bfu,\bfv,\bfn,k}$ is maximal, so $\Leftarrow]$ is
clear. Reciprocally, if $\dim_\K (\V_{\bfu,\bfv,\bfn,k})=1,$  then by Hilbert-Burch $(\Delta^{\bfn}_{k,1},\dots , \Delta^{\bfn}_{k,k}; \Delta^{\bfn}_{k,k+1},\dots , \Delta^{\bfn}_{k,n+1})(\bfu,\bfv)$
generates  $\mbox{\rm ker}(\M_{\bfu,\bfv,\bfn,k}),$ and hence must encode a minimal element in $\V_{\bfu,\bfv,\bfn,k}.$ We must
have $\Delta^{\bfn}_{k,k}(\bfu,\bfv)\neq 0$ or   $\Delta^{\bfn}_{k,n+1}(\bfu,\bfv)\neq 0$ thanks to Theorem \ref{hipp} again.
\end{proof}
\smallskip
From Lemma \ref{deltas} we can express Proposition \ref{dim1} in a more symmetric way.

\begin{corollary}\label{43}
\[
\dim_\K (\V_{\bfu,\bfv,\bfn,k})=1 \Leftrightarrow \Delta^{\bfn}_{k,k}(\bfu,\bfv)\neq 0 \textrm{ or } \Delta^{\bfn}_{k+1,k+1}(\bfu,\bfv)\neq 0.
\]
\end{corollary}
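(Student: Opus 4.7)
The plan is essentially a one-line substitution. Proposition \ref{dim1} already establishes
\[
\dim_\K (\V_{\bfu,\bfv,\bfn,k})=1 \iff \Delta^{\bfn}_{k,k}(\bfu,\bfv)\neq 0 \textrm{ or } \Delta^{\bfn}_{k,n+1}(\bfu,\bfv)\neq 0,
\]
so the only task is to rewrite the second inequality in a form symmetric to the first. This is exactly what Lemma \ref{deltas} provides.

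First, I would apply Lemma \ref{deltas} with the index $k$ replaced by $k+1$. The lemma states that up to sign $\Delta^{\bfn}_{k-1,n+1}=\Delta^{\bfn}_{k,k}$; shifting $k\mapsto k+1$ yields $\Delta^{\bfn}_{k,n+1}=\pm\Delta^{\bfn}_{k+1,k+1}$. In particular, vanishing (and nonvanishing) of $\Delta^{\bfn}_{k,n+1}(\bfu,\bfv)$ is equivalent to vanishing (and nonvanishing) of $\Delta^{\bfn}_{k+1,k+1}(\bfu,\bfv)$.

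Substituting this equivalence into the right-hand side of Proposition \ref{dim1} gives the desired characterization. There is no genuine obstacle here, since both ingredients are already established in the text; one only has to be careful about the validity of the index shift $k\mapsto k+1$ in Lemma \ref{deltas}. The case $k=n$ lies at the boundary, but the symbolic minor $\Delta^{\bfn}_{k+1,k+1}$ is still well defined via the matrix $\M_{k,n-k-1,\bfn}(\bfU,\bfV)$ appearing in the statement of Lemma \ref{deltas} (read with $k$ replaced by $k+1$), so the equivalence persists in the degenerate case as well.
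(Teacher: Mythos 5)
Your proposal is correct and is exactly what the paper has in mind: the corollary is deduced from Proposition \ref{dim1} via the index-shifted version of Lemma \ref{deltas}, namely $\Delta^{\bfn}_{k,n+1}=\pm\Delta^{\bfn}_{k+1,k+1}$. Your extra remark about the boundary case $k=n$ is a careful point the paper leaves implicit, and it is handled correctly since $\Delta^{\bfn}_{n+1,n+1}$ reduces to a confluent Vandermonde minor of $\M_{n,-1,\bfn}(\bfU,\bfV)$.
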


\begin{proposition}
\label{dim2} If $2\leq k \leq n-1,$ then
\[
\dim_\K (\V_{\bfu,\bfv,\bfn,k})=2 \Leftrightarrow \left\{ \begin{array}{l} \Delta^{\bfn}_{k,k}(\bfu,\bfv)=\Delta^{\bfn}_{k,n+1}(\bfu,\bfv)=0, \ \mbox{and} \\ \Delta^{\bfn}_{k-1,k-1}(\bfu,\bfv)\neq 0 \textrm{ or } \Delta^{\bfn}_{k+1,n+1}(\bfu,\bfv)\neq 0. \end{array} \right.
\]
\end{proposition}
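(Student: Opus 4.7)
The plan is to reduce the statement, via Proposition \ref{dim1}, Corollary \ref{43}, and Lemma \ref{deltas}, to an equivalence between the dimensions of the three consecutive kernels $\V_{\bfu,\bfv,\bfn,k-1}$, $\V_{\bfu,\bfv,\bfn,k}$, $\V_{\bfu,\bfv,\bfn,k+1}$, and then to close the argument with a direct dimensional computation combining Theorem \ref{hipp} and Proposition \ref{2.5}. The hypothesis $2\leq k\leq n-1$ is used only to guarantee that both neighbouring interpolation spaces are defined. Concretely, Corollary \ref{43} together with Lemma \ref{deltas} says that $\dim_\K\V_{\bfu,\bfv,\bfn,k}\geq 2$ is equivalent to $\Delta^{\bfn}_{k,k}=\Delta^{\bfn}_{k,n+1}=0$, giving the first condition in the statement. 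Applying Proposition \ref{dim1} at index $k-1$ (resp.\ $k+1$), and using Lemma \ref{deltas} to replace $\Delta^{\bfn}_{k-1,n+1}$ by $\pm\Delta^{\bfn}_{k,k}$ (resp.\ $\Delta^{\bfn}_{k+1,k+1}$ by $\pm\Delta^{\bfn}_{k,n+1}$), the non-vanishing $\Delta^{\bfn}_{k-1,k-1}\neq 0$ (resp.\ $\Delta^{\bfn}_{k+1,n+1}\neq 0$) translates, under the first condition, into $\dim_\K\V_{\bfu,\bfv,\bfn,k-1}=1$ (resp.\ $\dim_\K\V_{\bfu,\bfv,\bfn,k+1}=1$). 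Thus the task reduces to showing, assuming $\dim_\K\V_{\bfu,\bfv,\bfn,k}\geq 2$, the equivalence
\[
\dim_\K\V_{\bfu,\bfv,\bfn,k}\geq 3 \ \Longleftrightarrow \ \dim_\K\V_{\bfu,\bfv,\bfn,k-1}\geq 2 \ \textrm{and}\  \dim_\K\V_{\bfu,\bfv,\bfn,k+1}\geq 2.
\]

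For the forward direction, Theorem \ref{hipp} tells us that when $\dim_\K\V_{\bfu,\bfv,\bfn,k}\geq 3$ the minimal element $(A^0,B^0)$ satisfies $\deg A^0\leq k-3$ and $\deg B^0\leq n-k-2$. Then both $(A^0,B^0)$ and $(xA^0,xB^0)$ meet the degree bounds of $\V_{\bfu,\bfv,\bfn,k-1}$ and are $\K$-linearly independent (the degenerate case $A^0=0$ forces $B^0\neq 0$, and the pair $(0,B^0),(0,xB^0)$ is still independent), so $\dim_\K\V_{\bfu,\bfv,\bfn,k-1}\geq 2$; the argument for $k+1$ is symmetric.

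The converse is the main step, and where I expect the real work to lie. I take the minimal element $(A^-,B^-)$ of $\V_{\bfu,\bfv,\bfn,k-1}$: under the assumption $\dim_\K\V_{\bfu,\bfv,\bfn,k-1}\geq 2$, Theorem \ref{hipp} forces $\deg A^-\leq k-3$ and $\deg B^-\leq n-k$, which in particular fall within the degree bounds of $\V_{\bfu,\bfv,\bfn,k}$, so $(A^-,B^-)\in\V_{\bfu,\bfv,\bfn,k}$. Proposition \ref{2.5}, applied now inside $\V_{\bfu,\bfv,\bfn,k}$, then yields $(A^-,B^-)=C(x)(A^0,B^0)$ for some $C(x)\in\K[x]$, whence $\deg A^0\leq \deg A^-\leq k-3$. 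The symmetric argument with the minimal element $(A^+,B^+)$ of $\V_{\bfu,\bfv,\bfn,k+1}$ gives $\deg B^0\leq \deg B^+\leq n-k-2$. Combining these, $s_0\geq 2$ in the notation of Theorem \ref{hipp}, and hence $\dim_\K\V_{\bfu,\bfv,\bfn,k}\geq 3$. The key conceptual point, and the hardest thing to see at first, is that each neighbour controls only one of the two degrees $\deg A^0$ or $\deg B^0$; only by running the argument from both sides does one obtain the simultaneous degree drop that upgrades $\dim_\K\V_{\bfu,\bfv,\bfn,k}$ from $\geq 2$ to $\geq 3$.
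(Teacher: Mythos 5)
Your argument is correct and takes a genuinely different route from the paper's, although it relies on the same supporting results. The paper works directly with the coefficient vector of a minimal element of $\V_{\bfu,\bfv,\bfn,k}$: it splits into cases according to which leading coefficient ($a_{k-2}$ or $b_{n-k-1}$) is nonzero, notes that the re-sorted vector is then a minimal element of the neighbouring space $\V_{\bfu,\bfv,\bfn,k\mp 1}$, and closes with Proposition \ref{dim1} applied there; the converse retraces the same steps in reverse. You instead translate the minor conditions, via Proposition \ref{dim1} and Lemma \ref{deltas}, into dimension statements about the two neighbours, reducing everything to the equivalence $\dim_\K\V_{\bfu,\bfv,\bfn,k}\geq 3$ iff $\dim_\K\V_{\bfu,\bfv,\bfn,k-1}\geq 2$ and $\dim_\K\V_{\bfu,\bfv,\bfn,k+1}\geq 2$, which you then verify using the $s_0$-formula of Theorem \ref{hipp} together with the divisibility statement of Proposition \ref{2.5}. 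This buys you symmetry (the two minors are treated on an equal footing, with no casework on leading coefficients) and it makes the mechanism explicit: each neighbour bounds only one of the degrees $\deg A^0$, $\deg B^0$, and only the combination of the two bounds forces $s_0\geq 2$. It is arguably also the cleaner template for the general Proposition \ref{imp}. One small point worth spelling out: in the converse, the inference $\deg A^0\leq\deg A^-$ from $(A^-,B^-)=C(x)(A^0,B^0)$ tacitly assumes $A^-\neq 0$; if $A^-=0$, then $C\neq 0$ forces $A^0=0$ and the bound holds vacuously, so nothing breaks, but the degenerate case deserves a word.
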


\begin{proof}
Suppose first that $\dim_\K (\V_{\bfu,\bfv,\bfn,k})=2$. Then, $\Delta^{\bfn}_{k,k}(\bfu,\bfv)=\Delta^{\bfn}_{k,n+1}(\bfu,\bfv)=0$ by
Proposition \ref{dim1}. Let $(a_0,\dots,a_{k-2},0;b_0,\dots, b_{n-k-1},0)$ be the vector of coefficients of a minimal element
in $\V_{\bfu,\bfv,\bfn,k}$, with $a_{k-2}\neq 0$ or $b_{n-k-1}\neq 0$. If $a_{k-2}\neq 0$ then $(a_0,\dots,a_{k-2};b_0,\dots, b_{n-k-1},0,0)$ gives a
minimal element in $\V_{\bfu,\bfv,\bfn,k-1,n}$ and so, by Proposition \ref{dim1} again, and using that $\Delta^{\bfn}_{k-1,n+1}(\bfu,\bfv)=\Delta^{\bfn}_{k,k}(\bfu,\bfv)=0$, we have that $\Delta^{\bfn}_{k-1,k-1}(\bfu,\bfv)\neq 0$.
\par
If $b_{n-k-1}\neq 0$ then $(a_0,\dots,a_{k-2},0,0;b_0,\dots, b_{n-k-1})$ gives a minimal element in $\V_{\bfu,\bfv,\bfn,k+1,n}$ and
now $\Delta^{\bfn}_{k+1,n+1}(\bfu,\bfv)\neq 0$.
\par
Reciprocally, if we assume $\Delta^{\bfn}_{k,k}(\bfu,\bfv)=\Delta^{\bfn}_{k,n+1}(\bfu,\bfv)=0$ and $\Delta^{\bfn}_{k-1,k-1}(\bfu,\bfv)\neq 0,$ then
$\dim_\K (\V_{\bfu,\bfv,\bfn,k})>1$ by Proposition \ref{dim1}. Let $(a_0,\dots ,a_{k-2},0;b_0,\dots , b_{n-k-1},0)$ be the coefficients of a
minimal element in $\V_{\bfu,\bfv,\bfn,k}$. We sort them to produce  $(a_0,\dots ,a_{k-2};b_0,\dots , b_{n-k-1},0,0),$ a minimal
element of $\V_{\bfu, \bfv,\bfn, k-1,n}$, a vector space of dimension $1$ since $\Delta^{\bfn}_{k-1,k-1}(\bfu,\bfv)\neq 0$, and so
$a_{k-2}\neq 0$. By Theorem \ref{hipp} we deduce then that $\dim_\K(\V_{\bfu,\bfv,\bfn,k})=2$.
\par  For the case $\Delta^{\bfn}_{k,k}(\bfu,\bfv)=\Delta^{\bfn}_{k,n+1}(\bfu,\bfv)=0$ and $\Delta^{\bfn}_{k+1-1,n+1}(\bfu,\bfv)\neq 0,$
the coefficients of a minimal element $(a_0,\dots ,a_{k-2},0;b_0,\dots , b_{n-k-1},0)$ in $\V_{\bfu,\bfv,\bfn,k}$ will give a minimal element of the form
$(a_0,\dots ,a_{k-2},0,0;b_0,\dots , b_{n-k-1})$ of the $1$-dimensional vector space  $\V_{\bfu,\bfv,\bfn,k-1,n}$ and hence $b_{n-k-1}\neq 0$.
\end{proof}
\smallskip
From Lemma \ref{deltas} we get the following equivalent result.
\begin{corollary}
\label{dim2bis} If $2\leq m+1$, then
\[
\dim_\K (\V_{\bfu,\bfv,\bfn,k})=2 \Leftrightarrow \left\{ \begin{array}{l} \Delta^{\bfn}_{k,k}(\bfu,\bfv)= \Delta^{\bfn}_{k+1,k+1}(\bfu,\bfv)=0,\  \mbox{and} \\ \Delta^{\bfn}_{k-1,k-1}(\bfu,\bfv)\neq 0 \textrm{ or } \Delta^{\bfn}_{k+2,k+2}(\bfu,\bfv)\neq 0. \end{array} \right.
\]
\end{corollary}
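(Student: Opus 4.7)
The plan is to derive Corollary \ref{dim2bis} as a direct restatement of Proposition \ref{dim2} via the identity of Lemma \ref{deltas}, in the same way that Corollary \ref{43} was obtained from Proposition \ref{dim1}. First I would observe that the hypothesis $2\leq m+1$ is equivalent to $m\geq 1$, which, since $m=\min\{k-1,n-k\}$, translates exactly into $2\leq k\leq n-1$. Hence the hypothesis of Proposition \ref{dim2} is in force and both equivalences concern the same range of parameters.

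Next I would apply Lemma \ref{deltas} twice to convert the ``last column'' minors appearing in Proposition \ref{dim2} into diagonal ones. Substituting $k\mapsto k+1$ in the lemma yields, up to sign, $\Delta^{\bfn}_{k,n+1}=\Delta^{\bfn}_{k+1,k+1}$, and substituting $k\mapsto k+2$ yields $\Delta^{\bfn}_{k+1,n+1}=\Delta^{\bfn}_{k+2,k+2}$. Therefore the equality condition $\Delta^{\bfn}_{k,k}(\bfu,\bfv)=\Delta^{\bfn}_{k,n+1}(\bfu,\bfv)=0$ of Proposition \ref{dim2} is equivalent to $\Delta^{\bfn}_{k,k}(\bfu,\bfv)=\Delta^{\bfn}_{k+1,k+1}(\bfu,\bfv)=0$, and the inequality alternative $\Delta^{\bfn}_{k+1,n+1}(\bfu,\bfv)\neq 0$ is equivalent to $\Delta^{\bfn}_{k+2,k+2}(\bfu,\bfv)\neq 0$. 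The other inequality $\Delta^{\bfn}_{k-1,k-1}(\bfu,\bfv)\neq 0$ is already in the desired form, so it is carried over unchanged.

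Finally, assembling these substitutions recasts the equivalence of Proposition \ref{dim2} exactly as stated in Corollary \ref{dim2bis}. There is no real obstacle in this argument; the only caveat is a bookkeeping one, namely to check that the sign ambiguity in Lemma \ref{deltas} is immaterial since only the conditions ``vanish'' and ``do not vanish'' are used on both sides of the equivalence. The upshot is a completely symmetric criterion in terms of consecutive diagonal maximal minors $\Delta^{\bfn}_{k-1,k-1},\,\Delta^{\bfn}_{k,k},\,\Delta^{\bfn}_{k+1,k+1},\,\Delta^{\bfn}_{k+2,k+2}$, which is precisely the shape needed for the inductive pattern leading to Theorem \ref{mts}.
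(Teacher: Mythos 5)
Your argument is correct and coincides with the paper's own (one-line) justification, which is simply to apply Lemma~\ref{deltas} to rewrite the minors $\Delta^{\bfn}_{k,n+1}$ and $\Delta^{\bfn}_{k+1,n+1}$ in Proposition~\ref{dim2} as $\Delta^{\bfn}_{k+1,k+1}$ and $\Delta^{\bfn}_{k+2,k+2}$; you also correctly note that $2\leq m+1$ is exactly the hypothesis $2\leq k\leq n-1$ of Proposition~\ref{dim2}, and that the sign ambiguity in Lemma~\ref{deltas} is harmless since only (non)vanishing is used.
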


\begin{proposition}\label{imp}
\label{dimj} If $2\leq j\leq m+1$ (that is, $2\leq j\leq k \leq n-j+1$), then
\[
\dim_\K (\V_{\bfu,\bfv,\bfn,k})=j \Leftrightarrow \left\{ \begin{array}{l} \Delta^{\bfn}_{k,k}(\bfu,\bfv)=\dots =\Delta^{\bfn}_{k-j+2,k-j+2}(\bfu,\bfv)=0 \\
\Delta^{\bfn}_{k,n+1}(\bfu,\bfv)= \dots = \Delta^{\bfn}_{k+j-2,n+1} (\bfu,\bfv)=0, \ \mbox{and} \\
\Delta^{\bfn}_{k-j+1,k-j+1}(\bfu,\bfv)\neq 0 \textrm{ or } \Delta^{\bfn}_{k+j-1,n+1}(\bfu,\bfv)\neq 0. \end{array} \right.
\]
\end{proposition}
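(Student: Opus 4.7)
The plan is to argue by induction on $j$, taking Propositions \ref{dim1} and \ref{dim2} as the base cases $j=1,2$; throughout, Lemma \ref{deltas} identifies $\Delta^{\bfn}_{s,n+1}$ with $\pm\Delta^{\bfn}_{s+1,s+1}$ so that the two asymmetric families of vanishing equations can be manipulated symmetrically. The only preliminary fact required is that if $(A,B)$ solves the WHIP then so does $(CA,CB)$ for any polynomial $C$, a direct Leibniz-rule computation; in particular, all shifts $(x^tA^0,x^tB^0)$ are WHIP solutions.

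For the direction $(\Rightarrow)$, suppose $\dim_\K\V_{\bfu,\bfv,\bfn,k}=j$, and fix a minimal element $(A^0,B^0)$ with $\deg A^0\leq k-j$ and $\deg B^0\leq n-k-j+1$ (Theorem \ref{hipp}), with equality in at least one. For each $s\in\{0,\dots,j-2\}$ the shifts $(x^tA^0,x^tB^0)$, $t=0,\dots,j-1-s$, give $j-s\geq 2$ independent kernel vectors of both $\M_{k-1-s,n-k+s,\bfn}(\bfu,\bfv)$ and $\M_{k-1+s,n-k-s,\bfn}(\bfu,\bfv)$ (the active constraint comes from $\deg A^0$ in the first and from $\deg B^0$ in the second), so both matrices have rank below $n$ and \emph{all} of their $n\times n$ minors vanish; the ``diagonal'' ones give $\Delta^{\bfn}_{k-s,k-s}=0$ and $\Delta^{\bfn}_{k+s,n+1}=0$, which is precisely the list \eqref{Dj}. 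For the ``or'' inequality, WLOG $\deg A^0=k-j$ and assume $\Delta^{\bfn}_{k-j+1,k-j+1}=0$ for contradiction; combined with the already-established $\Delta^{\bfn}_{k-j+2,k-j+2}=0$ and Lemma \ref{deltas}, Proposition \ref{dim1} applied at index $k-j+1$ yields $\dim_\K\V_{\bfu,\bfv,\bfn,k-j+1}\geq 2$, so there is a minimal $(A',B')$ in that space with $\deg A'\leq k-j-1$ and $\deg B'\leq n-k+j-2$. The polynomial $P(x)=A^0(x)B'(x)-A'(x)B^0(x)$ has degree at most $n-2$ and, by the same Leibniz-symmetry calculation underlying Proposition \ref{uni} now applied simultaneously to both WHIP pairs, satisfies $P^{(r)}(u_i)=0$ for every $i$ and $r\leq n_i-1$; these $n$ zeros (counted with multiplicity) force $P=0$, i.e.\ $A^0B'=A'B^0$. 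Writing $A^0=G\bar A,\;B^0=G\bar B$ with $\gcd(\bar A,\bar B)=1$ then forces $A'=E\bar A,\;B'=E\bar B$, and $\deg A'<\deg A^0$ gives $\deg E<\deg G$, hence $\deg B'<\deg B^0\leq n-k-j+1<n-k$; but then $(A',B')\in\V_{\bfu,\bfv,\bfn,k}$ has $\deg A'<\deg A^0$, contradicting the minimality of $(A^0,B^0)$. Hence $\Delta^{\bfn}_{k-j+1,k-j+1}\neq 0$.

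For the direction $(\Leftarrow)$, assume all the vanishings together with the ``or'' inequality. First, $\dim\geq j$: for every $j'\in\{1,\dots,j-1\}$, the inductive hypothesis (Proposition \ref{imp} at level $j'$, or Proposition \ref{dim1} when $j'=1$) would require $\Delta^{\bfn}_{k-j'+1,k-j'+1}\neq 0$ or $\Delta^{\bfn}_{k+j'-1,n+1}\neq 0$ whenever $\dim=j'$, but $j'-1\leq j-2$ places both of these minors inside our vanishing list, ruling out $\dim=j'$. For $\dim\leq j$, WLOG $\Delta^{\bfn}_{k-j+1,k-j+1}\neq 0$: then $\M_{k-j,n-k+j-1,\bfn}(\bfu,\bfv)$ has maximal rank $n$ and its $1$-dimensional kernel is spanned by the Hilbert--Burch vector $(\Delta^{\bfn}_{k-j+1,1},\dots,\Delta^{\bfn}_{k-j+1,n+1})$, whose $(k-j+1)$-st entry is nonzero. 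The bounds $\deg A^0\leq k-j$ and $\deg B^0\leq n-k-j+1\leq n-k+j-1$ supplied by Theorem \ref{hipp} (using $\dim\geq j$) place $(A^0,B^0)$ in this kernel, so it is a nonzero scalar multiple of the Hilbert--Burch vector with nonzero leading $A$-coefficient $a_{k-j}$; hence $\deg A^0=k-j$, and Theorem \ref{hipp} gives $s_0\leq k-1-(k-j)=j-1$, i.e.\ $\dim\leq j$. Combined with $\dim\geq j$, this yields $\dim=j$.

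The most delicate step is the Proposition \ref{uni}-style comparison in $(\Rightarrow)$ across the two spaces $\V_{\bfu,\bfv,\bfn,k}$ and $\V_{\bfu,\bfv,\bfn,k-j+1}$: the chosen degrees of $(A^0,B^0)$ and $(A',B')$ are just tight enough that $P$ has degree exactly $n-2$ while accumulating $n$ WHIP-derived zeros, so it must vanish identically, which then unlocks the factorization through the coprime reduced pair $(\bar A,\bar B)$ needed to produce a smaller-$A$-degree element of $\V_{\bfu,\bfv,\bfn,k}$ and hence the contradiction with minimality.
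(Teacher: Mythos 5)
Your proof is correct and follows the same inductive skeleton as the paper: base cases from Propositions~\ref{dim1}--\ref{dim2}, symmetrization via Lemma~\ref{deltas}, Hilbert--Burch vectors spanning one-dimensional kernels, and the dimension count from Theorem~\ref{hipp}; the $(\Leftarrow)$ half is essentially identical to the paper's. The two places where you genuinely diverge are both in the $(\Rightarrow)$ direction. First, for the vanishing list \eqref{Dj} the paper invokes the induction hypothesis negatively (``otherwise $\dim$ would equal some $j_0<j$''), while you argue directly: the $j-s$ shifts $(x^tA^0,x^tB^0)$, $t=0,\dots,j-1-s$, sit in the kernels of both $\M_{k-1-s,n-k+s,\bfn}(\bfu,\bfv)$ and $\M_{k-1+s,n-k-s,\bfn}(\bfu,\bfv)$, so for $s\leq j-2$ both are rank-deficient and every maximal minor dies at once. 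This is more elementary and does not lean on the inductive statement for the vanishings. Second, for the ``or'' nonvanishing the paper notes that the minimal element, having nonzero leading $A$-coefficient $a_{k-j}$, already spans the one-dimensional $\V_{\bfu,\bfv,\bfn,k-j+1}$ and reads off the nonzero minor from Proposition~\ref{dim1}; you instead suppose $\Delta^{\bfn}_{k-j+1,k-j+1}=0$, deduce $\dim_\K\V_{\bfu,\bfv,\bfn,k-j+1}\geq 2$, and run a Proposition~\ref{uni}-style cross-comparison $A^0B'-A'B^0=0$ (degree $\leq n-2$ with $n$ forced zeros) to produce, via the coprime reduction of $(A^0,B^0)$, an element of $\V_{\bfu,\bfv,\bfn,k}$ of strictly smaller $A$-degree, contradicting minimality. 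The paper's route is shorter but silently uses that $(A^0,B^0)$ remains the minimal generator of the shifted space; your route makes that step explicit at the cost of the factorization argument. One small stylistic point: your ``WLOG $\deg A^0=k-j$'' should be flagged as resting on the $k\leftrightarrow n-k+1$ symmetry provided by Lemma~\ref{deltas}, since the degree bound from the companion case produces $\deg P\leq n-1$ rather than $n-2$ (still $<n$, so the argument survives unchanged).
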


\begin{proof}

By induction on $j,$ the initial case $j=2$  following from Proposition \ref{dim2}. Let $j>2$.
Assume that $\dim_\K (\V_{\bfu,\bfv,\bfn,k})=j.$ By the induction hypothesis, we must have
 $\Delta^{\bfn}_{k,k}(\bfu,\bfv)=\dots =\Delta^{\bfn}_{k-j+2,k-j+2}(\bfu,\bfv)=0,$ and $\Delta^{\bfn}_{k,n+1}(\bfu,\bfv)= \dots = \Delta^{\bfn}_{k+j-2,n+1}(\bfu,\bfv) =0,$ as otherwise
 it would be equivalent to $\dim_\K(\V_{\bfu,\bfv,\bfn,k})=j_0<j.$

Let $(a_0,\dots , a_{k-j},0, \dots,0; b_0,\dots , b_{n-k-j+1},0\dots,0)$ be the coefficients of a minimal element of this space. If $a_{k-j}\neq 0,$ then
\[(a_0,\dots , a_{k-j}; b_0,\dots, b_{n-k-j+1},0\dots,0,\dots , 0)\]
encodes the coefficients of a minimal element in $V_{\bfu,\bfv,\bfn,k-(j-1),n}$, a vector space of dimension $1$ and, by
Proposition \ref{dim1},  $\Delta^{\bfn}_{k-(j-1),k-(j-1)}(\bfu,\bfv)\neq 0$ (note that
$\Delta^{\bfn}_{k-(j-1),n+1}(\bfu,\bfv)=\Delta^{\bfn}_{k-(j-2),k-(j-2)}(\bfu,\bfv)=0$). If
$b_{n-k-j+1}\neq 0,$ then
\[(a_0,\dots , a_{k-j},0\dots,0, \dots,0; b_0,\dots, b_{n-k-j+1})\]
 are the coefficients of a  minimal element in $\V_{\bfu,\bfv,\bfn, k+j-1,n},$ and hence
 $\Delta^{\bfn}_{k+j-1,n+1}(\bfu,\bfv)\neq 0$ in this case.

Assume now that
$\Delta^{\bfn}_{k,k}(\bfu,\bfv)=\dots =\Delta^{\bfn}_{k-j+2,k-j+2}(\bfu,\bfv)=0$, $\Delta^{\bfn}_{k,n+1}(\bfu,\bfv)= \dots = \Delta^{\bfn}_{k+j-2,n+1}(\bfu,\bfv) =0$.
By the induction hypothesis, we have that $\dim_\K (\V_{\bfu,\bfv,\bfn,k}) >j-1$. Let $$(a_0,\dots , a_{k-j},0, \dots,0; b_0,\dots , b_{n-k-j+1},0\dots,0)$$ be
the coefficients of a minimal element of $\V_{\bfu,\bfv,\bfn,k}$. If $ \Delta^{\bfn}_{k-j+1,k-j+1}(\bfu,\bfv)\neq 0$, then the element
  \[(a_0,\dots , a_{k-j}; b_0,\dots, b_{n-k-j+1},0\dots,0,\dots , 0)\] encodes a non zero element of the one dimensional
  space $\V_{\bfu,\bfv,\bfn,k-j+1,n}$ which implies that $a_{k-j}\neq 0$ and therefore that $\dim_\K (V_{\bfu,\bfv,\bfn,k})=j$. If
  $\Delta^{\bfn}_{k+j-1,n+1}(\bfu,\bfv)\neq 0$, then
\[(a_0,\dots , a_{k-j}, 0\dots,0,\dots , 0; b_0,\dots, b_{n-k-j+1})\] corresponds to a non trivial element in the one-dimensional vector
space $ \V_{\bfu,\bfv,\bfn,k+j-1,n}$ and so $\dim_\K (\V_{\bfu,\bfv,\bfn,k})=j$ also in this case.

\end{proof}
\smallskip
By using again Lemma  \ref{deltas}, we get the following equivalent result.
\begin{corollary}\label{corimp}
\label{dimjbis}  If $2\leq j\leq m+1$, then
\[
\dim (\V_{\bfu,\bfv,\bfn,k})=j \Leftrightarrow \left\{ \begin{array}{l} \Delta^{\bfn}_{k,k}(\bfu,\bfv)=\dots =\Delta^{\bfn}_{k-j+2,k-j+2}(\bfu,\bfv)=0 \\
\Delta^{\bfn}_{k+1,k+1}(\bfu,\bfv)= \dots = \Delta^{\bfn}_{k+j-1,k+j-1}(\bfu,\bfv) =0, \ \mbox{and} \\ \Delta^{\bfn}_{k-j+1,k-j+1}(\bfu,\bfv)\neq 0 \textrm{ or } \Delta^{\bfn}_{k+j,k+j}(\bfu,\bfv)\neq 0. \end{array} \right.
\]
\end{corollary}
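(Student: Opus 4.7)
The statement is an immediate reformulation of Proposition \ref{imp}, so the plan is simply to exhibit the dictionary that transforms one list of (in)equalities into the other. The key observation is the identity provided by Lemma \ref{deltas}, which asserts that, up to a sign, $\Delta^{\bfn}_{k-1,n+1} = \Delta^{\bfn}_{k,k}$; applying this identity with $k$ replaced by $k+i+1$, we obtain
\[
\Delta^{\bfn}_{k+i,n+1} = \pm\, \Delta^{\bfn}_{k+i+1,k+i+1}
\]
for every integer $i \geq 0$ for which the indices fall in the allowed range. In particular a vanishing (or nonvanishing) of the minor $\Delta^{\bfn}_{k+i,n+1}$ is equivalent to the corresponding property of $\Delta^{\bfn}_{k+i+1,k+i+1}$.

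Applying this substitution term by term to the statement of Proposition \ref{imp} is the entire argument. The block of equalities $\Delta^{\bfn}_{k,n+1} = \dots = \Delta^{\bfn}_{k+j-2,n+1} = 0$ translates into $\Delta^{\bfn}_{k+1,k+1} = \dots = \Delta^{\bfn}_{k+j-1,k+j-1} = 0$, and the disjunctive alternative $\Delta^{\bfn}_{k+j-1,n+1} \neq 0$ translates into $\Delta^{\bfn}_{k+j,k+j} \neq 0$, while the equalities of the form $\Delta^{\bfn}_{k-s,k-s} = 0$ and the alternative $\Delta^{\bfn}_{k-j+1,k-j+1} \neq 0$ are left unchanged.

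The only step that might deserve a sentence of verification is the range check: one needs $k + j \leq n + 1$ so that the index $k+j$ labels an actual column of $\M_{k+j-1,n-k-j+1,\bfn}(\bfU,\bfV)$, and this is guaranteed by the hypothesis $j \leq m + 1 = \min\{k,n-k+1\}$. Once this is noted, the corollary follows, and no genuinely new computation is required beyond Lemma \ref{deltas} and Proposition \ref{imp}. There is no substantive obstacle in this proof; it is a bookkeeping rewrite whose only value is cosmetic symmetry, replacing the asymmetric mixture of ``diagonal'' minors $\Delta^{\bfn}_{k-s,k-s}$ and ``last column'' minors $\Delta^{\bfn}_{k+s,n+1}$ by a single symmetric block of diagonal minors centered at $k$.
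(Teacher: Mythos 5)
Your proof is correct and matches the paper's own argument exactly: the corollary is obtained by applying Lemma \ref{deltas} term by term to Proposition \ref{imp}, which is precisely what the paper does (it simply states ``By using again Lemma \ref{deltas}, we get the following equivalent result''). The range check you include is a sensible addition but the substance is identical.
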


We summarize our results with the following
\begin{theorem}\label{mss}
For $j=1,\ldots m+1,\,V(\cF^{j-2}_{k,\bfn})\subset(\K^l\setminus Z)\times \K^n$ is  given by the equations
\begin{equation}\label{dsp}
\Delta^{\bfn}_{k-j+2,k-j+2}=\Delta^{\bfn}_{k-j+3,k-j+3}=\dots  = \Delta^{\bfn}_{k+j-1,k+j-1} =0.
\end{equation}
The open set  $V(\cF^{j-2}_{k,\bfn})\setminus V(\cF^{j-1}_{k,n,\bfn})$ is defined by cutting the above equations with
$$\{\Delta^{\bfn}_{k-j+1,k-j+1}\neq 0\} \cup\{\Delta^{\bfn}_{k+j,k+j}\neq 0\}.$$
\end{theorem}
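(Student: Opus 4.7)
The plan is to read Theorem~\ref{mss} as a repackaging of Theorem~\ref{bg} and Corollary~\ref{corimp} (plus the base cases Proposition~\ref{dim1}/Corollary~\ref{43} for $j=1$), in which the geometric stratification by the $V(\cF^{j-1}_{k,\bfn})$ is translated into the vanishing/nonvanishing behaviour of the specific ``diagonal'' maximal minors $\Delta^{\bfn}_{i,i}$. First I would invoke Theorem~\ref{bg} together with Lemma~\ref{filt} to rewrite
\[
V(\cF^{j-2}_{k,\bfn}) \;=\; \bigsqcup_{j'\ge j}\bigl(V(\cF^{j'-2}_{k,\bfn})\setminus V(\cF^{j'-1}_{k,\bfn})\bigr)\;=\;\{(\bfu,\bfv):\dim_\K(\V_{\bfu,\bfv,\bfn,k})\ge j\},
\]
and $V(\cF^{j-2}_{k,\bfn})\setminus V(\cF^{j-1}_{k,\bfn}) = \{(\bfu,\bfv):\dim_\K(\V_{\bfu,\bfv,\bfn,k})=j\}$. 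After that, everything reduces to matching these dimension strata against the explicit conditions from Corollary~\ref{corimp}.

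For the first part of the theorem, I would argue two inclusions. For the forward direction, if $(\bfu,\bfv)\in V(\cF^{j-2}_{k,\bfn})$ then $\dim_\K(\V_{\bfu,\bfv,\bfn,k})=j'$ for some $j'\ge j$; Corollary~\ref{corimp} applied to $j'$ tells us that $\Delta^{\bfn}_{k-j'+2,k-j'+2}=\dots=\Delta^{\bfn}_{k+j'-1,k+j'-1}=0$, and since the interval $[k-j+2,\,k+j-1]$ is contained in $[k-j'+2,\,k+j'-1]$, the equations in \eqref{dsp} are satisfied. For the reverse direction, assume the equations in \eqref{dsp} hold and suppose for contradiction that $\dim_\K(\V_{\bfu,\bfv,\bfn,k})=j'<j$. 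Since by Proposition~\ref{trump} one always has $j'\ge 1$, Corollary~\ref{43} (when $j'=1$) or Corollary~\ref{corimp} (when $2\le j'\le j-1$) produces an index $i\in\{k-j'+1,\,k+j'\}$ with $\Delta^{\bfn}_{i,i}(\bfu,\bfv)\neq 0$. But $k-j+2\le k-j'+1$ and $k+j'\le k+j-1$, so $i\in[k-j+2,\,k+j-1]$, contradicting the vanishing of \eqref{dsp}. The boundary case $j=1$ is handled separately and vacuously: the list of equations in \eqref{dsp} is empty, and $V(\cF^{-1}_{k,\bfn})=(\K^l\setminus Z)\times\K^n$ by the convention $\cF^{-1}_{k,\bfn}=\langle 0\rangle$, in agreement with the fact that the dimension is always at least one.

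For the second part, $(\bfu,\bfv)\in V(\cF^{j-2}_{k,\bfn})\setminus V(\cF^{j-1}_{k,\bfn})$ precisely when $\dim_\K(\V_{\bfu,\bfv,\bfn,k})=j$, and Corollary~\ref{corimp} (for $j\ge 2$) or Corollary~\ref{43} (for $j=1$) says that this locus is cut out by \eqref{dsp} together with the extra condition $\{\Delta^{\bfn}_{k-j+1,k-j+1}\neq 0\}\cup\{\Delta^{\bfn}_{k+j,k+j}\neq 0\}$, which is exactly the description in the theorem.

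The hard part in writing this down is not conceptual, since the whole machinery has already been established: it is purely bookkeeping, making sure that for every value of $j'\in\{1,\ldots,m+1\}$ the index intervals $[k-j'+2,\,k+j'-1]$ and the ``next-off-diagonal'' pair $\{k-j'+1,\,k+j'\}$ fit correctly inside or outside $[k-j+2,\,k+j-1]$, and that the small cases $j=1$ and $j=m+1$ are treated without vacuous or out-of-range indices. Once that combinatorial check is carried out the statement follows immediately.
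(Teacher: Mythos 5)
Your proposal is correct and follows the same route the paper takes: the paper also reduces the second part to Theorem~\ref{bg} plus Corollary~\ref{corimp}, and obtains the first part from the disjoint-union decomposition $V(\cF^{j-2}_{k,\bfn})=\bigsqcup_{j'\ge j}\bigl(V(\cF^{j'-2}_{k,\bfn})\setminus V(\cF^{j'-1}_{k,\bfn})\bigr)$. The only thing you add beyond what the paper writes out is the explicit verification of the reverse inclusion (that a point satisfying \eqref{dsp} cannot land in a stratum with $j'<j$, by the nonvanishing clause of Corollary~\ref{corimp}/Corollary~\ref{43}); the paper treats that direction as ``straightforward,'' and your argument is exactly the bookkeeping that makes it so.
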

\begin{proof}
The second part of the claim follows from From Corollary \ref{corimp} and the characterization of
$V(\cF^{j-1}_{k,\bfn})\setminus V(\cF^{j}_{k,\bfn})$  given in Theorem \ref{bg}. The first part follows straightforwardly by noticing that
$$V(\cF^{j-2}_{k,\bfn})=\left(V(\cF^{j-2}_{k,\bfn})\setminus V(\cF^{j-1}_{k,\bfn})\right)\sqcup \left(V(\cF^{j-1}_{k,\bfn})\setminus V(\cF^{j}_{k,n,\bfn})\right)\sqcup\ldots
$$
and applying the first part of the claim to each of these pieces. This concludes with the proof of the Theorem.
\end{proof}

It turns out that this procedure also helps build minimal solutions of the WHIP as follows:

\begin{proposition}\label{ppear}
If $(\bfu,\bfv)\in V(\cF^{j-2}_{k,\bfn})\setminus V(\cF^{j-1}_{k,\bfn})$ then the following expressions are minimal solutions of the WHIP:
\begin{equation}\label{llave}
\left\{
\begin{array}{lcl}
\left(\sum_{\ell=0}^{k-j}\Delta^{\bfn}_{k-j+1,\ell+1}(\bfu,\bfv)x^\ell;\,\sum_{\ell=k-j+1}^{n-2j+2}\Delta^{\bfn}_{k-j+1,\ell+1}(\bfu,\bfv)x^{\ell-k+j-1} \right) \ & \mbox{if} & \Delta^{\bfn}_{k-j+1,k-j+1}(\bfu,\bfv)\neq0, \\
\left(\sum_{\ell=0}^{k-j}\Delta^{\bfn}_{k+j-1,\ell+1}(\bfu,\bfv)x^\ell;\,\sum_{\ell=k+j-1}^{n}\Delta^{\bfn}_{k+j-1,\ell+1}(\bfu,\bfv)x^{\ell-k-j+1} \right) \ & \mbox{if} & \Delta^{\bfn}_{k+j,k+j}(\bfu,\bfv)\neq0.
\end{array}\right.
\end{equation}
If $\Delta^{\bfn}_{k-j+1,k-j+1}(\bfu,\bfv)=0$ (resp. $\Delta^{\bfn}_{k+j,k+j}(\bfu,\bfv)=0$), the first (resp. second) vector in \eqref{llave} vanishes identically.
\end{proposition}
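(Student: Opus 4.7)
The plan is to derive both pairs in \eqref{llave} from Cramer's rule applied to carefully chosen $n\times(n+1)$ matrices, and then to use a rank inequality to kill the ``extra'' minors so that the resulting signed minor vector actually lies in the kernel of a smaller matrix whose kernel is one-dimensional.

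Let $M^{\mathrm{small}}:=\M_{k-j,n-k-j+1,\bfn}(\bfu,\bfv)$, an $n\times(n-2j+3)$ matrix. Corollary \ref{ox}, applied with $j$ replaced by $j-1$, translates the hypothesis $(\bfu,\bfv)\in V(\cF^{j-2}_{k,\bfn})\setminus V(\cF^{j-1}_{k,\bfn})$ into $\dim_\K\ker M^{\mathrm{small}}=1$, hence $\mathrm{rank}(M^{\mathrm{small}})=n-2j+2$, and any nonzero element of this kernel encodes a minimal element of $\V_{\bfu,\bfv,\bfn,k}$. For the first pair in \eqref{llave} I would embed $M^{\mathrm{small}}$ as the first $n-2j+3$ columns of $M^{\mathrm{big}}:=\M_{k-j,n-k+j-1,\bfn}(\bfu,\bfv)$, whose remaining $2j-2$ columns correspond to higher-degree $b$-coefficients. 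Cramer's rule (Laplace expansion along any row) then places the vector $(\Delta^{\bfn}_{k-j+1,1},\ldots,\Delta^{\bfn}_{k-j+1,n+1})$ of signed maximal minors of $M^{\mathrm{big}}$ inside $\ker M^{\mathrm{big}}$.

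The key step is to show that the last $2j-2$ entries of this vector vanish: removing any column with index $n-2j+4\leq i\leq n+1$ from $M^{\mathrm{big}}$ yields an $n\times n$ submatrix that contains all of $M^{\mathrm{small}}$ together with only $2j-3$ of the extra columns, so its rank is bounded by $(n-2j+2)+(2j-3)=n-1$ and its determinant must vanish. With these trailing entries killed, the Cramer identity collapses to $\sum_{i=1}^{n-2j+3}\Delta^{\bfn}_{k-j+1,i}\,(M^{\mathrm{small}})_{*,i}=0$, which is exactly the assertion that the first pair in \eqref{llave} lies in $\ker M^{\mathrm{small}}$. Whenever $\Delta^{\bfn}_{k-j+1,k-j+1}(\bfu,\bfv)\neq 0$ this vector is nontrivial, and by the one-dimensionality of the kernel it is a scalar multiple of a minimal solution.

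The second pair in \eqref{llave} is handled by the symmetric argument, replacing $M^{\mathrm{big}}$ with $M^{\mathrm{big}'}:=\M_{k+j-2,n-k-j+1,\bfn}(\bfu,\bfv)$, whose $2j-2$ extra columns are the middle $A$-columns (for $a_{k-j+1},\ldots,a_{k+j-2}$); the identical rank estimate forces $\Delta^{\bfn}_{k+j-1,i}(\bfu,\bfv)=0$ for $i=k-j+2,\ldots,k+j-1$, and non-triviality under $\Delta^{\bfn}_{k+j,k+j}(\bfu,\bfv)\neq 0$ is read off via Lemma \ref{deltas}. Finally, for the vanishing claim, if $\Delta^{\bfn}_{k-j+1,k-j+1}(\bfu,\bfv)=0$ then Corollary \ref{corimp} forces $\deg A^0<k-j$ and $\deg B^0=n-k-j+1$, so for $j\geq 2$ both $(A^0,B^0)$ and $(xA^0,xB^0)$ respect the degree bounds of $\ker M^{\mathrm{big}}$; consequently $\dim_\K\ker M^{\mathrm{big}}\geq 2$, every maximal minor of $M^{\mathrm{big}}$ vanishes, and the first vector in \eqref{llave} is identically zero, with the analogous shift on $M^{\mathrm{big}'}$ taking care of the second case. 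I expect the main difficulty to be not conceptual but a careful bookkeeping exercise, keeping the three matrices $M^{\mathrm{small}}, M^{\mathrm{big}}, M^{\mathrm{big}'}$ straight and matching column indices to polynomial coefficients without error.
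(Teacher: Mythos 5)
Your proof is correct and takes a genuinely different route from the paper's. Both proofs start identically: the signed maximal minors of $\M_{\bfu,\bfv,\bfn,k-j+1}$ give a Cramer vector in $\ker\M_{\bfu,\bfv,\bfn,k-j+1}$, and one must then show the trailing denominator coefficients (positions $n-2j+4,\ldots,n+1$) actually vanish so that the truncated vector falls into the one-dimensional kernel of $M^{\mathrm{small}}=\M_{k-j,n-k-j+1,\bfn}(\bfu,\bfv)$. The paper accomplishes this by a degree-chasing argument: using Lemma \ref{deltas} together with \eqref{dsp} it shows only that the \emph{top} coefficient $\Delta^{\bfn}_{k-j+1,n+1}$ vanishes, and then invokes Corollary \ref{43} and the chain of equations \eqref{dsp} to bound the degree $d_0$ of the denominator by $n-k-j+1$. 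Your argument instead kills \emph{all} $2j-2$ trailing minors at once by a rank-subadditivity estimate: each of those $n\times n$ submatrices of $M^{\mathrm{big}}$ contains all $n-2j+3$ columns of $M^{\mathrm{small}}$ (which already has full rank $n-2j+2$) plus only $2j-3$ extra columns, so its rank is at most $n-1$ and the determinant is zero. This is more direct, bypasses Theorem \ref{mss} and Corollary \ref{43} entirely, and has the virtue of identifying exactly which minors vanish rather than just bounding a degree. Your treatment of the vanishing claim via the two linearly independent kernel elements $(A^0,B^0)$ and $(xA^0,xB^0)$ is essentially the same contradiction idea as the paper's, phrased as a lower bound on $\dim\ker M^{\mathrm{big}}$; your explicit restriction to $j\geq 2$ there is appropriate and, in fact, the paper's own argument for the vanishing part implicitly requires $j\geq 2$ as well (it needs $\Delta^{\bfn}_{k-j+2,k-j+2}$ to appear among the equations \eqref{dsp}, which is an empty list when $j=1$). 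Two small points worth tightening in a final write-up: (i) when you say Corollary \ref{corimp} ``forces $\deg A^0<k-j$,'' you are really invoking the contrapositive of a step in the \emph{proof} of Proposition \ref{imp} (``if $a_{k-j}\neq 0$ then $\Delta^{\bfn}_{k-j+1,k-j+1}\neq 0$''), not the bare statement of the corollary; and (ii) it is worth noting explicitly that the nested-column structure (the $B$-columns of $M^{\mathrm{small}}$ being literally a prefix of those of $M^{\mathrm{big}}$, and the $A$-columns of $M^{\mathrm{small}}$ being a prefix of those of $M^{\mathrm{big}'}$) holds because the column for a coefficient $a_s$ or $b_s$ depends on $s$ alone, not on $\alpha$ or $\beta$, which is what makes the rank estimate apply.
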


\begin{remark}
From the previous claim we deduce that, as both vectors in \eqref{llave} are minimal solutions of the same WHIP, up to a non-zero constant they must coincide
in $\{\Delta^{\bfn}_{k-j+1,k-j+1}\neq0\}\cap\{\Delta^{\bfn}_{k+j,k+j}\neq0\}.$
\end{remark}
\begin{proof}[Proof of Proposition \ref{ppear}]
If $\Delta^{\bfn}_{k-j+1,k-j+1}(\bfu,\bfv)\neq0$ then, by computing the maximal minors of the matrix $\M_{\bfu,\bfv,\bfn,k-j+1},$ we deduce that
\begin{equation}\label{abobe}
\left(\sum_{\ell=0}^{k-j}\Delta^{\bfn}_{k-j+1,\ell+1}(\bfu,\bfv)x^\ell;\,\sum_{\ell=k-j+1}^{n}\Delta^{\bfn}_{k-j+1,\ell+1}(\bfu,\bfv)x^{\ell-k+j-1} \right)
\end{equation}
is -up to a constant- a minimal solution of the WHIP with parameters $(k-j+1,n)$ as the kernel of this matrix has dimension one, and hence all solutions must
be multiples of \eqref{abobe}.
As $\Delta^{\bfn}_{k-j+1,n+1}(\bfu,\bfv)=\pm\Delta^{\bfn}_{k-j+2,k-j+2}(\bfu,\bfv)$ thanks to Lemma \ref{deltas}, and the last expression equal to zero due to
Theorem \ref{mss}, we deduce that the second coordinate of \eqref{abobe} has degree $d_0<n-k+j-1.$  Note that  \eqref{abobe} is also a minimal solution of the WHIP
with parameters $(n-d_0,n)$ (if it were not minimal, there would be one of smaller degree which would contradict the minimality of \eqref{abobe} as a solution of
the WHIP with parameters $(k-j+1,n).$ Due to Corollary \ref{43}, we deduce then that either
$\Delta^{\bfn}_{n-d_0,n-d_0}(\bfu,\bfv)\neq0$ or $\Delta^{\bfn}_{n-d_0+1,n-d_0+1}(\bfu,\bfv)\neq0.$ From \eqref{dsp}, we must have that
either $n-d_0\leq k-j+1$ or $n-d_0+1\geq k+j.$ The first one cannot happen as we have $d_0<n-k+j-1$ above, so it should be $d_0\leq n-k-j+1,$ which shows
that \eqref{abobe} is a solution of the WHIP with parameters $(k,n)$ (as the denominator has degree smaller than $n-k$), and moreover, we actually
have $$\sum_{\ell=k-j+1}^{n}\Delta^{\bfn}_{k-j+1,\ell+1}(\bfu,\bfv)x^{\ell-k+j-1}=\sum_{\ell=k-j+1}^{n-2j+2}\Delta^{\bfn}_{k-j+1,\ell+1}(\bfu,\bfv)x^{\ell-k+j-1},$$
so the first part of the claim follows for  $\Delta^{\bfn}_{k-j+1,k-j+1}(\bfu,\bfv)\neq0.$ A similar argument show that \eqref{llave} also holds when  $\Delta^{\bfn}_{k+j,k+j}(\bfu,\bfv)\neq0.$
\par For the second part, suppose that $\Delta^{\bfn}_{k-j+1,k-j+1}(\bfu,\bfv)=0.$ As we also have
$\Delta^{\bfn}_{k-j+1,n+1}(\bfu,\bfv)=\pm\Delta^{\bfn}_{k-j+2,k-j+2}(\bfu,\bfv)=0$ (due to Lemma \ref{deltas} and Theorem \ref{mss}), we claim that
all the maximal minors of $\M_{\bfu,\bfv,\bfn,k-j+1}$ vanish identically, and hence \eqref{abobe} is the zero vector. To see this, if
there is a non trivial minor of this matrix, then \eqref{abobe} would compute a non trivial solution of the kernel of $\M_{\bfu,\bfv,\bfn,k-j+1}$ which has
both leading coefficients (numerator and denominator) vanish. So, by multiplying by a polynomial of degree $1$ the two
polynomials in \eqref{abobe},  we would obtain another vector in $\mbox{\rm ker}\big(\M_{\bfu,\bfv,\bfn,k-j+1}\big)$ linearly independent with it. This implies
that the dimension of this kernel is at least two and hence all the maximal minors of the matrix vanish, a contradiction which concludes with the proof of
the Proposition.
\end{proof}
As in Theorem \ref{bp}, we set $\cB_{2j-1}$ to be the set of unattainable points of the RHIP lying in $V(\cF^{j-2}_{k,\bfn})\setminus V(\cF^{j-1}_{k,\bfn}).$  The
following result gives equations for this set.
\begin{theorem}\label{finn}
With notation as in the statement of Theorem \ref{mtt}, for $1\leq j\leq m,\, \cB_{2j-1}$ is a union of $n$  components, each of them being defined, for a
fixed $i\in\{1,\ldots, l\},$  by cutting the $2(j-1)$ equations from \eqref{dsp} with
$$
\left\{\begin{array}{lcl}
\sum_{\ell=k-j+1}^{n-2j+2}\Delta^{\bfn}_{k-j+1,\ell+1}\,U_i^{\ell-k-j+1}=0& \mbox{if} & \Delta^{\bfn}_{k-j+1,k-j+1}\neq 0\\
\sum_{\ell=k+j-1}^{n}\Delta^{\bfn}_{k+j-1,\ell+1}\,U_i^{\ell-k-j+1}=0& \mbox{if} & \Delta^{\bfn}_{k+j,k+j}\neq 0.
\end{array}\right.
$$
Up to a nonzero constant, these two polynomials coincide in the intersection
of  \eqref{dsp} with $\{\Delta^{\bfn}_{k-j+1,k-j+1}\neq 0\}\cap\{\Delta^{\bfn}_{k+j,k+j}\neq 0\}.$
\end{theorem}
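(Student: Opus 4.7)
The plan is to combine the explicit formulas for minimal solutions from Proposition \ref{ppear} with the gcd-criterion for solvability given by Theorem \ref{hipp}. I would fix $(\bfu,\bfv)\in V(\cF^{j-2}_{k,\bfn})\setminus V(\cF^{j-1}_{k,\bfn})$ and let $(A^0,B^0)$ denote a minimal element of $\V_{\bfu,\bfv,\bfn,k}$. By Proposition \ref{ppear}, on the open set $\{\Delta^{\bfn}_{k-j+1,k-j+1}\neq 0\}$ the pair $(A^0,B^0)$ coincides, up to a non-zero scalar, with the first line of \eqref{llave}, and on $\{\Delta^{\bfn}_{k+j,k+j}\neq 0\}$ with the second. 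Theorem \ref{hipp} then says that $(\bfu,\bfv)$ is unattainable precisely when $\gcd(A^0,B^0)\neq 1$.

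The next step is to convert the gcd-condition into the vanishing of a single polynomial at one of the points $u_i$. Inspecting the proof of Proposition \ref{2.5}, one sees that any common factor of a minimal solution must be of the form $\prod_{i\in I}(x-u_i)^{n_i-j_i}$ for some $I\subseteq\{1,\ldots,l\}$; hence $\gcd(A^0,B^0)\neq 1$ if and only if there exists $i\in\{1,\ldots,l\}$ with $A^0(u_i)=B^0(u_i)=0$. Evaluating the WHIP \eqref{whermite} at $j=0$ gives $A^0(u_i)=v_{i,0}\,B^0(u_i)$, so this double-vanishing collapses to the single condition $B^0(u_i)=0$. Substituting the formula for $B^0$ from \eqref{llave} and replacing $u_i$ by the indeterminate $U_i$ yields exactly the two equations stated in the theorem, depending on whether one works in $\{\Delta^{\bfn}_{k-j+1,k-j+1}\neq 0\}$ or $\{\Delta^{\bfn}_{k+j,k+j}\neq 0\}$. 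Letting $i$ range through $\{1,\ldots,l\}$ produces the $l$ components of $\cB_{2j-1}$ predicted by Theorem \ref{bp}.

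The compatibility of the two equations on the intersection $\{\Delta^{\bfn}_{k-j+1,k-j+1}\neq 0\}\cap\{\Delta^{\bfn}_{k+j,k+j}\neq 0\}$ then follows directly from Lemma \ref{genn}: both pairs displayed in \eqref{llave} are minimal elements of the same vector space $\V_{\bfu,\bfv,\bfn,k}$, hence differ by a non-zero scalar, and so do their second coordinates evaluated at $u_i$. The main obstacle I anticipate is the careful bookkeeping in the middle paragraph, namely the assertion that $\gcd(A^0,B^0)$ can only absorb the points $u_i$ and nothing else; this is not encoded directly in Theorem \ref{hipp}, but it falls out of the induction in the proof of Proposition \ref{2.5}, which identifies the gcd explicitly as $\prod_{i\in I}(x-u_i)^{n_i-j_i}$. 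Once that identification is invoked, the rest of the argument reduces to substituting and reading off the formulas from \eqref{llave}.
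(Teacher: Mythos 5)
Your proposal is correct and follows essentially the same route as the paper's proof: translate unattainability into the gcd criterion of Theorem \ref{hipp}, observe (via the structure of $\tilde{C}$ in the proof of Proposition \ref{2.5} and the $j=0$ case of \eqref{whermite}) that this reduces to $B^0(u_i)=0$ for some $i$, and then substitute the explicit formulas for $B^0$ from \eqref{llave}. You supply a bit more detail than the paper on why the gcd can only be supported at the $u_i$'s and why the double vanishing collapses to the single condition on the denominator, and you correctly read the number of components as $l$ (matching Theorems \ref{mtt} and \ref{bp}) rather than the ``$n$'' that appears in the statement of Theorem \ref{finn}, which is a typo.
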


\begin{proof}
Recall from the proof of Theorem \ref{bp} that $\cB_{2j-1}$ is the set of unattainable points of the RHIP lying in
$V(\cF^{j-2}_{k,\bfn})\setminus V(\cF^{j-1}_{k,\bfn}).$  Theorem \ref{mss} gives the description of $V(\cF^{j-2}_{k,\bfn})\setminus V(\cF^{j-1}_{k,\bfn}),$
the unattainable points of the RHIP are those where the denominator vanishes after setting $x\mapsto u_i$ for some $i\in\{1,\ldots, l\},$ which means
that -thanks to \eqref{llave}- we need to add the equation
$\sum_{\ell=k-j+1}^{n-2j+2}\Delta^{\bfn}_{k-j+1,\ell+1}\,U_i^{\ell-k-j+1}=0$ for $\Delta^{\bfn}_{k-j+1,k-j+1}\neq 0,$ or
$\sum_{\ell=k+j-1}^{n}\Delta^{\bfn}_{k+j-1,\ell+1}\,U_i^{\ell-k-j+1}=0$ for $\Delta^{\bfn}_{k+j,k+j}\neq 0.$ This concludes with the proof of the claim.
\end{proof}

\bibliographystyle{alpha}
\def\cprime{$'$} \def\cprime{$'$} \def\cprime{$'$}

\end{document}